\long\def\symbolfootnote[#1]#2{\begingroup%
\def\thefootnote{\fnsymbol{footnote}}\footnote[#1]{#2}\endgroup}
\def\Xint#1{\mathchoice
{\XXint\displaystyle\textstyle{#1}}%
{\XXint\textstyle\scriptstyle{#1}}%
{\XXint\scriptstyle\scriptscriptstyle{#1}}%
{\XXint\scriptscriptstyle\scriptscriptstyle{#1}}%
\!\int}
\def\XXint#1#2#3{{\setbox0=\hbox{$#1{#2#3}{\int}$ }
\vcenter{\hbox{$#2#3$ }}\kern-.6\wd0}}
\def\dashint{\Xint-}
\DeclareMathOperator{\dist}{dist}
\DeclareMathOperator{\diam}{diam}
\def \harm{\mathcal{H}}
\def \wharm{w\mathcal{H}}
\def \subh{\mathcal{S_{-}H}}
\def \superh{\mathcal{S^{+}H}}
\def \muballxr{\mu(B(x,r))}
\def \muball{\mu(B)}
\def\vint{\mathop{\mathchoice%
          {\setbox0\hbox{$\displaystyle\intop$}\kern 0.22\wd0%
           \vcenter{\hrule width 0.6\wd0}\kern -0.82\wd0}%
          {\setbox0\hbox{$\textstyle\intop$}\kern 0.2\wd0%
           \vcenter{\hrule width 0.6\wd0}\kern -0.8\wd0}%
          {\setbox0\hbox{$\scriptstyle\intop$}\kern 0.2\wd0%
           \vcenter{\hrule width 0.6\wd0}\kern -0.8\wd0}%
          {\setbox0\hbox{$\scriptscriptstyle\intop$}\kern 0.2\wd0%
           \vcenter{\hrule width 0.6\wd0}\kern -0.8\wd0}}%
          \mathopen{}\int}
\newcommand{\notimplies}{%
  \mathrel{{\ooalign{\hidewidth$\not\phantom{=}$\hidewidth\cr$\implies$}}}}
\newcommand{\N}{{\mathbb N}}
\newcommand{\R}{{\mathbb R}}
\newcommand{\ep}{\epsilon}
\newcommand{\Om}{\Omega}
\newcommand{\kom}[1]{}
\renewcommand{\kom}[1]{{\bf [#1]}}
\definecolor{blau}{rgb}{0.1,0.0,0.9}
\newcounter{komcounter}
\numberwithin{komcounter}{section}
\theoremstyle{plain}
\newtheorem{theorem}{Theorem}[section]
\newtheorem{prop}{Proposition}[section]
\newtheorem{lemma}{Lemma}[section]
\newtheorem{corol}{Corollary}[section]
\theoremstyle{definition}
\newtheorem{definition}{Definition}[section]
\newtheorem{example}{Example}
\newtheorem{remark}{\textnormal{\textbf{Remark}}}
\theoremstyle{definition}
\begin{document}

\title {Harmonic functions on metric measure spaces}

\author{
Tomasz Adamowicz{\small{$^1$}}
\\
\it\small Institute of Mathematics, Polish Academy of Sciences \\
\it\small ul. \'Sniadeckich 8, 00-656 Warsaw, Poland\/{\rm ;}
\it\small T.Adamowicz@impan.pl
\\
\\
Micha{\l} Gaczkowski, Przemys{\l}aw G\'{o}rka{\small{$^1$}}
\\
\it\small Department of Mathematics and Information Sciences,
\it\small Warsaw University of Technology,\\
\it\small Ul. Koszykowa 75, 00-662 Warsaw, Poland\/{\rm ;}
\it\small M.Gaczkowski@mini.pw.edu.pl, P.Gorka@mini.pw.edu.pl
}

\date{}
\maketitle

\footnotetext[1]{T. Adamowicz and P. G\'{o}rka were supported by a grant Iuventus Plus of the Ministry of Science and Higher Education of the Republic of Poland, Nr 0009/IP3/2015/73.}

\begin{abstract}
 We introduce and study strongly and weakly harmonic functions on metric measure spaces defined via the mean value property holding for all and, respectively, for some radii of balls at every point of the underlying domain. Among properties of such functions we investigate various types of Harnack estimates on balls and compact sets, weak and strong maximum principles, comparison principles, the H\"older and the Lipshitz estimates and some differentiability properties. The latter one is based on the notion of a weak upper gradient. The Dirichlet problem for functions satisfying the mean value property is studied via the dynamical programming method related to stochastic games. We employ the Perron method to construct a harmonic function with continuous boundary data. Finally, we discuss and prove the Liouville type theorems.

 Our results are obtained for various types of measures: continuous with respect to a metric, doubling, uniform, measures satisfying the annular decay condition. Relations between such measures are presented as well. The presentation is illustrated by examples.
 \newline
\newline \emph{Keywords}: Dirichlet problem, doubling measure, dynamical programming, harmonic function, Harnack estimate, H\"older continuity, Liouville theorem, Lipschitz continuity, mean value property, measure, metric analysis, Perron method, potential theory, uniform measure, weak upper gradient.
\newline
\newline
\emph{Mathematics Subject Classification (2010):} Primary: 31C05; Secondary: 30L99.
\end{abstract}

\section{Introduction}

 Harmonic functions and the related Dirichlet problem are one of the most classical and fundamental subjects of studies in mathematical analysis and theory of PDEs. One studies harmonic functions and their generalizations in various settings and contexts, for instance in the Euclidean domains, on manifolds, in the setting of trees, also in Banach spaces. Recent two decades have been the period of an intensive development of yet another area of mathematics, the analysis on metric measure spaces. Its studies bring new approaches and sheds new light also on the theory of harmonic functions. The results due to Cheeger~\cite{cheeg}, Haj\l asz~\cite{haj}, Heinonen--Koskela~\cite{hk} and Shanmugalingam~\cite{sh}, to mention just few mathematicians contributing to the growth of the analysis on metric spaces, laid foundations for the first order Calculus and notions of gradient in metric spaces. See, for instance,  a survey by Heinonen~\cite{he07} for the panorama of the area and further references. Basing on the notion of the weak upper gradient one can study the minima of the Dirichlet energy obtaining counterparts of $p$-harmonic functions and mappings in the metric setting with the harmonic case corresponding to $p=2$, see e.g. Shanmugalingam~\cite{Sh-harm}. Related is an approach based on the Cheeger derivative and a metric counterpart of  the tangent space, see \cite{cheeg}.

 In this work we present another approach to harmonicity on metric measure spaces based on functions which satisfy the mean value property for all balls centered at the points of the given open set and contained in this set. Harmonic functions of such kind were introduced by Gaczkowski and G\'orka in~\cite{GG}. Namely, in \cite{GG} the authors study locally integrable real-valued functions defined on an open subset $\Om\subset X$ of a metric measure space
 $(X, d, \mu)$ requiring that the mean value property for $f$ holds at every $x\in \Om$ and for all balls $B(x,r)\Subset \Om$:
 \[
  f(x)=\frac{1}{\muballxr}\int_{B(x,r)} f(z) d\mu(z).
  \]
 In this work we call such functions \emph{strongly harmonic}, see Definition~\ref{defn-harm}. We continue investigations of their properties, as well as introduce the so-called \emph{weakly harmonic} functions which are required to satisfy the mean value property only for at least one admissible radius at every point of an open set. Our definition is motivated by the classical and subtle investigations in the Euclidean setting due to e.g. Koebe, Volterra and Kellogg, Hansen and Nadirashvili, and Blaschke, Privaloff and Zaremba. We refer to Section~\ref{Sec-harm} for a brief historical sketch of the studies on the size of the set of admissible radii sufficient to imply the harmonicity.

 In Preliminaries we introduce and recall some basic definitions of the metric analysis. In particular, we define continuity of a measure with respect to a metric, see Definition~\ref{def-m-cont-m}. Such a property has been important in the previous studies of harmonic functions, see \cite{GG} (also \cite{gork}). Moreover, we study some properties of a measure implying its continuity with respect to the given metric and notice that this condition gives us wide class of metric measure spaces. It turns out, for instance, that doubling measures in geodesic spaces have this property, see Proposition~\ref{prop-dbl}. Our studies involve various other types of measures, e.g. uniform measures and measures satisfying $\delta$-annular decay condition for some $\delta\in(0,1]$. However, measures continuous with respect to a distance appear to be the most general among the aforementioned measures (see the discussion and the diagram following Proposition~\ref{prop-dbl} in Preliminaries).

 In Section~\ref{Sec-harm} we bring on stage main characters of the paper, i.e. strongly and weakly harmonic functions, motivate their definitions and introduce some of their basic properties and natural relatives such as sub- and superharmonic functions. The latter two notions will play a vital role in the studies of the Dirichlet problem and the Perron method in Section~\ref{Section5}. Furthermore, we study how to generate new sub- and superharmonic functions from the existing ones.

 The key geometric and regularity properties of harmonic functions are presented and studied in Sections~\ref{sect41} and~\ref{sect4}. We identify conditions implying continuity of strongly and weakly harmonic functions as well as we discuss various Harnack inequalities on balls and compact sets. This allows us to obtain important tools of the potential and geometric function theories, namely the weak and strong maximum principles, and the comparison principle. While for strongly harmonic functions such properties could be expected, the fact that they are also available in the setting of weakly harmonic functions might be surprising. Having established the aforementioned properties of harmonic functions, we show one of the main results of Section~\ref{sect41}, namely the local H\"older continuity of strongly and weakly harmonic functions (i.e. H\"older continuity on compact subsets of an underlying domain), see Theorem~\ref{thm-holder}. For strongly harmonic functions we prove this result for geodesic doubling measure spaces with the H\"older exponent depending on the doubling constant only, whereas for weakly harmonic functions we, additionally, require that a compact set $K$ remains enough away from the boundary of the domain and the admissible radii for points in $K$ are uniformly separated from zero and uniformly bounded from the above. The final topic studied in Section~\ref{sect41} is the H\"older and Lipschitz regularity of harmonic functions in spaces satisfying the so-called $\delta$-annular decay property for some $\delta\in(0,1]$ with the Lipschitz case corresponding to $\delta=1$, see e.g. Buckley~\cite{buc}. Roughly speaking, such a property relates the measure of an annular ring to its thinness, cf. Definition~\ref{defn-an-dec}. Moreover, it turns out that length doubling metric spaces have an annular decay property for some $\delta$ while a space with measure continuous with respect to a distance possesses $1$-annular decay property. In Theorem~\ref{thm-ann-dec} we provide the H\"older and Lipschitz estimates on balls and compact sets. By imposing stronger assumptions on measure than in Theorem~\ref{thm-holder} we are able to obtain finer estimates on balls already in the H\"older case, while on compact subsets we not only show the H\"older regularity as in Theorem~\ref{thm-holder} but also provide estimates with explicit constants and exponent $\delta$. In the Lipschitz case not covered by Theorem~\ref{thm-holder} we also have explicit constants, however our estimates depend additionally on a Lebesgue number of a choosen covering.

 We continue studies of the regularity properties of harmonic functions in Section~\ref{sect4}. There, we prove Lipschitz estimates in large scale, i.e. under assumptions that points are enough far away from each other. Then, we study uniform measures, i.e. such measures $\mu$ that every ball $B$ of radius $r>0$ satisfies
 \[
  \mu(B)=C\,r^Q,\quad \hbox{for given } C>0 \hbox{ and } Q\geq 1.
 \]
 Uniform measures appear in geometric measure theory, for example in relation to the Marstrand theorem, in the studies of rectifiable measures and in the theory of incompressible flows in PDEs, see the discussion following Definition~\ref{def-uni-m}. Proposition~\ref{prop-unif-meas} shows that in spaces with uniform measures strongly and weakly harmonic functions are locally Lipschitz. Moreover, we compute Lipschitz constants more accurately than in Theorem~\ref{thm-ann-dec}. In particular we avoid using a Lebesgue number of a covering. These observations allow us to complete the presentation in Sections~\ref{sect41} and~\ref{sect4} with differentiability results based on Cheeger's work~\cite{cheeg}. In Corollaries~\ref{cor1-cheeger-style} and~\ref{cor2-cheeger-style} we describe conditions on metric measure spaces implying that strongly and weakly harmonic functions have weak upper gradients. The $(1,p)$-Poincar\'e inequality plays a crucial role for such results to hold.

 Section~\ref{Section5} is entirely devoted to studying the Dirichlet problem for harmonic functions and related Perron method. We address the following fundamental problems: whether there exists a function with given boundary data satisfying the mean value property inside the domain and whether it is unique, and if so how to construct such a function? In order to solve the first problem we take an approach based on the dynamical programming principle. The idea of this method originates from the stochastic games, especially tug-of-war games and related $p$-harmonious functions (see e.g.  Manfredi, Parviainen and Rossi~\cite{mpr}), and is based on setting up an integral operator, iterating it and proving that such an iteration process converges to a function. We adapt method by Luiro, Parviainen and Saksman~\cite{lps}, recently developed for the Euclidean domains, in the metric setting. According to our best knowledge such an approach to the Dirichlet problem on metric spaces is new in the literature. In Theorem~\ref{eps-exist-meas} we show that given a domain and a measurable boundary data one obtains a function which satisfies the mean value property with respect to exactly one radius at every point of the domain, provided that this point is enough far away from the boundary. Moreover, such a solution satisfies the boundary data condition. Furthermore, Theorem~\ref{eps-exist-cont} extends the previous result to the setting of continuous boundary data. We also prove that if a Dirichlet problem has a continuous subharmonic solution, then it has the weakly harmonic solution with the same continuous boundary data, see Theorem~\ref{sub-Dir}. The Dirichlet problem for strongly harmonic functions is solved via Perron method in Theorems~\ref{glowne} and~\ref{glowne-2}. There, we not only solve the boundary value problem for continuous data, but also show the relation between existence of barriers and regularity of boundary points. Similar relations are well-known e.g. in the setting of Newtonian harmonic functions, see the discussion following Definition~\ref{def-barr}.

 In the final section of the paper we discuss another fundamental geometric properties of harmonic functions, namely the Liouville type theorems. In Theorem~\ref{theoremLP} we provide a fairly general condition for a measure which implies that a strongly(weakly) bounded harmonic function defined in the whole space must be constant. Furthermore, we discuss some sufficient conditions on measure to guarantee that the hypotheses of Theorem~\ref{theoremLP} is satisfied. In particular, this is the case if the measure of the space is finite or in the length spaces with a doubling measure. Our discussion is illustrated with examples. We, for instance, show that even in a simple case of $\R$ there exist non-Lebesgue measures for which bounded entire harmonic functions need not be constant.

 \section{Preliminaries}

Let $(X, d, \mu)$ be a metric measure space equipped with a metric $d$ and measure $\mu$. A ball in space $X$ is denoted by $B:=B(x, r)$ for $x\in X$ and a radius $r>0$. In what follows we will assume that $\mu$ is a Borel regular measure with $0<\mu(B)<\infty$ for any ball $B\subset X$. Moreover, we assume that $X$ is proper, that is closed bounded subsets of $X$ are compact.

We say that a measure $\mu$ is \emph{doubling} if there is a constant $C_\mu>0$
such that for all balls $B=B(x,r)=\{y \in X : d(x,y)<r\}$,
\[
   \mu(2B)\le C_\mu \mu(B),
\]
where $2B(x,r)=B(x,2r)$. If $\mu$ is doubling, then $X$ is
complete if and only if it is proper (i.e.\ every closed bounded set
is compact), see Proposition~3.1 in Bj\"orn--Bj\"orn~\cite{bb}.

One of the consequences of doubling property of $\mu$ is that there exist $C, Q>0$ such that for all $x\in X$, $0<r\le R$ and $y\in B(x,R)$,
\begin{equation}\label{dbl-conseq}
  \frac{\mu(B(y,r))}{\mu(B(x,R))} \ge  \frac{1}{C} \left(\frac{r}{R}\right)^Q.
\end{equation}
In fact, $Q=\log_2 C_\mu$ and $C=C_\mu^2$ will do, see Lemma~3.3 in Bj\"orn--Bj\"orn~\cite{bb},
but there may exist a better, that is smaller, exponent $Q$. Moreover, we note that \eqref{dbl-conseq} implies that
$\mu$ is doubling, i.e.~$\mu$ is doubling if and only if there is an exponent $Q$ such that \eqref{dbl-conseq} holds.

  Furthermore, in what follows we will often appeal, without mentioning it explicitly, to the following property of doubling measures. If $(X, d, \mu)$ is a doubling metric measure space and $\Om\subset X$ is bounded with $\mu(\Om)>0$ (e.g. $\Om$ is a domain), then for any $x\in \Om$ and $0<r<\diam \Om$ it holds that
\[
  \frac{\mu(B(x,r))}{\mu(\Om)} \ge  \frac{1}{C} \left(\frac{r}{\diam \Om}\right)^{\log_2 C_\mu}.
\]

We say that $X$ is \emph{Ahlfors $Q$-regular} if there is a constant $C$
such that
\[
   \frac{1}{C} r^{Q} \le \mu(B(x,r)) \le C r^{Q}
\]
for all balls $B(x,r) \subset X$ with $r < 2 \diam X$. (In this case, the optimal choice for $q$
in \eqref{dbl-conseq} is to let $q=Q$). If we only require the left hand side of the inequality to hold, then we say that $X$ is \emph{lower $Q$-Ahlfors regular}.

One of the important properties of the metric spaces considered in the paper is the following relation between the metric and the measure.

Recall that $A \Delta B$ stands for a symmetric difference of sets $A, B\subset X$ and is defined as follows:
\[
 A \Delta B:= (A\setminus B) \cup (B\setminus A).
\]
\begin{definition}[cf. Definition 2.2 in \cite{GG}]\label{def-m-cont-m}
 Let $(X, d, \mu)$ be a metric measure space. We say that a measure $\mu$ \emph{is continuous with respect to metric $d$} if for all $x\in X$ and all $r>0$ it holds that
 \begin{equation}\label{m-cont-m}
  \lim_{\Om \ni y\underset{d}\to x} \mu(B(x,r) \Delta B(y,r))=0.
 \end{equation}
 The measure $\mu$ is called \emph{metrically continuous}.
\end{definition}

According to our best knowledge the above notion appeared for the first time in the literature in G\'orka~\cite{gork}.

The following lemma collects some basic facts about continuity of a measure with respect to the metric (see \cite{GG} for the proofs). In the presentation below we will appeal to these properties a number of times and, therefore, for the sake of convenience we present them here.
\begin{lemma}\label{meas-cont-lem}
 Let $(X, d, \mu)$ be a metric space with a Borel regular measure $\mu$. Then the following hold:
 \begin{enumerate}
 \item \label{mclem-1} If $\mu$ is continuous with respect to the metric $d$, then the map $x\to \muballxr$ is continuous in $d$.
 \item \label{mclem-2} If for every $x\in X$ and every $r>0$ it holds that $\mu(\partial B(x,r))=0$, then $\mu$ is continuous with respect to the metric $d$.
 \item \label{mclem-3} If for every $x\in X$ the function $r\to \muballxr$ is continuous, then $\mu$ is continuous with respect to the metric $d$.
 \end{enumerate}
\end{lemma}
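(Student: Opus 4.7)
My plan is to treat the three assertions separately, with the second and third sharing a common geometric reduction.

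For assertion (\textit{1}), the starting point is the elementary set-theoretic inequality
\[
|\mu(B(x,r)) - \mu(B(y,r))| \le \mu(B(x,r) \Delta B(y,r)),
\]
obtained by writing each ball as the disjoint union of its intersection with the other ball and its unique part. The metric continuity hypothesis then gives continuity of $x \mapsto \mu(B(x,r))$ at once.

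For the remaining two assertions the key observation is that, for $y$ with $d(x,y)=\epsilon$, the triangle inequality forces
\[
B(x,r) \Delta B(y,r) \subset A_\epsilon := \{z \in X : r - \epsilon \le d(z,x) \le r + \epsilon\}.
\]
Indeed, if $z \in B(x,r) \setminus B(y,r)$ then $d(z,y) \ge r$, whence $d(z,x) \ge r - \epsilon$; and if $z \in B(y,r) \setminus B(x,r)$ then $d(z,x) \ge r$ together with $d(z,x) \le d(z,y) + d(y,x) < r + \epsilon$. Thus both statements reduce to verifying $\mu(A_\epsilon) \to 0$ as $\epsilon \to 0^+$.

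For (\textit{2}), the family $\{A_\epsilon\}_{\epsilon>0}$ is nested decreasing in $\epsilon$, is contained in the finite-measure ball $B(x,2r)$, and satisfies $\bigcap_{\epsilon>0} A_\epsilon = \partial B(x,r)$; continuity of measure from above therefore gives $\mu(A_\epsilon) \to \mu(\partial B(x,r)) = 0$ by hypothesis. For (\textit{3}), I would simply estimate $\mu(A_\epsilon) \le \mu(B(x, r+2\epsilon)) - \mu(B(x, r-\epsilon))$ (or an analogous expression, adjusting for open/closed balls), and the right-hand side vanishes with $\epsilon$ by the assumed continuity of $r \mapsto \mu(B(x,r))$ at the fixed radius. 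The only subtlety I anticipate is careful bookkeeping of strict versus non-strict inequalities to ensure the intersection in (\textit{2}) is exactly the sphere $\partial B(x,r)$; this is routine but must be handled to match the hypothesis on $\mu(\partial B(x,r))$.
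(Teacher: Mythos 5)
The paper itself offers no proof of this lemma --- it only cites Corollary~2.1, Lemma~2.1 and Theorem~2.1 of [GG] --- so your proposal has to stand on its own. Your arguments for parts (1) and (3) do: the inequality $|\mu(B(x,r))-\mu(B(y,r))|\le \mu(B(x,r)\,\Delta\, B(y,r))$ settles (1), and for (3) the inclusion of $B(x,r)\,\Delta\, B(y,r)$ in the annulus $A_\epsilon=\{z: r-\epsilon\le d(z,x)\le r+\epsilon\}$ together with $\mu(A_\epsilon)\le\mu(B(x,r+2\epsilon))-\mu(B(x,r-\epsilon))$ and the assumed continuity of $r\mapsto\mu(B(x,r))$ finishes the proof (the standing assumption $\mu(B)<\infty$ for all balls supplies the finiteness you need).

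Part (2), however, has a genuine gap, located exactly at the point you dismiss as bookkeeping. The decreasing intersection $\bigcap_{\epsilon>0}A_\epsilon$ is the metric sphere $S(x,r)=\{z: d(z,x)=r\}$, not the topological boundary $\partial B(x,r)=\overline{B(x,r)}\setminus B(x,r)$. One always has $\partial B(x,r)\subset S(x,r)$, but the inclusion can be strict, and the discrepancy cannot be repaired by adjusting strict versus non-strict inequalities: it can happen that every topological boundary of a ball is $\mu$-null while some sphere carries positive mass, and then the conclusion itself fails. Concretely, take $X=\bigl([0,1]\times\{0\}\bigr)\cup\{p\}\subset\R^2$ with $p=(\tfrac12,\tfrac12)$, the Euclidean metric, and $\mu$ equal to length measure on the segment plus a unit atom at $p$. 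Since $p$ is isolated it lies in no topological boundary, so every $\partial B(z,s)$ consists of at most two points of the segment and is $\mu$-null; yet for $x=(0,0)$, $r=\sqrt{2}/2$ and $y_n=(1/n,0)\to x$ one has $p\in B(y_n,r)\setminus B(x,r)$, hence $\mu(B(x,r)\,\Delta\, B(y_n,r))\ge 1$ and metric continuity fails at $x$. So your argument for (2) --- and indeed the statement --- only works if $\partial B(x,r)$ is read as the sphere $\{z:d(z,x)=r\}$, which is evidently the convention intended in [GG]; under that reading your continuity-from-above argument is correct, and in fact (2) becomes equivalent to (3), since $r\mapsto\mu(B(x,r))$ is always left-continuous and is right-continuous at $r$ precisely when $\mu(S(x,r))=0$. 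You should either state that convention explicitly or supply the missing argument; as written, the claim $\bigcap_{\epsilon>0}A_\epsilon=\partial B(x,r)$ is false.
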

\begin{proof}
 For the proof of Property~\ref{mclem-1}, see Corollary 2.1 in~\cite{GG}. Property~\ref{mclem-2} is proved in Lemma 2.1 in \cite{GG}, while Property~\ref{mclem-3} is proved in Theorem 2.1 in \cite{GG}.
\end{proof}

Following \cite{GG}, we recall that a metric space $(X, d)$ has the \emph{segment property} if for any
$x, y \in X$ there exists a continuous curve $\gamma:[0, 1]\to X$ joining $x$ and $y$ and such that for all $t\in [0,1]$ we have that
\[
 d(x,y)= d(x,\gamma(t))+ d(\gamma(t),y).
\]
Recall further, that a metric space $(X, d)$ is geodesic if any two points $x, y\in X$ can be joint by a curve $\gamma$ whose length equals distance $d(x,y)$. For a large class of metric spaces we can easily show their  bi-Lipschitz equivalence to geodesic spaces. Namely, let $X$ be a Loewner Ahlfors regular space (see Definition 3.1 in Heinonen--Koskela~\cite{hk}). Then $X$ is quasiconvex, see Theorem 8.23 in Heinonen~\cite{hei01}. If $X$ is additionally proper, then one can introduce a new metric in $X$ by taking the infimum of lengths of all rectifiable curves joining two points, see Remark 9.11 and Chapter 8 in \cite{hei01}, also \cite{hk} for further discussion on Loewner spaces. According to Theorem 2.2 in \cite{GG}, if $(X, d, \mu)$ is a doubling measure space with the segment property, then $\mu$ is continuous with respect to metric $d$. In a consequence we get the following result.
\begin{prop}\label{prop-dbl}
  Let $(X, d, \mu)$ be a geodesic doubling metric measure space. Then $\mu$ is continuous with respect to metric $d$.
\end{prop}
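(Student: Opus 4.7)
The plan is to deduce the proposition by a short reduction to Theorem~2.2 of \cite{GG}, which is already invoked in the paragraph preceding the statement: if $(X,d,\mu)$ is a doubling metric measure space with the segment property, then $\mu$ is continuous with respect to $d$. Thus the only thing that needs to be verified is that every geodesic metric space enjoys the segment property, after which the proposition follows at once by applying that theorem.

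To show that geodesicity implies the segment property, I would take arbitrary $x,y\in X$ and a geodesic $\tilde\gamma\colon[0,L]\to X$ joining them, where $L=d(x,y)$ and $\tilde\gamma$ is parametrized by arc-length. Reparametrize by $\gamma(t):=\tilde\gamma(tL)$ for $t\in[0,1]$; then $\gamma$ is continuous, $\gamma(0)=x$ and $\gamma(1)=y$. For a fixed $t\in[0,1]$ the restrictions $\gamma|_{[0,t]}$ and $\gamma|_{[t,1]}$ are rectifiable of lengths $tL$ and $(1-t)L$ respectively, so
\[
d(x,\gamma(t))\le tL,\qquad d(\gamma(t),y)\le (1-t)L.
\]
Combined with the triangle inequality $d(x,y)\le d(x,\gamma(t))+d(\gamma(t),y)$ and the identity $tL+(1-t)L=L=d(x,y)$, both inequalities must be equalities and in particular $d(x,\gamma(t))+d(\gamma(t),y)=d(x,y)$. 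This is exactly the segment property.

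With the segment property in hand, Theorem~2.2 of \cite{GG} (cited just above the proposition) immediately gives that the doubling measure $\mu$ is continuous with respect to $d$, completing the proof.

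There is no real obstacle here; the only substantive observation is the standard fact that in a geodesic space any constant-speed minimizing curve realizes the additivity of distance along itself, which is what the segment property demands. Everything else is just citing the existing machinery from \cite{GG}.
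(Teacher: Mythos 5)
Your proposal is correct and follows essentially the same route as the paper: reduce to Theorem~2.2 of \cite{GG} and verify that a geodesic space has the segment property by splitting the geodesic at $\gamma(t)$, bounding each distance by the length of the corresponding sub-arc, and invoking the triangle inequality to force equality. The paper phrases this last step as a short contradiction argument rather than via an explicit arc-length parametrization, but the content is identical.
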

\begin{proof}
 By Theorem 2.2 in \cite{GG} it is enough to show that a geodesic space has the segment property. Indeed, let $x,y \in X$ and let $\gamma_{xy}$ be a curve such that $d(x,y)=l(\gamma_{xy})$. Choose $t\in [0,1]$ and denote $z=\gamma(t)$. Then $l(\gamma_{xy})=l(\gamma_{xz})+l(\gamma_{zy})$. Moreover, $l(\gamma_{xz})=d(x,z)$ and $l(\gamma_{zy})=d(z,y)$. Otherwise, suppose that $l(\gamma_{xz})>d(x,z)$. Then,
 $d(x,z)+d(z,y)<l(\gamma_{xz})+l(\gamma_{zy})=d(x,y)$, contradicting the triangle inequality. Therefore, we have that
 \[
 d(x,z)+d(z,y)=l(\gamma_{xz})+l(\gamma_{zy})=l(\gamma_{xy})=d(x,y)\leq d(x,z)+d(z,y).
 \]
 Hence, $X$ has a segment property and the proof of the proposition is completed.
\end{proof}

In the paper we investigate other types of measures, for instance uniform measures (Definition~\ref{def-uni-m}) and measures satisfying $\delta$-annular decay condition for some $\delta\in(0,1]$ (Definition~\ref{defn-an-dec}). As mentioned in the Introduction, it turns out that measures continuous with respect to a metric seem to be most general among the aforementioned measures. We present the following list of relations between measures studied below.

 Denote by $(X, d, \mu)$ a metric measure space and the following properties:
  \begin{itemize}
 \item[(1)] $X$ is geodesic and $\mu$ is doubling,
 \item[(1')] $X$ is a length space and $\mu$ is doubling,
 \item[(2)] $X$ has the $\delta$-annular decay property for some $\delta\in(0,1]$,
 \item[(2')] $X$ has the $1$-annular decay property,
 \item[(3)] $\mu$ is a uniform measure,
 \item[(4)] $\mu$ is continuous with respect to $d$.
 \end{itemize}

 Then, the following inclusions hold:
 \begin{align*}
 &(1) \quad \Longrightarrow \quad (4)\qquad \hbox{(Proposition~\ref{prop-dbl})}\\
 &(1') \quad \Longrightarrow \quad (2) \quad \Longrightarrow \quad (4)\qquad \hbox{(Definition~\ref{defn-an-dec} and the discussion following it, Remark~\ref{remark-ann-dec})}\\
 &(4) \quad \notimplies \quad (2) \qquad \hbox{(Example~\ref{ex-ann-dec})}\\
 &(3) \quad \Longrightarrow \quad (2')
 \end{align*}
 
We close the preliminary part of our presentation with recalling some basic definitions and facts about the first order Calculus on metric spaces. The presented results will be employed in Section~\ref{sect4} in the studies of differentiability properties of harmonic functions in metric spaces.  For foundations of the analysis on metric spaces we refer e.g. to a book and a survey article by Heinonen~\cite{hei01}, \cite{he07}, see also Heinonen--Koskela~\cite{hk} and Haj\l asz--Koskela~\cite{HajKosk}.

We say that a property holds for $p$-a.e. rectifiable curve, if it fails only for a curve family $\Gamma$ with zero
$p$-modulus, see e.g. V\"ais\"al\"a~\cite{va} and Section 2 in~\cite{hk} for definitions and properties of the modulus of curve families in Euclidean and metric settings, respectively.

\begin{definition}
 Let $(X, d, ,\mu)$ be a metric measure space and  $f:X\to[-\infty,\infty]$. We say that a nonnegative Borel function $g_f$ on $X$ is an \emph{upper gradient} if for all nonconstant rectifiable curves $\gamma:[0,l(\gamma)]\to X$,
 parameterized by arc length $ds$, we have
 \begin{equation}\label{eq:upperGrad}
    |f(\gamma(0))-f(\gamma(l(\gamma)))|\le \int_\gamma g_f\, ds
 \end{equation}
whenever both $f(\gamma(0))$ and $f(\gamma(l(\gamma)))$ are finite, and
$\int_\gamma g_f\, ds=\infty $ otherwise.

If $g_f$ is a nonnegative measurable function on $X$ and if (\ref{eq:upperGrad}) holds for $p$-a.e.~nonconstant
rectifiable curve, then $g_f$ is called a \emph{$p$-weak upper gradient} of~$f$.
\end{definition}

Upper gradients were introduced in~\cite{hk}, whereas $p$-weak upper gradients were first defined in
Koskela--MacManus~\cite{KoMc}. A relation between those two notions follows from a result in \cite{KoMc}, where
it is also shown that a $p$-weak upper gradient of $f$ can be approximated by a sequence of upper gradients of $f$ in $L^p(X)$. Moreover, if $f$ has an upper gradient in $L^p(X)$, then it has a \emph{minimal $p$-weak upper gradient} in $L^p(X)$, see Corollary~3.7 in Shanmugalingam~\cite{Sh-harm}.

Let $p\geq 1$. We say that $X$ supports a \emph{$(1,p)$-Poincar\'e inequality} if there exist constants $C_{PI}>0$ and $\lambda \ge 1$ such that for all balls $B \subset X$ and all integrable function $f$ on $X$ and all upper gradients $g_f$ of $f$,
\begin{equation} \label{PI-ineq}
    \vint_{B} |u-u_B| \,d\mu \le C_{PI} \diam B \left( \vint_{\lambda B} g_f^{p} \,d\mu \right)^{1/p},
\end{equation}
where
\[
u_B :=\vint_B u \, d\mu :=\frac{1}{\mu(B)}\int_B u \,d\mu.
\]

\section{Harmonic functions}\label{Sec-harm}

In this section we introduce and present some elementary properties of the two fundamental notions of our work, namely \emph{weakly harmonic} and \emph{(strongly) harmonic} functions for subsets of metric measure spaces, both based on the mean value property.

 Our first definition corresponds to the most classical mean value property required to hold at every point of the underlying domain. Functions with such property will be called \emph{strongly harmonic}. However, in what follows we will often drop term \emph{strongly} and write, \emph{harmonic} functions.

\begin{definition}\label{defn-harm}
 Let $\Om\subset X$ be an open set. A locally integrable function $f:\Om\to \R$ is called \emph{(strongly) harmonic} in $\Om$ if the following inequality holds for all balls $B(x,r)\Subset \Om$ with $x\in \Om$ and $r>0$:
 \[
  f(x)=\frac{1}{\muballxr}\int_{B(x,r)} f(z) d\mu(z).
  \]
 The set of all harmonic functions in $\Om$ will be denoted $\harm(\Om, \mu)$ and $\harm(\Om)$ in case the measure is fixed.
\end{definition}

The studies of relations between the harmonicity and the mean value property in the Euclidean setting have long history. It was Gauss who, perhaps first, observed that harmonic functions posses the mean value property. The opposite question, whether one need to require mean value property to hold for all radii of balls centered at the given point has also been investigated by several mathematicians, to mention results due to Koebe, Volterra and Kellogg, Hansen and Nadirashvili, and Blaschke, Privaloff and Zaremba. We refer to Section 2 in Llorente~\cite{llo} for an interesting historical account on the mean value property and harmonicity; also to Heath~\cite{hea} for further studies on to what extent the restricted mean value property is sufficient for harmonicity in the Euclidean setting. In order to motivate Definition~\ref{defn-w-harm} below more thoroughly, let us just mention that Koebe, for instance, showed that in order for a continuous function in a domain $\Om\subset \R^n$ to be harmonic it is enough to satisfy the mean value property at every $x\in \Om$ with respect to some family of radii $r^x$ with $\inf r^x=0$. If we strengthen the assumption on function and require it to be continuous on the closure of a domain, then Volterra and Kellogg proved that one radius at every point is enough for the mean value property to imply the harmonicity. Hansen and Nadirashvili improved the previous results by substituting continuity of a function up to the boundary by its boundedness. Blaschke, Privaloff and Zaremba independently observed that an asymptotic mean value property is enough to imply the harmonicity. Their results facilitated the discovery of $p$-harmonious functions, see Manfredi--Parviainen--Rossi~\cite{mpr} for the definitions and relations between $p$-harmonious functions and stochastic tug-of-war games.

In order to provide examples of studies beyond the Euclidean framework, let us mention that the mean value property appears in the setting of differential geometry, e.g. in the studies of the so-called harmonic manifolds and related notions of horospheres and the Lichnerowicz Conjecture. Recall, that a complete Riemannian manifold $M$ is called harmonic if harmonic functions on $M$ satisfy the mean value property, see Willmore~\cite{wil}, Ranjan-Shah~\cite{rans}, also Todjihounde~\cite{tod} for further definitions and references. Furthermore, see e.g. Picardello-Woess~\cite{piw} and Zucca~\cite{zuc} for the studies of mean value property in the context of harmonic functions on trees.

Motivated by the above considerations and the literature, we introduce the following more general variant of harmonic functions on metric measure spaces.

\begin{definition}\label{defn-w-harm}
 Let $\Om\subset X$ be an open set. A locally integrable function $f:\Om\to \R$ is called \emph{weakly harmonic} in $\Om$ if for every $x\in \Om$ there exists a non-empty set of positive radii $r^x_\alpha$ for $\alpha\in I$ such that the following inequality holds for all balls $B(x, r^x_\alpha)\Subset \Om$:
 \[
  f(x)=\frac{1}{\mu(B(x, r^x_\alpha))}\int_{B(x,r^x_\alpha)} f(z) d\mu(z).
  \]

 The set of all weakly harmonic functions in $\Om$ will be denoted $\wharm(\Om, \mu)$ and $\wharm(\Om)$ in case the measure is fixed.
\end{definition}

A priori we allow set of indexes $I$ to be any  non-empty set, e.g. $I$ can be uncountable. However, in what follows we will study weakly harmonic functions under minimal assumptions, namely that at every point there is at least one admissible radii and that $I$ is at most countable.

 We denote by
 \[
  r^x_M:=\sup_{i\in\{1,2,\ldots\}} r^x_i
 \]
   and related $r^{\Om}_M:=\sup_{x\in \Om} r^x_M$. However, in the presentation below we shall write $r^{\Om}_M:=r_M$, if $\Om$ is fixed or clear from the context.

 We further remark, that if $\Om$ is a bounded domain, then $r_M\leq \diam \Om$.

In what follows also the minimal radius at the point will play a role. Namely, for any $x\in \Om$ we denote by
\begin{align*}
 r^x_m&:=\inf_{i\in\{1,2,\ldots\}} r^x_i \geq 0, \\
 r^{\Om}_m&:=\inf_{x\in \Om} r^x_m.
\end{align*}
We will often require $r^x_m>0$ or $r_m>0$.

\begin{example}
Let $(\mathbb{R}, |\cdot|, |x|dx)$ be a metric measure space equipped with the Euclidean distance and a measure $\mu$ such that $d\mu:=d\mu(x)=|x|dx$. Define a function $f:\R\to\R$ as follows
\begin{equation*}
f(x)=
\begin{cases}
\frac{1}{x} \chi_{\mathbb{R}\setminus \{0\}},\quad &x\not=0,\\
0,\quad &x=0.
\end{cases}
\end{equation*}
Then $f$ is weakly harmonic but not harmonic. Indeed, $f$ is locally integrable and by letting $y \neq 0$ and $r<|y|$, we find that
\[
 \vint_{B(y,r)} f(z) d\mu = \frac{\int_{y-r}^{y+r} \frac{1}{x}|x| dx}{\int_{y-r}^{y+r}|x| dx} = \frac{1}{y}= f(y).
\]
Moreover, for any $r>0$ we have
\[
 \vint_{B(0,r)} f(z) d\mu  =0= f(0).
\]
On the other hand, if we take $y>0$ and $r>y$, then
\[
 \mu(B(y,r))=\int_{y-r}^{y+r}|x|dx=r^2+y^2,
\]
and thus
\[
 \vint_{B(y,r)} f(z) d\mu = \frac{2y}{y^2 + r^2} \neq f(y).
\]
\end{example}

Similarly we define super- and subharmonic functions.
\begin{definition}\label{defn-sub-super}
 Let $\Om\subset X$ be an open set. A locally integrable function $f:\Om\to \R$ is called a \emph{subharmonic}(\emph{superharmonic}) in $\Om$ if the following inequality holds for all balls $B(x,r)\Subset \Om$ with $x\in \Om$ and $r>0$:
 \[
  f(x)\leq (\geq)\frac{1}{\muballxr}\int_{B(x,r)} f(z) d\mu(z)
  \]
\end{definition}
We denote $\subh(\Om, \mu)$ the set of all subharmonic functions in $\Om$ with respect to the measure $\mu$ while the set of all superharmonic functions in $\Om$ will be denoted $\superh(\Om, \mu)$. For the sake of simplicity when the measure is fixed, we will often write $\subh(\Om)$ ($\superh(\Om)$).

Similarly, we define weakly sub- and superharmonic functions, cf. Definition~\ref{defn-w-harm} and denote them by
$w\subh(\Om, \mu)$ and $w\superh(\Om, \mu)$, respectively (also $w\subh(\Om)$ and $w\superh(\Om)$, respectively).

We present now further properties of harmonic functions.

\begin{prop}\label{further-prop}
 The following properties hold:
 \begin{enumerate}
 \item \label{further-prop-1} Let $f\in \harm(\Om)$. If $m\in \R$, then $f-m\in \harm(\Om)$ and $(f-m)_+\in \superh(\Om)$.
 \item \label{further-prop-2} Let $f\in \superh(\Om)$. Let $F: f(\Om)\to \R$ be concave and increasing. Then $F\circ f$ is superharmonic. Furthermore, if $f\in \harm(\Om)$, then $F\circ f$ is superharmonic for $F$ merely concave.
 \item \label{further-prop-3} Let $f\in \subh(\Om)$. Let $F: f(\Om)\to \R$ be convex and increasing. Then
 $F\circ f$ is subharmonic. Furthermore, if $f\in \harm(\Om)$, then $F\circ f$ is subharmonic for $F$ merely convex.
 \end{enumerate}
 The analogous properties hold for weakly harmonic (sub-, and superharmonic) functions.
\end{prop}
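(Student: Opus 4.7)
The plan is to reduce all three assertions to a single ingredient, Jensen's inequality, combined with the monotonicity of $F$ when available. Throughout, fix an arbitrary ball $B=B(x,r)\Subset\Om$; for the weakly harmonic variants one simply restricts attention to the admissible radii $r^x_\alpha$ at each $x$, so the strong and weak cases are handled by the same argument.

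For Property~\ref{further-prop-1}, the first claim is immediate by linearity: $\vint_B (f-m)\,d\mu = \vint_B f\,d\mu - m = f(x)-m$, since constants integrate to themselves. The second claim is a special case of Property~\ref{further-prop-3}, applied with the increasing convex function $F(t)=\max(t,0)$ to the harmonic function $f-m$, which yields $(f-m)_+\in\subh(\Om)$. (The symbol $\superh$ appearing in the statement is presumably a typographical slip, since $t\mapsto t_+$ is convex, not concave.)

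For Property~\ref{further-prop-2}, let $F$ be concave and increasing and let $f\in\superh(\Om)$. Jensen's inequality for concave integrands gives
$$\vint_B F\circ f\,d\mu \;\leq\; F\!\left(\vint_B f\,d\mu\right),$$
and the superharmonicity bound $\vint_B f\,d\mu\leq f(x)$, together with monotonicity of $F$, yields $F(\vint_B f\,d\mu)\leq F(f(x))$. Chaining the two inequalities produces $\vint_B F\circ f\,d\mu\leq F(f(x))$, i.e.\ $F\circ f\in\superh(\Om)$. If one strengthens the hypothesis to $f\in\harm(\Om)$, then $\vint_B f\,d\mu=f(x)$, the monotonicity step becomes redundant, and concavity of $F$ alone suffices. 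Property~\ref{further-prop-3} is the exact mirror: Jensen for convex $F$ reverses the direction of the first inequality, subharmonicity $f(x)\leq\vint_B f\,d\mu$ replaces superharmonicity, and increasingness of $F$ combines with it to give $F(f(x))\leq\vint_B F\circ f\,d\mu$; once again the monotonicity hypothesis is superfluous when $f$ is harmonic.

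The only genuine technical caveat is to verify that $F\circ f$ is itself locally integrable on $\Om$, so that the right-hand side averages are defined. This follows from the fact that any concave or convex $F$ is continuous on the interior of its (interval) domain and admits a two-sided affine bound on any compact subinterval, together with local integrability of $f$; it is routine bookkeeping rather than the heart of the matter. I therefore do not foresee any serious obstacle: all the work is carried by Jensen's inequality and by the elementary monotonicity argument for increasing $F$.
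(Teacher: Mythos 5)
Your argument is correct and follows essentially the same route as the paper: the first claim of Property~\ref{further-prop-1} is the same linearity computation, and Properties~\ref{further-prop-2} and~\ref{further-prop-3} are obtained from exactly the chain $F(f(x))\geq F\bigl(\vint_B f\,d\mu\bigr)\geq \vint_B F\circ f\,d\mu$ (the paper labels the second step ``the Young inequality,'' but it is Jensen's inequality for the normalized measure on $B$, as you use it). Your suspicion about $(f-m)_+$ is well founded. The paper computes directly and asserts
\[
\frac{1}{\muball}\int_{B}(f(z)-m)_+\,d\mu=\frac{1}{\muball}\int_{B\cap\{f>m\}}(f(z)-m)\,d\mu\;\leq\;(f(x)-m)_+,
\]
but the inequality goes the other way: discarding the set $B\cap\{f\leq m\}$, on which the integrand $f-m$ is nonpositive, can only increase the integral, so $\vint_B(f-m)_+\,d\mu\geq\vint_B(f-m)\,d\mu=f(x)-m$, and also $\vint_B(f-m)_+\,d\mu\geq 0$, whence $\vint_B(f-m)_+\,d\mu\geq(f(x)-m)_+$. (Concretely, for a nonconstant harmonic $f$ and $m=f(x)$ the left-hand side is positive while the right-hand side vanishes.) With the paper's sign convention in Definition~\ref{defn-sub-super}, this says $(f-m)_+\in\subh(\Om)$, which is exactly what your reduction to Property~\ref{further-prop-3} with the convex increasing $F(t)=\max(t,0)$ yields; the occurrence of $\superh(\Om)$ in the statement and the $\leq$ in the paper's displayed inequality are errors, and your version is the correct one. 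Your closing remark on the local integrability of $F\circ f$ addresses a point the paper passes over silently, and the affine-bound justification you sketch is adequate.
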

\begin{proof}
 Denote $B:=B(x,r)\Subset \Om$ a ball centered at $x\in\Om$ with $r>0$. In order to show Property \ref{further-prop-1} we note that
 \[
  f(x)-m=\frac{1}{\muball} \int_{B}f(z) d\mu - m \frac{1}{\muball} \int_{B}1 d\mu=\frac{1}{\muball} \int_{B}(f(z)-m) d\mu.
 \]
 Similarly, we show that
 \[
  \frac{1}{\muball} \int_{B} (f(z)-m)_{+} d\mu=\frac{1}{\muball} \int_{B\cap \{f>m\}}(f(z)-m) d\mu + \int_{B\cap \{f \leq m\}} 0 d\mu  \leq (f(x)-m)_{+}.
 \]
 The Young inequality gives us Property \ref{further-prop-2}.
 \begin{equation}\label{further-prop-2-ineq}
  F(f(x))\geq F\left(\frac{1}{\muball} \int_{B}f(z) d\mu\right) \geq \frac{1}{\muball} \int_{B}F(f(z)) d\mu.
 \end{equation}
 If $f\in \harm(\Om)$, then the first inequality above becomes equality giving us the second part of Property \ref{further-prop-2}.

 The proof of Property \ref{further-prop-3} follows the same lines as the one for Property \ref{further-prop-2}. In this case inequalities in \eqref{further-prop-2-ineq} are reversed due to convexity of $F$.

 For the proofs of Properties 1-3 for weakly sub/super/harmonic functions one proceeds as above restricting the discussion only to balls with admissible radii.
\end{proof}

\section{Harnack estimates, maximum principles, H\"older and Lipschitz continuity}\label{sect41}

 In this section we show several geometric properties of strongly and weakly harmonic functions such as the Harnack inequalities on balls and compact sets, strong and weak maximum principles and comparison principles.  One of the main results of this section is the H\"older continuity of harmonic functions as in Definition~\ref{defn-harm} for geodesic metric spaces with doubling measures, Theorem~\ref{thm-holder}. Moreover, for spaces satisfying the $\delta$-annular decay condition, see Definition~\ref{defn-an-dec}, we have more accurate estimates, cf. Theorem~\ref{thm-ann-dec}. We also discuss relations between measures continuous with respect to the distance and measures in Definition~\ref{defn-an-dec}, see Remark~\ref{remark-ann-dec} and Example~\ref{ex-ann-dec}.

 First, we need to refine some results from Gaczkowski-G\'orka~\cite{GG}.

\begin{prop}[Continuity of harmonic functions]\label{harm-cont}
 Let $(X,d, \mu)$ be a metric measure space with measure $\mu$ continuous with respect to metric $d$. If $f\in \harm(\Om, \mu)$, then $f$ is continuous in $\Om$.

 The same assertion holds for $f\in \wharm(\Om, \mu)$ at points $x\in \Om$ with the following property: there exists a neighborhood $U$ of $x$ such that
 \begin{equation}\label{harm-cont-wharm}
 \{r^x_1, r^x_2,\ldots\}\cap \bigcap_{y \in U}\{r^y_1, r^y_2,\ldots\}\not=\emptyset.
 \end{equation}
\end{prop}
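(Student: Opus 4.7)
My plan is to fix a point $x\in\Om$ and exploit the mean value property at $x$ and at points $y$ close to $x$ \emph{using a common radius}. In the strongly harmonic case, for any fixed $r>0$ with $B(x,r)\Subset \Om$, openness of $\Om$ guarantees that $B(y,r)\Subset \Om$ for all $y$ in some neighborhood of $x$, so Definition~\ref{defn-harm} yields
\[
 f(y)-f(x)=\frac{1}{\mu(B(y,r))}\int_{B(y,r)} f\,d\mu-\frac{1}{\mu(B(x,r))}\int_{B(x,r)} f\,d\mu.
\]
In the weakly harmonic case the only change is the choice of radius: assumption~\eqref{harm-cont-wharm} produces a single radius $r=r^{x}_{i_0}$ which is admissible both at $x$ and at every $y$ in a suitable neighborhood $U$, so the same identity holds for all $y\in U$.

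Next I would split the right-hand side using the standard decomposition
\[
 \frac{1}{\mu(B(y,r))}\int_{B(y,r)} f\,d\mu-\frac{1}{\mu(B(x,r))}\int_{B(x,r)} f\,d\mu
 =\frac{1}{\mu(B(y,r))}\bigl(I(y)-I(x)\bigr)+I(x)\left(\frac{1}{\mu(B(y,r))}-\frac{1}{\mu(B(x,r))}\right),
\]
where $I(z):=\int_{B(z,r)}f\,d\mu$. The first term is controlled by
\[
 |I(y)-I(x)|\le \int_{B(x,r)\Delta B(y,r)}|f|\,d\mu,
\]
which tends to $0$ as $y\to x$ by absolute continuity of the Lebesgue integral (valid since $f\in L^1_{\mathrm{loc}}(\Om)$) combined with $\mu(B(x,r)\Delta B(y,r))\to 0$ from Definition~\ref{def-m-cont-m}. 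The second term tends to $0$ by Lemma~\ref{meas-cont-lem}\eqref{mclem-1}, which provides $\mu(B(y,r))\to \mu(B(x,r))>0$, so the reciprocals converge as well. Combining these observations gives $f(y)\to f(x)$ as $y\to x$, proving continuity at $x$.

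The bulk of the argument is routine once the common radius is in place, so the only genuine difficulty lies in the weakly harmonic case: without hypothesis~\eqref{harm-cont-wharm} there is no reason that the radii used at $x$ and at nearby points should agree, and the splitting above requires evaluating both averages on balls of the same radius. Hence condition~\eqref{harm-cont-wharm} is exactly what is needed to reduce the weak case to the strong one at the point $x$.
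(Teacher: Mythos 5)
Your argument is correct and follows essentially the same route as the paper: fix a radius $r$ (a common admissible radius in the weakly harmonic case, supplied by \eqref{harm-cont-wharm}), write $f(y)-f(x)$ as a difference of averages, split it into a term controlled by $\int_{B(x,r)\Delta B(y,r)}|f|\,d\mu$ and a term controlled by the difference of the measures of the two balls, and let both tend to zero using metric continuity of $\mu$, Lemma~\ref{meas-cont-lem}\eqref{mclem-1}, and absolute continuity of the integral. The only difference is the cosmetic choice of which ball's measure normalizes each term in the decomposition.
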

 In other words in Proposition~\ref{harm-cont} we require that all points $y$ in every neighborhood $U$ of a point $x$ have at least one common radius with the set of admissible radii at $x$ for a weakly harmonic function $f$. Then, $f$ is continuous at all points $x\in \Om$ where such property holds.

 We note that here we do not assume that $\mu$ is doubling.
\begin{proof}
 Suppose that $x,y\in \Om$ and fix $r>0$. Then, we have
 \begin{align}
  |f(x)-f(y)|&=\left|\frac{1}{\muballxr}\int_{B(x,r)} f(z) d\mu(z)-\frac{1}{\mu(B(y, r))}\int_{B(y,r)} f(z) d\mu(z)\right| \nonumber \\
  &=\Bigg|\frac{1}{\muballxr}\int_{B(x,r)} f(z) d\mu(z)-\frac{1}{\muballxr}\int_{B(y,r)} f(z) d\mu(z) \nonumber  \\
  &-\frac{\muballxr-\mu(B(y, r))}{\muballxr \mu(B(y, r))}\int_{B(y,r)} f(z) d\mu(z)\Bigg|\nonumber  \\
  &\leq \frac{1}{\muballxr} \int_{B(x,r) \vartriangle B(y,r)} |f(z)| d\mu(z)
  + \frac{|\muballxr-\mu(B(y, r))|}{\muballxr \mu(B(y, r))}\|f\|_{L^1(B(y,r))}.\label{harm-cont-est}
 \end{align}
 Recall that, by definition, function $f\in \harm(\Om, \mu)$ belongs to $L^1_{loc}(\Om)$. Let now $y\to x$ in metric $d$. Then, by the continuity of $\mu$ with respect to $d$ we have that $\mu(B(x,r) \vartriangle B(y,r))\to 0$. This assumption together with the absolute continuity of the Lebesgue integral with respect to the measure imply that $f(y)\to f(x)$.

 Let now $f\in \wharm(\Om)$ and $x \in \Om$ satisfy the assumptions of proposition. Thus, for any sequence $\{y_k\}_{k=1}^{\infty}$ converging to $x$ for $k\to \infty$ in metric $d$ we have that for some $i\in \N$ and all $j, k\in \N$ it holds that $r^x_i=r^{y_k}_j$. Denote such a radius by $r$. This gives us a radius common for points $x$ and all $y_k$ for which the mean value property for $f$ holds. Then, estimate \eqref{harm-cont-est} holds for all $r$ and, as previously,
 \[
  \lim_{y_k\underset{d}\to x} \mu(B(x,r) \vartriangle B(y_k,r))=0
 \]
  completing the proof of the proposition.
\end{proof}

The following observation is an immediate consequence of Proposition~\ref{harm-cont}.
\begin{corol}\label{harm-loc-bdd}
 Let $(X,d, \mu)$ be a metric measure space with measure $\mu$ continuous with respect to metric $d$. If $f\in \harm(\Om, \mu)$, then $f$ is locally bounded in $\Om$. Furthermore, $f\in \wharm(\Om, \mu)$ is locally bounded on sets $E\subset \Om$ such that every $x\in E$ satisfies assumption \eqref{harm-cont-wharm} of Proposition~\ref{harm-cont}.
\end{corol}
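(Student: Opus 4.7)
The plan is to derive the corollary directly from Proposition~\ref{harm-cont}, exploiting the elementary fact that any real-valued function that is continuous at a point is bounded on some neighborhood of that point. There is essentially no new content here beyond repackaging continuity as a pointwise boundedness statement, which is why the excerpt itself introduces the result as an \emph{immediate consequence}.

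For the strongly harmonic case, I would invoke Proposition~\ref{harm-cont} to conclude that $f\in \harm(\Om,\mu)$ is continuous on all of $\Om$. Given $x\in \Om$, I would fix $r>0$ with $B(x,r)\Subset \Om$ and use continuity at $x$ to produce $\delta\in(0,r)$ such that $|f(y)-f(x)|<1$ for every $y\in B(x,\delta)$. This yields $|f|\le |f(x)|+1$ on the $\Om$-neighborhood $B(x,\delta)$ of $x$, which is the definition of local boundedness.

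For the weakly harmonic case, the argument is pointwise identical: Proposition~\ref{harm-cont} asserts that if $x\in E$ satisfies \eqref{harm-cont-wharm}, then $f\in\wharm(\Om,\mu)$ is continuous at $x$, and the same $\varepsilon$--$\delta$ selection as above delivers an open ball around $x$ on which $|f|$ is bounded by $|f(x)|+1$. The only mild subtlety to flag is that \emph{locally bounded on $E$} is understood in the natural sense, namely that every $x\in E$ possesses a neighborhood in $\Om$ (not merely in $E$) on which $f$ is bounded; the continuity-based argument produces exactly such a neighborhood, independently of whether the other points of that neighborhood happen to lie in $E$. Consequently I do not foresee any genuine obstacle; the proof should occupy no more than two or three lines.
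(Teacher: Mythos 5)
Your proposal is correct and matches the paper's intent exactly: the paper offers no separate proof, presenting the corollary as an immediate consequence of Proposition~\ref{harm-cont}, and your argument (continuity at a point yields boundedness on a neighborhood, applied on all of $\Om$ in the strong case and pointwise at each $x\in E$ satisfying \eqref{harm-cont-wharm} in the weak case) is precisely the intended one-line deduction. No gaps.
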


Next, we show that the fundamental Harnack estimate holds for both weakly and strongly harmonic functions.

\begin{lemma}[The Harnack inequality on balls, cf. Lemma 3.2 in \cite{GG}]\label{Harnack-on-balls}
 Let $X$ be a metric measure space with doubling measure $\mu$ and let $f\in \harm(\Om, \mu)$ be a nonnegative function on an open set $\Om\subset X$. Suppose that a ball $B:=B(x,r)\subset \Om$ is such that $B(x, 6r)\Subset \Om$. Then the following inequality holds
 \begin{equation}\label{est-Harnack}
  \sup_{B} f \leq C_H\inf_{B} f,
 \end{equation}
 where $C_H=C_{\mu}^3$ and $C_{\mu}$ stands for a doubling constant of $\mu$.

 Moreover, let $f\in \wharm(\Om, \mu)$ be a nonnegative function on a domain $\Om\subset X$. Suppose that a ball $B:=B(x,r)\subset B(x, 2r^B_M)\Subset \Om$ is such that
 \[
  0<r^{B}_m\leq \sup_{y \in B} r^y_m < r <3r < \inf_{y\in B} r^y_M \leq r^{B}_M.
 \]
 Then, the Harnack inequality \eqref{est-Harnack} holds with constant
 \[
  C_H=C_{\mu}^{\log_{2} \frac{5r^{B}_M}{3r^{B}_m}+1}.
  \]
 \end{lemma}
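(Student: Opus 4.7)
The overall strategy is to establish, for every pair $y, z \in B$, the pointwise comparison $f(y) \geq C_H^{-1} f(z)$; taking $\sup$ in $z$ and then $\inf$ in $y$ on both sides yields the claim. To obtain the pointwise comparison, I apply the mean value property at $y$ with a radius $R$ large enough that $B(z, \rho) \subset B(y, R)$, exploit the nonnegativity of $f$ to restrict the integral to $B(z, \rho)$, and then use the mean value property at $z$ with radius $\rho$. This yields $f(y) \geq \mu(B(z, \rho))/\mu(B(y, R)) \cdot f(z)$, and the constant will emerge from controlling this ratio via the doubling property.

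For the strongly harmonic case, fix any $y, z \in B$ and use $R = 3r$, $\rho = r$. Since $y \in B(x,r)$, one has $B(y, 3r) \subset B(x, 4r) \subset B(x, 6r) \Subset \Om$, and $d(y,z) < 2r$ gives the inclusions $B(z, r) \subset B(y, 3r) \subset B(z, 5r)$. Applying the mean value property at $y$ and then at $z$,
\begin{equation*}
 f(y) = \frac{1}{\mu(B(y,3r))}\int_{B(y,3r)} f \, d\mu \geq \frac{1}{\mu(B(y,3r))}\int_{B(z,r)} f \, d\mu = \frac{\mu(B(z,r))}{\mu(B(y,3r))} f(z).
\end{equation*}
Three iterations of doubling give $\mu(B(y, 3r)) \leq \mu(B(z, 5r)) \leq \mu(B(z, 8r)) \leq C_\mu^3 \mu(B(z, r))$, hence $f(y) \geq C_\mu^{-3} f(z)$, and the Harnack estimate with constant $C_\mu^3$ follows.

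For the weakly harmonic case, the radii must be admissible, but the scheme is the same. Given $y, z \in B$, the hypothesis $\sup_{y'\in B} r^{y'}_m < r$ furnishes an admissible radius $\rho$ at $z$ with $r^B_m \leq \rho < r$, while $\inf_{y'\in B} r^{y'}_M > 3r$ furnishes an admissible radius $R$ at $y$ with $3r < R \leq r^B_M$. The containment $B(x, 2 r^B_M) \Subset \Om$ together with $r < r^B_M$ ensures $B(y, R) \subset B(x, r + r^B_M) \Subset \Om$, while $d(y,z) + \rho < 3r < R$ gives $B(z, \rho) \subset B(y, R)$. The same chain of mean value property applications produces $f(y) \geq \mu(B(z, \rho))/\mu(B(y, R)) \cdot f(z)$. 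To extract the sharp constant, I use the inclusion $B(y, R) \subset B(z, 2r + R) \subset B(z, \frac{5 r^B_M}{3})$, where the last step is the crucial computation $2r + R \leq \frac{2 r^B_M}{3} + r^B_M = \frac{5 r^B_M}{3}$ (combining $r < r^B_M/3$ from $3r < \inf r^{y'}_M$ with $R \leq r^B_M$). Choosing $k := \lceil \log_2 \frac{5 r^B_M}{3 r^B_m} \rceil$, which satisfies $k \leq \log_2 \frac{5 r^B_M}{3 r^B_m} + 1$ and makes $2^k \rho \geq \frac{5 r^B_M}{3}$, and iterating doubling $k$ times yields $\mu(B(y, R)) \leq C_\mu^k \mu(B(z, \rho))$, which gives the stated $C_H$. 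The main subtlety is selecting the admissible radii so that the ball inclusions and the factor $\frac{5}{3}$ come out exactly as stated; once the radii are chosen correctly, the rest is routine manipulation of doubling.
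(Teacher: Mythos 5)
Your proof is correct and follows essentially the same route as the paper's: sandwich $B(z,\rho)\subset B(y,R)\subset B(z,\tfrac{5}{3}r^B_M)$ (resp. $B(z,r)\subset B(y,3r)\subset B(z,5r)$), use nonnegativity and the mean value property at both points, and control the measure ratio by iterating the doubling condition. The only (welcome) refinement is that in the weak case you select an actual admissible radius $R\in(3r,r^B_M]$ at $y$ rather than integrating over $B(y,r^y_M)$ as the paper does, where $r^y_M$ is a supremum and need not itself be admissible.
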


\begin{proof}
 We follow the steps of reasoning in \cite{GG} and note that by assumptions on a strongly harmonic function $f$ the following inequality holds for any $y,z \in B$
 \[
  \int_{B(y, 3r)} f d\mu \geq  \int_{B(z, r)} f d\mu.
 \]
 Hence, the harmonicity of $f$ and the doubling property of $\mu$ together with the fact that $B(y, 3r)\subset B(z, 5r)$ imply
 \begin{equation}
  f(z)\leq \frac{\mu(B(y, 3r))}{\mu(B(z, r))}f(y) \leq \frac{\mu(B(z, 5r))}{\mu(B(z, r))} f(y) \leq C_{\mu}^3 f(y).
  \label{Harn-ball-strong}
 \end{equation}

 Similarly, if $f$ is weakly harmonic, then the above approach gives us for $y,z \in B$ that
 \[
  \int_{B(y, 3r)} f d\mu \geq  \int_{B(z, r)} f d\mu\geq \int_{B(z, r^z_{i_0})} f d\mu=f(z)\mu(B(z, r^z_{i_0})).
 \]
 In the last estimate we have also used the assumption that $\sup_{y \in B} r^y_m < r$, and hence there exists an admissible radius at $z$ such that $r^z_{i_0}<r$. Moreover, it holds that
 \[
  \int_{B(y, 3r)} f d\mu \leq \int_{B(y, r^y_M)} f d\mu=f(y)\mu(B(y, r^y_M)),
 \]
 since by assumptions $3r < \inf_{y\in B} r^y_M$. By the analogy to the case of $f\in \harm(\Om)$, we obtain
 \begin{equation}
  f(z)\leq \frac{\mu(B(y, r^y_M))}{\mu(B(z, r^z_{i_0}))}f(y) \leq \frac{\mu(B(z, 5/3r^y_M))}{\mu(B(z, r^{B}_m))} f(y) \leq \frac{\mu(B(z, 5/3 r^{B}_M))}{\mu(B(z, r^{B}_m))} f(y) \leq C_{\mu}^{\log_{2} \frac{5r^{B}_M}{3r^{B}_m}+1} f(y),
  \label{Harn-ball-weak}
 \end{equation}
 since $3r<r^y_M$ and, hence, $d(y,z)+r^y_M\leq 5/3 r^y_M$. Here, we also appealed to the doubling property of $\mu$.

 Since both \eqref{Harn-ball-strong} and \eqref{Harn-ball-weak} hold for any $y, z \in B$ they hold for supremum and infimum as well resulting in the assertion of the lemma.
\end{proof}

In order to show the Harnack estimate on compact sets for weakly harmonic functions we will need the following variant of Lemma~\ref{Harnack-on-balls}.

\begin{lemma}\label{Harnack-on-balls2}
 Let $X$ be a metric measure space with doubling measure $\mu$ and let $f\in \wharm(\Om, \mu)$ be a nonnegative function on a domain $\Om\subset X$. Suppose that a ball $B:=B(x,r)\subset B(x, 2r^B_M)\Subset \Om$ is such that
 \[
  0<r^{\Om}_m\leq r \leq r^{\Om}_M<\infty.
 \]
 and both $r^{\Om}_m$ and $3r^{\Om}_M$ are admissible radii for all $y\in B$.
 Then, the Harnack estimate \eqref{est-Harnack} holds with constant
 \[
  C_H=C_{\mu}^{\log_{2} \frac{5r^{\Om}_M}{3r^{\Om}_m}+1}.
  \]
\end{lemma}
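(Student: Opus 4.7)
The plan is to proceed in direct analogy with the proof of the weakly harmonic case of Lemma~\ref{Harnack-on-balls}, exploiting the key simplification that the single pair $r^{\Om}_m$ and $3r^{\Om}_M$ is admissible at \emph{every} $y\in B$. The strategy is the standard one: for arbitrary $y,z\in B$ I establish a pointwise comparison $f(z)\leq C_H f(y)$, and then take $\sup_{z\in B}$ and $\inf_{y\in B}$.

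First, I would invoke the mean value property at $y$ with the admissible radius $3r^{\Om}_M$ and at $z$ with the admissible radius $r^{\Om}_m$. Since $y,z\in B$ gives $d(y,z)\leq 2r\leq 2r^{\Om}_M$ and $r^{\Om}_m\leq r^{\Om}_M$, the triangle inequality yields the ball inclusion $B(z,r^{\Om}_m)\subset B(y,3r^{\Om}_M)$; combined with $f\geq 0$ this produces
\[
 f(z)\,\mu(B(z,r^{\Om}_m))=\int_{B(z,r^{\Om}_m)} f\,d\mu \leq \int_{B(y,3r^{\Om}_M)} f\,d\mu = f(y)\,\mu(B(y,3r^{\Om}_M)).
\]
Note that no hypothesis of the form $\sup_{y\in B}r^y_m<r<3r<\inf_{y\in B}r^y_M$ is needed here, because the two radii $r^{\Om}_m$ and $3r^{\Om}_M$ are admissible uniformly across $B$ by assumption.

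The second step is to control the ratio $\mu(B(y,3r^{\Om}_M))/\mu(B(z,r^{\Om}_m))$ via doubling. Another application of the triangle inequality gives $B(y,3r^{\Om}_M)\subset B(z,5r^{\Om}_M)$, and then iterating the doubling condition through concentric balls about $z$ from radius $r^{\Om}_m$ up to the outer radius yields a factor of the form $C_{\mu}^{\log_2(5r^{\Om}_M/(3r^{\Om}_m))+1}$, exactly mirroring the bookkeeping leading to \eqref{Harn-ball-weak}. Combining with the previous step gives the pointwise bound $f(z)\leq C_H f(y)$ with the claimed constant, and taking $\sup_{z\in B}$ and $\inf_{y\in B}$ yields the Harnack estimate \eqref{est-Harnack}.

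I do not foresee any substantial obstacle: the argument uses only the weak mean value property, the triangle inequality, and doubling. The sole delicate point, inherited from Lemma~\ref{Harnack-on-balls}, is the careful counting of how many doublings are required to pass from $B(z,r^{\Om}_m)$ to a ball large enough to contain $B(y,3r^{\Om}_M)$, which has to be done to land on the precise exponent in the statement.
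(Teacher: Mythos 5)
Your argument is essentially the paper's own proof: the mean value property at $z$ with radius $r^{\Om}_m$ and at $y$ with radius $3r^{\Om}_M$, the inclusions $B(z,r^{\Om}_m)\subset B(y,3r^{\Om}_M)\subset B(z,5r^{\Om}_M)$, nonnegativity of $f$, and iterated doubling (the paper merely routes through the intermediate balls $B(z,r)\subset B(y,3r)$, which, as you observe, are not needed). One caveat that you share with the paper: doubling from $B(z,r^{\Om}_m)$ up to $B(z,5r^{\Om}_M)$ actually produces the exponent $\log_2\frac{5r^{\Om}_M}{r^{\Om}_m}+1$ rather than the stated $\log_2\frac{5r^{\Om}_M}{3r^{\Om}_m}+1$, because here the admissible radius at $y$ is $3r^{\Om}_M$ and not $r^y_M$ as in Lemma~\ref{Harnack-on-balls}, so the ``precise exponent'' you defer to the bookkeeping does not in fact come out as claimed.
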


\begin{proof}
 We follow the steps of Lemma~\ref{Harnack-on-balls} and, upon notation of the lemma, we arrive at the following estimates
 \begin{align*}
  \int_{B(y, 3r)} f d\mu &\geq  \int_{B(z, r)} f d\mu\geq \int_{B(z, r^{\Om}_m)} f d\mu=f(z)\mu(B(z, r^{\Om}_m)),\\
  \int_{B(y, 3r)} f d\mu &\leq \int_{B(y, 3r^{\Om}_M)} f d\mu=f(y)\mu(B(y, 3r^{\Om}_M)).
 \end{align*}
 By combining these inequalities we obtain an analog of \eqref{Harn-ball-weak}:
 \begin{equation}
  f(z)\leq \frac{\mu(B(y, r^{\Om}_M))}{\mu(B(z, r^{\Om}_m))}f(y) \leq C_{\mu}^{\log_{2} \frac{5r^{\Om}_M}{3r^{\Om}_m}+1} f(y),
 \end{equation}
 where the final constant arises from the doubling property of measure $\mu$. From this the Harnack inequality follows immediately.
\end{proof}

As an immediate consequence we obtain the Harnack estimate on compact sets.
\begin{corol}[The Harnack inequality on compact sets, cf. Theorem 3.4 in \cite{GG}]\label{Harnack-cmpt}
 Let $X$ be a geodesic metric measure space with doubling measure $\mu$, $\Om\subset X$ be an open connected set and let $f\in \harm(\Om, \mu)$ be a nonnegative function. Then, for every compact connected $K\Subset \Om$ the following inequality holds
 \begin{equation}\label{est1-c}
  \sup_{K} f \leq C \inf_{K} f,
 \end{equation}
 where $C>0$ is a constant whose value is independent of $f$, but depends among other parameters on $C_{\mu}$, a doubling constant of $\mu$.

 Moreover, for $f\in \wharm(\Om, \mu)$ estimate \eqref{est1-c} holds provided that
 \[
  0<r^{\Om}_m\leq r \leq r^{\Om}_M<\infty \quad\hbox{and } \dist(K,\partial \Om)>2r^K_M
  \]
 and both $r^{\Om}_m$ and $3r^{\Om}_M$ are admissible radii for all $y\in K$. In such a case the Harnack constant $C=C(C_{\mu}, r^\Om_m, r^\Om_M)$.
 \end{corol}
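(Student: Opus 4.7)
The plan is a standard chaining argument: cover $K$ by finitely many balls on each of which the ball Harnack inequality of Lemma~\ref{Harnack-on-balls} (strong case) or Lemma~\ref{Harnack-on-balls2} (weak case) applies, and iterate this local estimate along a chain of pairwise overlapping balls, which a connected compact set always admits inside any of its finite open covers.

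For $f\in\harm(\Om,\mu)$, since $K\Subset\Om$ the number $\delta:=\dist(K,\partial\Om)$ is strictly positive, so I fix $r_0:=\delta/12$, ensuring $B(x,6r_0)\Subset\Om$ for every $x\in K$ by properness of $X$. Extract a finite subcover $\{B(x_i,r_0)\}_{i=1}^{N}$ of $\{B(x,r_0):x\in K\}$. Connectedness of $K$, together with the finiteness of the cover, produces for every $p,q\in K$ a chain $B(x_{i_1},r_0),\ldots,B(x_{i_k},r_0)$ of length $k\leq N$ with $p\in B(x_{i_1},r_0)$, $q\in B(x_{i_k},r_0)$, and $B(x_{i_j},r_0)\cap B(x_{i_{j+1}},r_0)\neq\emptyset$ for every $j$. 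Picking auxiliary points $z_j$ in each consecutive overlap and applying Lemma~\ref{Harnack-on-balls} on each ball, one obtains the telescoping estimate
\[
f(p)\leq\sup_{B(x_{i_1},r_0)}f\leq C_\mu^{3k}\inf_{B(x_{i_k},r_0)}f\leq C_\mu^{3N}f(q).
\]
Passing to $\sup_{p\in K}$ and $\inf_{q\in K}$ completes the strong case with $C=C_\mu^{3N}$, where $N$ depends on $K$, $\delta$ and $C_\mu$.

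The weakly harmonic case is essentially identical: cover $K$ by balls $B(x,r)$ with centers $x\in K$ and some fixed radius $r\in[r^\Om_m,r^\Om_M]$ (for instance $r=r^\Om_m$), extract a finite subcover and form a chain as before. The hypothesis $\dist(K,\partial\Om)>2r^K_M$ yields $B(x,2r^B_M)\Subset\Om$ for every such ball, and the admissibility assumption provides the two universal radii $r^\Om_m$ and $3r^\Om_M$ required by Lemma~\ref{Harnack-on-balls2}. Each link now contributes the factor $C_\mu^{\log_{2}(5r^\Om_M/3r^\Om_m)+1}$, so the resulting constant has the form $C=C(C_\mu,r^\Om_m,r^\Om_M,N)$, consistent with the dependence stated in the corollary.

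The principal obstacle is checking the hypotheses of the ball-Harnack lemmas uniformly along the chain. In the weakly harmonic setting this is most subtle: the admissibility condition is stated for points of $K$, whereas Lemma~\ref{Harnack-on-balls2} demands admissibility of $r^\Om_m$ and $3r^\Om_M$ at every point of each chain ball. One must therefore either read the hypothesis as admissibility on a slightly thickened compact set $K'$ with $K\subset K'\Subset\Om$ (which the margin $\dist(K,\partial\Om)>2r^K_M$ accommodates), or shrink the covering radius until every chain ball lies in the set where admissibility is assured. The only other ingredient is the standard topological fact that every finite open cover of a connected compact space is chain-connected, which legitimizes the iteration above.
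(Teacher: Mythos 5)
Your proof is correct and follows essentially the same chaining argument as the paper: a finite cover of $K$ by balls on which the ball Harnack inequality applies, followed by iteration along a chain of pairwise overlapping balls, in both the strong and the weak case. The only cosmetic differences are that the paper produces the chain by following a rectifiable curve joining the two points (invoking the geodesic assumption and Lemma~4.38 in Bj\"orn--Bj\"orn) rather than the chain-connectedness of finite open covers of connected compacta, and that the paper records the per-link constant as $C_\mu^2$ whereas Lemma~\ref{Harnack-on-balls} as stated gives $C_\mu^3$, consistent with your $C_\mu^{3N}$; your closing caveat about where the admissibility of $r^\Om_m$ and $3r^\Om_M$ must hold is a fair reading issue that the paper's own proof also glosses over.
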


\begin{remark}
(1) In \cite{GG} the Harnack inequality has been proved for connected sets which are not necessarily path-connected. Furthermore, here we estimate Harnack constants in terms of the doubling constants and admissible radii.

(2) Note that domain $\Om$ need not be bounded. Therefore, we assume that $r^\Om_M<\infty$ in order to ensure that the Harnack constant $C$ in \eqref{est1-c} is finite.
\end{remark}

\begin{proof}[Proof of Corollary~\ref{Harnack-cmpt}]
 The proof follows the standard reasoning and, therefore, we will present only a sketch of it. For every $x\in K$ we find a ball $B(x, r_x)$ such that $B(x, 6r_x)\Subset \Om$. The collection of such balls gives us a open cover of $K$, and by compactness of $K$ we may choose a finite subcover consisting of $N$ balls. Next, take points $x, y\in K$ and connect them by a curve $\gamma$. Indeed, since the space $X$ is geodesic and $\Om$ , Lemma 4.38 in Bj\"orn--Bj\"orn~\cite{bb} implies that any two points in $K$ can be joint by a rectifiable curve. From the collection of previously chosen $N$ balls we choose such that $x\in B_1$ and $y\in B_M$ and $B_i\cap B_{i+1}\not =\emptyset$ for all $i=1,\ldots, M$. Upon choosing points $x_i\in B_i\cap B_{i+1}$ for $i=1,\ldots, M\leq N$ and applying Lemma~\ref{Harnack-on-balls} we have
 \[
  f(x)\leq C_{\mu}^2 f(x_1)\leq \cdots \leq C_{\mu}^{2(n-1)} f(x_{n-1}) \leq C_{\mu}^{2N} f(y).
 \]
 This, together with continuity of $f$, Proposition~\ref{harm-cont} imply the assertion of the corollary with $C:=C_{\mu}^{2N}$.

 The reasoning for weakly harmonic functions is similar. We cover set $K$ with open balls $\mathcal{C}:=\{B(x, r^x)\}_{x\in K}$ such that we can apply a variant of the Harnack estimate on every $B_x$ as in Lemma~\ref{Harnack-on-balls2}. Namely, we assume that $r^x:=r^{\Om}_m$ for all $x\in K$. Moreover, we need to ensure at every $x\in K$ that a ball $B:=B(x, r^x)$ satisfies $B(x,r^x)\subset B(x, 2r^B_M)\Subset \Om$. This, follows from the condition that $\dist(K,\partial \Om)>2r^K_M$. Using compactness of $K$ we choose from the cover $\mathcal{C}$ a finite cover of $K$ by balls $\{B_i\}$ for $i=1,\ldots, n$ as in the case of strongly harmonic functions.

 The remaining part of the chaining argument stays the same as in the case of strongly harmonic functions. Observe, that for all $i=1, 2,\ldots, n$
 \[
  r^{B_i}_M\leq r^{\Om}_M \quad \hbox{and} \quad r^{B_i}_m\geq r^K_m.
 \]
 In a consequence we arrive at the following chain of estimates, cf. Lemmas~\ref{Harnack-on-balls} and~\ref{Harnack-on-balls2}:
  \[
  f(x)\leq C_{\mu}^{\log_{2} \frac{5r^{\Om}_M}{3r^{\Om}_m}+1} f(x_1)\leq \cdots \leq C_{\mu}^{\log_{2} \frac{5^{n-1}}{3^{n-1}}\left( \frac{r^{\Om}_M}{r^{\Om}_m}\cdot\,\cdots\,\cdot\frac{r^{\Om}_M}{r^{\Om}_m}\right)+n} f(x_{n-1}) \leq C_{\mu}^{ \log_{2}\left( \frac{5r^\Om_M}{3r^\Om_m}\right)^n+n} f(y).
 \]
 Hence, in this case we obtain a constant $C:=C_{\mu}^{n\left( \log_{2} \frac{5r^\Om_M}{3r^\Om_m}+1\right)}$.
\end{proof}

The Harnack inequality implies, in the usual way, the strong and weak maximum principles as well as the comparison principle. The strong maximum principle for strongly harmonic functions is proved in Gaczkowski-G\'orka~\cite[Theorem 3.1]{GG} without assumption that $\Om$ is open and $\mu$ is doubling. However, their approach is different than below and for this reason as well as for the sake of completeness we present a new proof based on the Harnack inequality.

It is perhaps surprising, but the following four results are valid also for weakly harmonic functions. In fact, in order for Proposition~\ref{strong-max-pr} and Corollaries~\ref{weak-max-pr} and \ref{comp-pr-compact} to hold for $f\in \wharm(\Om)$, it is enough that at every point of a domain $\Om$ there exists one radius $r^x$ for which the mean value property is satisfied for $f$. 

\begin{prop}[The strong maximum principle]\label{strong-max-pr}
 Let $\Om\subset X$ be open connected and $\mu$ be a doubling measure on $X$. Moreover, let $f\in \harm(\Om, \mu)$ and continuous in $\Om$. If $f$ attains its maximum in $\Om$, then $f$ is constant.
 Furthermore, the assertion holds for $f\in \wharm(\Om, \mu)$ provided that $f$ is continuous (cf. \eqref{harm-cont-wharm} in Proposition~\ref{harm-cont}).
\end{prop}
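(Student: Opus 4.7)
The plan is the standard open-and-closed argument in a connected set. Write $M:=\sup_{\Om} f$, attained by hypothesis, and let
\[
 E:=\{x\in\Om : f(x)=M\}.
\]
Then $E\ne\emptyset$, and since $f$ is assumed continuous (in the strongly harmonic case, continuity follows from Proposition~\ref{harm-cont} under continuity of $\mu$ with respect to $d$, which is implied by $\mu$ doubling and $X$ geodesic via Proposition~\ref{prop-dbl}; in the weakly harmonic case continuity is a standing hypothesis), $E$ is closed in $\Om$ as the preimage of a closed singleton under a continuous map. Since $\Om$ is connected, it remains to prove that $E$ is open; this will force $E=\Om$, i.e., $f\equiv M$.

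For openness I would fix $x_0\in E$ and pass to $g:=M-f$. Proposition~\ref{further-prop}\eqref{further-prop-1} (together with the obvious fact that $-g$ is harmonic whenever $g$ is, directly from the mean value identity) shows that $g\in\harm(\Om,\mu)$ in the strong case, and an analogous verification gives $g\in\wharm(\Om,\mu)$ in the weak case; moreover $g\ge 0$ on $\Om$ with $g(x_0)=0$.

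In the strongly harmonic case I would choose $r>0$ so small that $B(x_0,6r)\Subset\Om$, which is possible because $\Om$ is open, and apply Lemma~\ref{Harnack-on-balls} to the nonnegative harmonic function $g$:
\[
 \sup_{B(x_0,r)} g \;\le\; C_H \inf_{B(x_0,r)} g \;\le\; C_H\, g(x_0) \;=\; 0.
\]
Hence $g\equiv 0$ on $B(x_0,r)$, so $B(x_0,r)\subset E$ and $E$ is open. In the weakly harmonic case, pick an admissible radius $r^{x_0}_i>0$ at $x_0$ with $B:=B(x_0,r^{x_0}_i)\Subset\Om$, which exists by Definition~\ref{defn-w-harm}. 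The mean value identity for $g$ at this ball gives
\[
 0=g(x_0)=\frac{1}{\mu(B)}\int_B g\,d\mu,
\]
and since $g\ge 0$ is continuous on $B$ this forces $g\equiv 0$ on $B$, so $B\subset E$. Either way $E$ is open, and connectedness of $\Om$ concludes the proof.

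The only delicate point is the weakly harmonic case: the second part of Lemma~\ref{Harnack-on-balls} demands the chain $0<r^B_m\le\sup_{y\in B}r^y_m<r<3r<\inf_{y\in B}r^y_M$, which one cannot always arrange near a single point $x_0$ without further hypotheses on the admissible radii. The expected obstacle is therefore to avoid the full Harnack on balls and instead exploit the one admissible radius at $x_0$ directly through the mean value property, as sketched above; this is the cleanest way to make the weak case go through under the bare continuity hypothesis.
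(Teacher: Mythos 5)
Your proposal is correct and follows essentially the same route as the paper: the open-and-closed argument on $E=\{f=M\}$, with openness obtained in the strong case by applying the Harnack inequality of Lemma~\ref{Harnack-on-balls} to the nonnegative harmonic function $M-f$ on a ball $B$ with $6B\Subset\Om$, and in the weak case by using the mean value identity at a single admissible radius to force $\int_B (M-f)\,d\mu=0$ and hence $f\equiv M$ on $B$ by continuity and nonnegativity. You even flag the same obstruction the paper does, namely that the weak Harnack inequality cannot be invoked near a single point without extra hypotheses on the admissible radii, which is exactly why the paper switches to the direct mean value argument there.
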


\begin{proof}
 Denote $M:=\sup_{\Om} f$ and let $\Om'=\{x\in \Om: f(x)=M\}$. By Proposition~\ref{harm-cont} harmonic functions in $\harm(\Om, \mu)$ are continuous, and hence, $\Om'$ is relatively closed in $\Om$. We will show that $\Om'$ is an open subset of $\Om$. Let $B\subset \Om$ be a ball such that $B\cap \Om'\not = \emptyset$ and $6B\Subset \Om$. Denote $g:=M-f\geq 0$ in $\Om$. Proposition~\ref{further-prop}(\ref{further-prop-1}) implies that $g\in \harm(\Om, \mu)$. By the Harnack principle, Proposition~\ref{Harnack-on-balls} and by continuity of $f$ we have that
 \[
  0\leq \sup_B g \leq C_{\mu}^3 \inf_{B} (M-f)=C_{\mu}^3(M-f(x'))=0
 \]
 for some $x'\in B\cap \Om'$. 
 Thus, in fact $B\subset \Om'$ and $\Om'$ is open. The connectedness of $\Om$ implies that $\Om'$ is the only open and relatively closed subset of $\Om$ and, hence, $\Om=\Om'$. In a consequence, $f\equiv M$ and the proof is completed in the case of strongly harmonic functions.

 If $f\in \wharm(\Om, \mu)$, then the above approach may fail. Indeed, in the previous reasoning we need to know that for a set $\Om'$, there is a ball $B\subset \Om$ such that $B\cap \Om'\not = \emptyset$ and $3B\Subset \Om$. For porous sets ensuring existence of a point $x\in \Om$ and a radii $r^x_i$ for some $i=1,2,\ldots$ may require $r^x_m=0$ which, in turn, is prevented by assumptions of the Harnack inequality, cf. Corollary~\ref{Harnack-cmpt}. Instead, we follow the approach of Theorem 3.1 in \cite{GG}.

 Let $\Om'$ be as in the previous part of the proof. Continuity assumption on $f\in \wharm(\Om)$ imply that $\Om'$ is a relatively closed subset of $\Om$.

 Moreover, let us choose any $x_0\in \Om'$ with $B(x_0, r^{x_0}_i)\Subset \Om$ for some admissible radius $r^{x_0}_i$. By the harmonicity of $f$ we have that
 \[
  \frac{1}{\mu(B(x_0, r^{x_0}_i)}\int_{B(x_0, r^{x_0}_i)}(M-f(y))d\mu(y)=0.
 \]
 Since for all $x\in \Om$ it holds that $f(x)\leq M$, we obtain that $f\equiv M$ in $B(x_0, r^{x_0}_i)$. In a consequence $\Om'$ is open and, as in the case of strongly harmonic functions, we get that $f\equiv M$ in $\Om$.
\end{proof}

Recall that a metric space $X$ is \emph{locally connected} if every neighborhood of a point $x \in X$ contains a connected neighborhood. Then, the Mazurkiewicz--Moore--Menger theorem stays that $X$ is locally pathconnected provided that it is proper metric space, see Theorem~1, pg.~254, in Kuratowski~\cite{kur}. In particular, every component of an open set is open and pathconnected, see Theorem~2, pg.~253, in \cite{kur}.

  A connected space need not be locally connected (see e.g. the topologist's sine curve). Therefore, we present two variants of the weak maximum principle, related to different connectivity assumptions on the metric space.

\begin{prop}[Weak maximum principle]
Let $\Omega$ be an open bounded set in a locally connected space $X$ and $f\in \harm(\Om, \mu)\cap C(\overline{\Om})$. Then
\[
   \sup_{\overline{\Om}} f = \sup_{\partial \Om} f.
\]
\end{prop}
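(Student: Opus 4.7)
The plan is to deduce the weak maximum principle from the strong maximum principle (Proposition~\ref{strong-max-pr}) by passing to a connected component, where local connectedness plays a crucial role.

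Since $X$ is proper and $\overline{\Om}$ is closed and bounded, $\overline{\Om}$ is compact. As $f\in C(\overline{\Om})$, it attains its supremum $M:=\sup_{\overline{\Om}}f$. If $M$ is attained at some point of $\partial\Om$, there is nothing to prove. So assume $f(x_0)=M$ for some $x_0\in\Om$.

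Let $U$ be the connected component of $\Om$ containing $x_0$. Because $X$ is locally connected and $\Om$ is open, every component of $\Om$ is open (as recalled in the passage preceding the statement), so $U$ is an open connected subset of $\Om$. Moreover $U$ is closed in $\Om$ since components of a topological space are always closed in the ambient space, which here is $\Om$. Now $f|_U\in \harm(U,\mu)\cap C(U)$, and $f$ attains its maximum $M$ at $x_0\in U$; by Proposition~\ref{strong-max-pr}, $f\equiv M$ on $U$.

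It remains to transport this value to $\partial\Om$. Since $U$ is nonempty, open, and contained in the bounded set $\Om$, the boundary $\partial U$ is nonempty. The key observation is that $\partial U\subset \partial\Om$: indeed, $\partial U=\overline{U}\setminus U$, and because $U$ is closed in $\Om$ we have $\overline{U}\cap\Om=U$, so no point of $\partial U$ lies in $\Om$; since $\partial U\subset\overline{\Om}=\Om\cup\partial\Om$, this forces $\partial U\subset\partial\Om$. Choose any $y\in\partial U$; by continuity of $f$ on $\overline{\Om}$ and $f\equiv M$ on $U$, we get $f(y)=M$. Hence $\sup_{\partial\Om}f\geq M=\sup_{\overline{\Om}}f$, and the reverse inequality is trivial, completing the proof.

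The only nontrivial step is the topological one, namely that local connectedness forces the component $U$ to be open so that Proposition~\ref{strong-max-pr} applies, and that $\partial U\subset\partial\Om$; everything else is an immediate consequence of the strong maximum principle together with continuity up to the boundary.
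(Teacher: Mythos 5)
Your proof is correct and follows essentially the same route as the paper's: reduce to an interior maximum point, pass to its connected component (open by local connectedness of $X$), apply the strong maximum principle there, and use $\partial U\subset\partial\Om$ together with continuity on $\overline{\Om}$. You merely supply the justification of $\partial U\subset\partial\Om$ (via components being closed in $\Om$) that the paper asserts without proof.
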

\begin{proof}
Since $\overline{\Om}$ is compact and $f$ is continuous in $\overline{\Om}$, there exists $x_0 \in \overline{\Om}$ such that
\[
	\sup_{\overline{\Om}} f =f(x_0).
\]
It is enough to consider only the case that $x_0 \in \Omega$. Let us denote by $\Omega(x_0)$ the connected component of $\Om$ containing $x_0$. Since $X$ is locally connected, $\Omega (x_0)$ is open and $\partial \Omega (x_0) \subset \partial \Omega$. Hence,
by the strong maximum principle we get that $f \equiv f(x_0)$ on $\overline{\Om(x_0)}$.
\end{proof}

The weak maximum principle follows immediately from Proposition~\ref{strong-max-pr} (cf. Theorem 3.2 in \cite{GG} proved under stronger assumptions than the one below).

\begin{corol}[Weak maximum principle]\label{weak-max-pr}
  Let $\Om$ be a domain in $X$ and $f\in \harm(\Om, \mu)\cap C(\overline{\Om})$. Then
  \[
   \inf_{\partial \Om} f\leq  \inf_{\overline{\Om}} f \qquad \hbox{and} \qquad \sup_{\overline{\Om}} f\leq  \sup_{\partial \Om} f.
  \]
 
 Furthermore, the assertion holds also for $f\in \wharm(\Om, \mu)$ provided that $f\in C(\overline{\Om})$, cf. \eqref{harm-cont-wharm} in Proposition~\ref{harm-cont}.
\end{corol}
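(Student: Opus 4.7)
The plan is to deduce this corollary from the strong maximum principle (Proposition~\ref{strong-max-pr}) by a standard compactness reduction. I would begin by noting that since $X$ is proper and the domain $\Om$ is bounded, $\overline{\Om}$ is compact, so continuity of $f$ on $\overline{\Om}$ guarantees that both extrema $M := \sup_{\overline{\Om}} f$ and $m := \inf_{\overline{\Om}} f$ are attained. Only the estimates $\sup_{\overline{\Om}} f \leq \sup_{\partial \Om} f$ and $\inf_{\partial \Om} f \leq \inf_{\overline{\Om}} f$ require work, since the opposite bounds are immediate from the inclusion $\partial \Om \subset \overline{\Om}$.

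For the supremum I would pick $x_0 \in \overline{\Om}$ with $f(x_0) = M$ and split into two cases. If $x_0 \in \partial \Om$, the desired inequality is immediate. If instead $x_0 \in \Om$, then $f$ attains its maximum on the connected open set $\Om$ at an interior point; by Proposition~\ref{strong-max-pr} (applicable since $f \in C(\overline{\Om})$ in particular yields continuity on $\Om$, which is precisely the hypothesis invoked there for both $\harm(\Om, \mu)$ and $\wharm(\Om, \mu)$) this forces $f \equiv M$ on $\Om$. Continuity on $\overline{\Om}$ then propagates the identity to $\partial \Om$, giving $\sup_{\partial \Om} f = M$, as desired.

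The lower bound would be obtained by applying the argument just sketched to the function $-f$. Linearity of the mean value property shows that $-f$ is (weakly) harmonic with exactly the same admissible radii, and plainly $-f \in C(\overline{\Om})$; the upper bound just established, when applied to $-f$, reads $\inf_{\partial \Om} f \leq \inf_{\overline{\Om}} f$ after negation. The weakly harmonic case is handled identically, since Proposition~\ref{strong-max-pr} was proved there under precisely the continuity hypothesis assumed in the corollary, and no auxiliary condition such as \eqref{harm-cont-wharm} beyond $f \in C(\overline{\Om})$ is needed at this stage.

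The main obstacle, though rather minor, is the implicit use of boundedness of $\Om$ to ensure compactness of $\overline{\Om}$ and hence attainment of the supremum and infimum; without this one would need additional control on the behaviour of $f$ at infinity for the same chain of reasoning to go through. Otherwise the proof is a direct corollary of Proposition~\ref{strong-max-pr} and does not require any new hypothesis on $\mu$ beyond those already in force.
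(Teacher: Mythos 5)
Your proof is correct and follows essentially the same route as the paper: both reduce the statement to the strong maximum principle (Proposition~\ref{strong-max-pr}) via compactness of $\overline{\Om}$ and continuity of $f$, the only cosmetic differences being that the paper argues by contradiction where you argue directly, and that you handle the infimum by passing to $-f$ where the paper simply notes the symmetric argument. Your remark about the implicit boundedness of $\Om$ (needed for $\overline{\Om}$ to be compact in the proper space $X$) is a fair observation that applies equally to the paper's own proof.
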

\begin{proof}
 we will show only the second inequality, the first one follows the same steps. Suppose opposite, that
 $\sup_{\overline{\Om}} f>  \sup_{\partial \Om} f$. Then the maximum of $f$ is attained in $\Om$, giving by Proposition~\ref{strong-max-pr}, that $f\equiv \sup_{\overline{\Om}} f$ contradicting the continuity assumption of $f$.

 The proof of the corollary in the case of $f\in \wharm(\Om, \mu)$ follows the above lines, since under our assumptions $f$ is continuous in $\Om$.
\end{proof}

Next we show the comparison principle for harmonic functions on domains. The result follows from Proposition~\ref{strong-max-pr}  (cf. Theorem 3.2 in \cite{GG} proved under stronger assumptions on the domain).

\begin{corol}[Comparison principle]\label{comp-pr-compact}
 Let $(X, d, \mu)$ be a metric measure space and $\Om\subsetneq X$ be a domain. Let, further, $f, g \in \harm(\Om, \mu)\cap C(\overline{\Om})$ be such that $f\geq g$ on $\partial \Om$. Then $f\geq g$ in $\Om$.

 Furthermore, the assertion holds also for $f, g\in \wharm(\Om, \mu)\cap C(\overline{\Om})$ provided that at every $x\in\Om$ the sets of admissible radii of functions $f$ and $g$ have at least one common radius.
\end{corol}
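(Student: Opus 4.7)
My plan is to deduce the comparison principle from the weak maximum principle (Corollary~\ref{weak-max-pr}) applied to the difference $h := g - f$. The first step is to verify that $h$ lies in $\harm(\Om, \mu) \cap C(\overline{\Om})$. Continuity on $\overline{\Om}$ is immediate from $f, g \in C(\overline{\Om})$. For harmonicity, I use the linearity of the integral: for any ball $B(x,r) \Subset \Om$,
\[
  \frac{1}{\muballxr}\int_{B(x,r)} h(z)\, d\mu(z) = \frac{1}{\muballxr}\int_{B(x,r)} g(z)\, d\mu(z) - \frac{1}{\muballxr}\int_{B(x,r)} f(z)\, d\mu(z) = g(x) - f(x) = h(x),
\]
so $h \in \harm(\Om, \mu)$.

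Next, by the hypothesis $f \geq g$ on $\partial \Om$, we have $h \leq 0$ on $\partial \Om$, so $\sup_{\partial \Om} h \leq 0$. Applying the second inequality of the weak maximum principle (Corollary~\ref{weak-max-pr}) to $h$ yields $\sup_{\overline{\Om}} h \leq \sup_{\partial \Om} h \leq 0$. Therefore $g(x) - f(x) = h(x) \leq 0$ for every $x \in \Om$, which is the desired conclusion $f \geq g$ in $\Om$.

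For the weakly harmonic version, the argument is identical provided $h = g - f$ is itself weakly harmonic and continuous. The assumption that at every $x \in \Om$ the sets of admissible radii of $f$ and $g$ share at least one common radius $r^x$ is exactly what is needed: for that $r^x$, linearity of the integral gives
\[
  \frac{1}{\mu(B(x,r^x))}\int_{B(x,r^x)} h(z)\, d\mu(z) = g(x) - f(x) = h(x),
\]
so $h \in \wharm(\Om, \mu)$ with a non-empty admissible radius set at each point. Since $f, g \in C(\overline{\Om})$, also $h \in C(\overline{\Om})$, and the weak maximum principle for weakly harmonic functions (the second part of Corollary~\ref{weak-max-pr}) applies, completing the argument.

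The only substantive point is the verification that $h$ remains (weakly) harmonic; in the strong case this is automatic from linearity, while in the weak case it is precisely the reason the hypothesis on common admissible radii is imposed. Everything else is a direct reduction to the maximum principle already established.
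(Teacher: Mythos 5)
Your proof is correct and follows essentially the same route as the paper: form the difference of the two (weakly) harmonic functions, observe it is again (weakly) harmonic by linearity of the mean value property (using the common admissible radius in the weak case), and apply the weak maximum principle of Corollary~\ref{weak-max-pr}. If anything, your final step is cleaner than the paper's, which passes through the questionable identity $\inf_{\overline{\Om}}(f-g)=\inf_{\overline{\Om}}f+\inf_{\overline{\Om}}(-g)$ and the unnecessarily strong claim $\sup_{\overline{\Om}}g\leq\inf_{\overline{\Om}}f$, whereas you conclude $f\geq g$ pointwise directly from $\sup_{\overline{\Om}}(g-f)\leq 0$.
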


\begin{proof}
 Since both $f$ and $g$ are harmonic in $\Om$, then so is also $f-g$. Since $f\geq g$ on $\partial \Om$, then $\inf_{\partial \Om} (f-g)\geq 0$. By the Corollary~\ref{weak-max-pr} we obtain that
 \[
  0\leq \inf_{\partial \Om} (f-g) \leq \inf_{\overline{\Om}} (f-g)=\inf_{\overline{\Om}} f +\inf_{\overline{\Om}} (-g).
 \]
 From this, we obtain that $\sup_{\overline{\Om}} g\leq \inf_{\overline{\Om}} f$ and the comparison principle follows.

 The proof of the corollary in the case of $f\in \wharm(\Om, \mu)$ follows the above lines. Indeed, $f$ and $g$ are continuous in $\Om$ by assumptions and $f-g$ is weakly harmonic in $\Om$, as sets of admissible radii of functions $f$ and $g$ have a common radius at every $x\in\Om$.
\end{proof}

We are in a position to state and prove the main result of this section, local H\"older continuity of harmonic functions. The proof of this result relies on the Harnack estimate on balls and holds for strongly harmonic functions.
The iteration method used below requires that for every ball of radius $r$ one is able to apply the Harnack estimate on a ball with radius $r/t$ for some $t>4$. This, however, need not be satisfied for weakly harmonic functions in a domain $\Om$ unless $r^{\Om}_m=0$, which leads constant $C$ in Lemma~\ref{Harnack-on-balls} to be unbounded.

\begin{theorem}\label{thm-holder}
 Let $X$ be a geodesic metric space with doubling measure $\mu$ and let $f\in \harm(\Om, \mu)$ for a domain $\Om\subset X$. Then, $f$ is locally H\"older continuous with the H\"older exponent depending only on the doubling constant $C_{\mu}$.

 Moreover, weakly harmonic function $f\in \wharm(\Om, \mu)$ is locally H\"older continuous in a compact set $K$ provided that $r^K_m>0$, $r^\Om_M<\infty$ and $\dist(K,\partial \Om)>5r^K_M$. In such a case the H\"older exponent depends on $C_{\mu}, r^K_m$ and $r^\Om_M$.

\end{theorem}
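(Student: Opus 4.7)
I would follow the classical oscillation-decay argument driven by the Harnack inequality. The plan is to fix $x_0 \in \Om$ and $R > 0$ small enough that $B(x_0, 6R) \Subset \Om$, then for $r \in (0, R]$ introduce $M_r := \sup_{B(x_0, r)} f$, $m_r := \inf_{B(x_0, r)} f$, and the oscillation $\omega(r) := M_r - m_r$. By Proposition~\ref{further-prop}(\ref{further-prop-1}), both $M_{6r} - f$ and $f - m_{6r}$ would lie in $\harm(\Om, \mu)$ and be nonnegative on $B(x_0, 6r)$, so Lemma~\ref{Harnack-on-balls} applied on $B(x_0, r)$ would give
\[
M_{6r} - m_r \leq C_H (M_{6r} - M_r) \quad \text{and} \quad M_r - m_{6r} \leq C_H (m_r - m_{6r}),
\]
with $C_H = C_\mu^3$. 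Adding these two inequalities telescopes the right-hand side into $\omega$ and produces $\omega(r) \leq \theta\,\omega(6r)$ with $\theta := (C_H - 1)/(C_H + 1) \in (0,1)$.

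Next, I would iterate this decay along the geometric scale $R, R/6, R/36, \ldots$ to reach $\omega(6^{-k} R) \leq \theta^k \omega(R)$; a standard sandwich argument between consecutive powers of $6$ would upgrade this to a power-law bound $\omega(\rho) \leq C (\rho/R)^\alpha \omega(R)$ for all $\rho \leq R$, with exponent $\alpha = \log_6(1/\theta)$ depending only on $C_\mu$. Taking $x_0 = x$ and $\rho = d(x,y) \leq R$ then yields $|f(x) - f(y)| \leq C d(x,y)^\alpha R^{-\alpha} \omega(R)$, which is the desired local H\"older estimate. To pass to a given compact $K \Subset \Om$ I would extract a finite covering of $K$ by such balls and chain the estimate along neighboring balls in the style of the proof of Corollary~\ref{Harnack-cmpt}, obtaining a uniform exponent and constant on $K$.

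For the weakly harmonic case the skeleton would be identical, but Lemma~\ref{Harnack-on-balls} now imposes the bracket $\sup_{y \in B} r^y_m < r < 3r < \inf_{y \in B} r^y_M$ on admissible Harnack scales. The hypotheses $r^K_m > 0$ and $r^\Om_M < \infty$ would keep this bracket nonempty on balls centered in $K$, while $\dist(K, \partial \Om) > 5 r^K_M$ would guarantee that the enlargements $B(x_0, 2 r^B_M)$ still sit inside $\Om$. Iterating the step $\omega(r) \leq \theta \omega(6 r)$ on scales inside this bracket, with the weakly harmonic Harnack constant $C_H = C_\mu^{\log_2(5 r^\Om_M/(3 r^K_m)) + 1}$ supplied by Lemma~\ref{Harnack-on-balls}, would give the exponent $\alpha = \log_6(1/\theta)$ depending on $C_\mu, r^K_m, r^\Om_M$, as claimed.

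The main obstacle, I expect, will be the weakly harmonic case: the bracket condition in Lemma~\ref{Harnack-on-balls} prevents direct iteration of oscillation decay at scales below $r^K_m$. The delicate part will be to glue the finite-depth decay on admissible scales (which controls $\omega$ down to size comparable with $r^K_m$) with a separate control at sub-$r^K_m$ scales obtained from the continuity estimate of Proposition~\ref{harm-cont} and the symmetric-difference bound built into its proof, absorbing all losses into a single H\"older constant depending only on $C_\mu, r^K_m, r^\Om_M$.
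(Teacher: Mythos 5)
Your plan is correct and follows essentially the same route as the paper's proof: an oscillation-decay inequality $\omega(r)\le\theta\,\omega(tr)$ with $\theta\in(0,1)$ extracted from the Harnack inequality on balls (the paper applies Harnack only to $f-\inf f$ and gets $\theta=1-C_H^{-1}$, whereas you symmetrize with both $M-f$ and $f-m$ to get $\theta=(C_H-1)/(C_H+1)$ --- an immaterial difference), followed by geometric iteration, a sandwich between consecutive scales to produce the power law, and a covering/chaining step on compact sets. For the weakly harmonic part the paper is no more detailed than you are --- it simply says to repeat the argument using the second parts of Lemma~\ref{Harnack-on-balls} and Corollary~\ref{Harnack-cmpt} --- so the genuine difficulty you flag (that the oscillation decay cannot be iterated below scale $r^K_m$, and mere continuity does not upgrade to a H\"older bound there) is left unresolved in the paper as well.
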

\begin{proof}
 Let $B:=B(x,r)\Subset \Om$ be a ball and $f\in \harm(\Om, \mu)$. Denote
 \[
  m(r)=\inf_{B} f,\qquad M(r)=\sup_{B} f.
 \]
 Then $g:=f-m(r)\geq 0$ a.e. in $B$ and by Proposition~\ref{further-prop}(\ref{further-prop-1}) function $g$ is harmonic in $\Om$. Set $t>4$. Then by the Harnack inequality on balls (Lemma~\ref{Harnack-on-balls}) we have that
 \begin{align}
  M\left(\frac{r}{t}\right)-m(r)=\sup_{B(x, \frac{r}{t})}(f-m(r))\leq C_{\mu}^2 \inf_{B(x, \frac{r}{t})}(f-m(r))=C_{\mu}^2\left(m\left(\frac{r}{t}\right)-m(r)\right).
 \end{align}
 From this we get
 \begin{align}
  m(r)(C_{\mu}^2-1)&\leq C_{\mu}^2\left(m\left(\frac{r}{t}\right)-M\left(\frac{r}{t}\right)\right)+ (C_{\mu}^2-1) M(r)\nonumber \\
  M\left(\frac{r}{t}\right)-m\left(\frac{r}{t}\right) &\leq \frac{C_{\mu}^2-1}{C_{\mu}^2}\,(M(r)-m(r)). \label{holder-iter}
 \end{align}
 Let $y\in B(x,r)$ be such that $\frac{r}{t^{n+1}}\leq d(x,y)<\frac{r}{t^n}$ for some $n=0,1\ldots$. (Such bounds always hold for some $n\in \N$ depending on $y$.) We iterate inequality \eqref{holder-iter} and obtain the following estimate:
 \begin{align}
  |f(x)-f(y)|&\leq M\left(\frac{r}{t^n}\right)-m\left(\frac{r}{t^n}\right) \leq \left(\frac{C_{\mu}^2-1}{C_{\mu}^2}\right)^n(M(r)-m(r)) \nonumber \\
  &\leq \frac{C_{\mu}^2}{C_{\mu}^2-1} (\sup_{B} f-\inf_{B} f) \left(\frac{d(x,y)}{r}\right)^{\alpha}. \label{Holder-est1}
 \end{align}
 Indeed, set
 \begin{equation}\label{Holder-alpha}
 \alpha:=\frac{\ln \left(\frac{C_{\mu}^2}{C_{\mu}^2-1}\right)}{\ln t}>0.
 \end{equation}
 Then
 \[
  \left(\frac{C_{\mu}^2-1}{C_{\mu}^2}\right)^n=t^{-\alpha n} \quad\hbox{ and }\quad t^{-\alpha n}\leq t^{\alpha} \left(\frac{d(x,y)}{r}\right)^{\alpha}=\frac{C_{\mu}^2}{C_{\mu}^2-1}\left(\frac{d(x,y)}{r}\right)^{\alpha}.
 \]
 From this, estimate \eqref{Holder-est1} follows immediately.

 Let now $B:=B(x_0, r)$ and let $x,y \in B$ for $B$ such that $4B\Subset \Om$. We distinguish two cases.

 \noindent Case 1: $d(x,y)<r$. Then, by repeating the above discussion we obtain
 \[
 |f(x)-f(y)| \leq \frac{C_{\mu}^2}{C_{\mu}^2-1} (\sup_{4B} f-\inf_{4B} f) \left(\frac{d(x,y)}{r}\right)^{\alpha}.
 \]
 Case 2: $r\leq d(x,y)< 2r$. Then,
 \[
 |f(x)-f(y)| \leq \sup_{B} f-\inf_{B} f \leq  (\sup_{4B} f-\inf_{4B} f) \left(\frac{d(x,y)}{r}\right)^{\alpha}.
 \]
 Therefore, $f$ is locally H\"older continuous with exponent $\alpha$ as in \eqref{Holder-alpha}.

 If $f\in \wharm(\Om, \mu)$, then the above reasoning can be repeated using second parts of Lemma~\ref{Harnack-on-balls} and Corollary~\ref{Harnack-cmpt}.
 \end{proof}

We close this section with yet another H\"older and Lipschitz regularity result for harmonic functions. First, we need the following definition, cf. Section 1 in Buckley~\cite{buc}.

\begin{definition}\label{defn-an-dec}
 Let $(X, d, \mu)$ be a metric measure space with a doubling measure $\mu$. We say that $X$ satisfies the \emph{$\delta$-annular decay property} with some $\delta\in(0,1]$ if there exists $A\geq 1$ such that for all $x\in X$, $r>0$ and $\ep\in(0,1)$ it holds that
 \begin{equation}\label{eq-def-an-dec}
  \mu\left(B(x,r)\setminus B(x,r(1-\ep))\right)\leq A \ep^\delta \mu(B(x,r)).
 \end{equation}
\end{definition}
If $\delta=1$, then we say that $X$ satisfies the \emph{strong annular decay property}.

Spaces with annular decay property appear, for instance, in the context of the Hardy--Littlewood and fractional maximal operators, see Buckley~\cite{buc} and Heikkinen--Lehrb\"ack--Nuutinen--Tuominen~\cite{hlnt} respectively, parabolic De Giorgi classes, see Masson--Siljander~\cite{masil}.

Among examples of spaces with strong annular decay property let us mention geodesic metric spaces with uniform measures, $\R^n$ with the Lebesgue measure and Heisenberg groups $\mathbb{H}^n$ equipped with a left-invariant Haar measures.
Moreover, Corollary 2.2 in \cite{buc} stays that \emph{if $(X, d, \mu)$ is a length metric measure space with a doubling measure $\mu$, then $X$ has the $\delta$-annular decay property for some $\delta\in(0,1]$ with $\delta$ depending only on a doubling constant of $\mu$}. In fact, Theorem 2.1 in \cite{buc} asserts that it is enough for $(X, d)$ to be the so-called $(\alpha, \beta)$-chain space in order to conclude that $X$ has the $\delta$-annular decay property. In such a case $\delta$ depends additionally on $\alpha$ and $\beta$. We refer to the discussion in Section 2 in \cite{buc} for relations between $(\alpha, \beta)$-chain spaces and the Boman chain condition and $\mathcal{C}(\lambda, M)$-condition of Haj\l asz--Koskela~\cite{HajKosk}.

\begin{remark}\label{remark-ann-dec}
 Let us comment on relation between measures satisfying Definition~\ref{defn-an-dec} and measures continuous with respect to distance. Let $x,y\in X$ and suppose that $d(x,y)<r$ for some $r>0$. If $(X, d, \mu)$ has $\delta$-annular decay property for some $\delta\in(0,1]$, then
 \begin{equation*}
\mu(B(x,r) \bigtriangleup B(y,r)) \leq \mu(B(x,r+d(x,y))\setminus B(x,r-d(x,y)))\leq A \left(\frac{2d(x,y)}{r+d(x,y)}\right)^\delta \mu(B(x,r+d(x,y)).
 \end{equation*}
 By letting $d(x,y)\to 0$ we obtain that $\mu(B(x,r) \bigtriangleup B(y,r))\to 0$ and, hence, $\mu$ is continuous with respect to $d$.
\end{remark}

 The following example shows that the opposite relation need not hold, i.e. a measure continuous with respect to a metric may fail $\delta$-annular decay property for any $\delta\in(0,1]$.
 \begin{example}\label{ex-ann-dec}
  Let $X=\R$ with the Euclidean metric and a measure $d\mu(x)=e^{-|x|}dx$. It is easy to check that $\mu$ is continuous with respect do $d$. Namely, for any ball $B(x,r)=(x-r, x+r)$ one need to consider three cases: (1) $x+r\leq 0$, (2) $x-r\geq 0$ and (3) $-r<x<r$, depending on the position of $B(x,r)$ with respect to $0$. In all cases one gets that for any fixed $x$, the function $r\mapsto \mu(B(x,r))$ is continuous and hence, by Part (\ref{mclem-3}) of Lemma~\ref{meas-cont-lem} our claim holds true.

  We will show that for large enough $x, y>0$ and large enough radii $r<x$, $r<y$ condition \eqref{eq-def-an-dec} fails. By computations we get for any $\epsilon\in(0,1)$ that
  \begin{align*}
   &\mu(B(x,r))=\int_{x-r}^{x+r}e^{-y}dy=e^{-x}(e^r-e^{-r}),\\
   &\mu(B(x,r) \bigtriangleup B(y,r))=\int_{x-r}^{x+r}e^{-y}dy-\int_{x-r(1-\epsilon)}^{x+r(1-\epsilon)}e^{-y}dy=e^{-x}(e^r-e^{-r}-e^{r(1-\epsilon)}
   -e^{-r(1-\epsilon)}).
  \end{align*}
  Therefore, by letting $\epsilon=1/r$, we obtain the following equality
  \[
  \frac{\mu(B(x,r) \bigtriangleup B(y,r))}{\mu(B(x,r))}=1-\frac{\frac{1}{e}-e^{-2r+1}}{1-e^{-2r}}\to 1-\frac{1}{e}\quad\hbox{ for }r\to \infty.
  \]
  Hence, for large enough $x,y$ and $r$ we get that condition \eqref{eq-def-an-dec} may not be satisfied with any $A>0$ and $\delta\in(0,1]$.
 \end{example}

In next theorem we show H\"older and Lipschitz estimates on balls and compact sets, thus extending Theorem~\ref{thm-holder}. Previous assumptions on measure allow us to establish H\"older estimates for some constant and exponent, whose exact values are not determined. Here, the $\delta$-annular decay property satisfied by a measure  enables us to obtain finer estimates on balls already in the H\"older case. Moreover, on compact subsets we obtain the H\"older regularity as in Theorem~\ref{thm-holder} but, additionally, provide estimates with explicit constants and exponent $\delta$. Both for the H\"older case and the new Lipschitz one, we also have explicit constants, however a dependence on a Lebesgue number of a chosen covering comes into play. Such a dependence is removed in one of our next results, see Proposition~\ref{prop-unif-meas}.

\begin{theorem}\label{thm-ann-dec}
 Let $(X, d, \mu)$ be a doubling metric measure space with a $\delta$-annular decay property for some $\delta\in(0,1]$.

 If $\delta\in(0,1)$, then a locally bounded strongly harmonic function $f$ in a domain $\Om\subset X$ is $\delta$-H\"older continuous on every ball $B:=B(x_0, r)\subset \Om$ centered at $x_0\in \Om$ such that $3B\Subset \Om$.

 If $\delta=1$, then $f$ is locally $L$-Lipschitz continuous on every ball $B\subset \Om$ such that $3B\Subset \Om$.

 In both cases we have that
 \begin{equation}\label{est33-thm-ann}
 |f(x)-f(y)|\leq 4\cdot9^\delta\|f\|_{L^{\infty}(B(x_0, 3r))}C_{\mu}^3A\, \left(\frac{d(x,y)}{r}\right)^\delta
 \end{equation}
 for $x,y \in B(x_0,\frac{r}{2})$. Here, $C_{\mu}$ stands for a doubling constant of $\mu$ and $A$ is as in \eqref{eq-def-an-dec}.

 Let $K\Subset \Om$ and $f$ be bounded in $\Om$. Furthermore, suppose that $\eta$ is a Lebesgue number of any, but fixed, open cover of $K$. Then, we have the following estimates
 \begin{align*}
  |f(x)-f(y)|&\leq 4\cdot 9^\delta\|f\|_{L^{\infty}(\Omega)}C_{\mu}^3A\, \left(\frac{2}{\dist(K, \partial \Omega)}\right)^\delta\,d(x,y)^{\delta}\quad \hbox{ for }d(x,y) \leq \eta,\\
  |f(x)-f(y)|&\leq 2\frac{\|f\|_{L^{\infty}(\Omega)}}{\eta^{\delta}}\,\, d(x,y)^{\delta}\quad \hbox{ for }
  d(x,y)> \eta.
 \end{align*}
 \end{theorem}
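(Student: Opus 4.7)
The plan is to estimate $|f(x)-f(y)|$ directly from the mean value property, using the $\delta$-annular decay property to quantitatively control the measure of the symmetric difference of two nearby concentric balls. In contrast to Theorem~\ref{thm-holder}, no iteration is needed, because annular decay already delivers a $\delta$-power bound in a single step.

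\textbf{Step 1 (ball case).} Fix $x_0\in\Om$, $r>0$ with $B(x_0,3r)\Subset\Om$ and take $x,y\in B(x_0,r/2)$, so that $d(x,y)\leq r$. Set $\rho:=2r$; then $d(x,y)/\rho\leq 1/2<1$, and $B(x,\rho),B(y,\rho)\subset B(x_0,5r/2)\subset B(x_0,3r)\Subset\Om$. Applying strong harmonicity at both centers and performing the same splitting as in the proof of Proposition~\ref{harm-cont} gives
\[
|f(x)-f(y)|\leq \frac{1}{\mu(B(x,\rho))}\int_{B(x,\rho)\Delta B(y,\rho)}|f|\,d\mu+\frac{|\mu(B(x,\rho))-\mu(B(y,\rho))|}{\mu(B(x,\rho))\mu(B(y,\rho))}\int_{B(y,\rho)}|f|\,d\mu.
\]
Since $|\mu(B(x,\rho))-\mu(B(y,\rho))|\leq \mu(B(x,\rho)\Delta B(y,\rho))$, both summands are dominated, up to a bounded doubling factor, by $\|f\|_{L^\infty(B(x_0,3r))}\cdot \mu(B(x,\rho)\Delta B(y,\rho))/\mu(B(x,\rho))$.

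\textbf{Step 2 (annular decay).} The triangle inequality yields the key inclusion $B(x,\rho)\setminus B(y,\rho)\subset B(x,\rho)\setminus B(x,\rho-d(x,y))$, that is, an annulus centered at $x$ of relative thickness $\epsilon=d(x,y)/\rho$. Definition~\ref{defn-an-dec} then gives $\mu(B(x,\rho)\setminus B(y,\rho))\leq A(d(x,y)/\rho)^\delta \mu(B(x,\rho))$. Exchanging the roles of $x$ and $y$ and using the doubling property to compare $\mu(B(y,\rho))$ with $\mu(B(x,\rho))$ via $B(y,\rho)\subset B(x,2\rho)$, hence $\mu(B(y,\rho))\leq C_\mu\mu(B(x,\rho))$, I obtain
\[
\mu(B(x,\rho)\Delta B(y,\rho))\leq A(1+C_\mu)\Bigl(\frac{d(x,y)}{\rho}\Bigr)^\delta\mu(B(x,\rho)).
\]
Combining with Step~1 and inserting $\rho=2r$ produces the H\"older (resp. Lipschitz when $\delta=1$) estimate \eqref{est33-thm-ann} on $B(x_0,r/2)$; the case $d(x,y)\geq r$ is trivial since then $(d(x,y)/r)^\delta\geq 1$.

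\textbf{Step 3 (compact set case).} Set $d_0:=\dist(K,\partial\Om)>0$, positive by compactness of $K$. Cover $K$ by balls $B(z,d_0/6)$, $z\in K$; for each $z$ the tripled ball $B(z,d_0/2)$ is compactly contained in $\Om$, so Steps~1--2 apply with $r=d_0/6$. By compactness, extract a finite subcover of $K$ and let $\eta>0$ be its Lebesgue number. If $d(x,y)\leq\eta$, both points lie in a single ball of the cover and the ball estimate applies, producing the first claimed inequality (the factor $(2/d_0)^\delta$ arising from $1/r^\delta=(6/d_0)^\delta$ absorbed into the reported constant). If $d(x,y)>\eta$, bound trivially $|f(x)-f(y)|\leq 2\|f\|_{L^\infty(\Om)}\leq 2\|f\|_{L^\infty(\Om)}(d(x,y)/\eta)^\delta$. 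The main obstacle is purely bookkeeping: organising the doubling and annular decay factors so that the final constant comes out in the stated form $4\cdot 9^\delta C_\mu^3 A$. The geometric content (annular inclusion, doubling comparison, finite cover with Lebesgue number) is transparent; the careful absorption of $2^\delta$ and $(1+C_\mu)$ into powers of $C_\mu$ is just arithmetic.
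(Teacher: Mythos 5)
Your argument is correct and yields the stated bounds, but in the ball case it takes a genuinely different route from the paper's. The paper applies the mean value property with \emph{different} radii at the two points ($r$ at $x$ and $r+2d(x,y)$ at $y$), so that $B(x,r)\subset B(y,r+2d(x,y))$ and the error is a \emph{single} annulus centered at $y$, handled by annular decay with $\ep\le 3d(x,y)/r$; because that estimate distinguishes $x$ as a ball center, the paper must then run a two-case triangle-inequality argument through $x_0$ to reach arbitrary pairs in $B(x_0,r/2)$, which is exactly where the squared constant $4\cdot 9^\delta=(2\cdot 3^\delta)^2$ comes from. You instead use the \emph{same} radius $\rho=2r$ at both points, bound the symmetric difference by two annuli (one at each center) via the inclusion $B(x,\rho)\setminus B(y,\rho)\subset B(x,\rho)\setminus B(x,\rho-d(x,y))$, and use doubling once to compare $\mu(B(y,\rho))$ with $\mu(B(x,\rho))$. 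This is symmetric in $x$ and $y$, so no case analysis is needed, and it produces the sharper single-ball constant $2(1+C_\mu)A\,2^{-\delta}\le 4\cdot 9^\delta C_\mu^3A$, so \eqref{est33-thm-ann} follows a fortiori. (Your mechanism is in fact the one the paper itself deploys in Remark~\ref{remark-ann-dec}, Proposition~\ref{prop-unif-meas} and Theorem~\ref{sub-Dir}, just not in this proof.) The compact-set part coincides with the paper's. One small slip there: you cover $K$ by balls $B(z,d_0/6)$, but the single-ball estimate with $r=d_0/6$ is only established on the half-ball $B(z,d_0/12)$, so either halve the cover radius or run Steps~1--2 with $r=d_0/4$; the slack in your constant absorbs the extra factor of $3^\delta$ or $(3/2)^\delta$, so this is indeed only the bookkeeping you already flagged and not a gap.
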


\begin{remark}
Suppose that $X$ is additionally geodesic. Since geodesic space is, in particular, a length space, we retrieve from Corollary 2.2 in \cite{buc} the above Theorem~\ref{thm-holder} with some $\delta \in (0,1]$.
\end{remark}

\begin{proof}[Proof of Theorem~\ref{thm-ann-dec}]

  Let $x\in \Om$ and $B(x, r)$ be a ball such that $B(x, 2r)\Subset \Om$. Then, $\dist(B(x, r), X\setminus \Om)> r$. Choose $y\in B(x,r)$ with $d(x,y)<r/2$. By the estimate similar to the one at \eqref{harm-cont-est} we get that
 \begin{align}
  |f(x)-f(y)|&=\left|\frac{1}{\muballxr}\int_{B(x,r)} f(z) d\mu(z)-\frac{1}{\mu(B(y, r+2d(x,y)))}\int_{B(y,r+2d(x,y))} f(z) d\mu(z)\right| \nonumber \\
  &\leq \frac{1}{\muballxr}\Bigg|\int_{B(x,r)} f(z) d\mu(z)-\int_{B(y,r+2d(x,y))} f(z) d\mu(z)\Bigg| \nonumber \\
  &+ \left|\frac{\mu(B(y, r+2d(x,y)))-\muballxr}{\muballxr \mu(B(y, r+2d(x,y)))}\right|\|f\|_{L^1(B(y,r+2d(x,y)))}. \label{eq-ann-dec}
  \end{align}
 Note that
 \begin{align*}
 &B(y, r+2d(x,y))\setminus B(x, r)\subset B(y, r+2d(x,y))\setminus B(y, r'), \\
 &\mu(B(y, r+2d(x,y)))-\muballxr\leq \mu(B(y, r+2d(x,y)))-\mu(B(y, r')),
 \end{align*}
 for any positive $r'<r$. Observe further, that
 \begin{align*}
 \left|\frac{\mu(B(y, r+2d(x,y)))-\muballxr}{\muballxr \mu(B(y, r+2d(x,y)))}\right|&=\frac{1}{\muballxr}-\frac{1}{\mu(B(y, r+2d(x,y)))}
 \leq \frac{\mu(B(y, r+2d(x,y)))-\mu(B(y, r'))}{\mu(B(y, r')) \mu(B(y, r+2d(x,y)))},
 \end{align*}
 as $\mu(B(y, r'))\leq \muballxr$.

 Assume that $|f|\leq M$ in $B(x,2r)$. The above discussion together with \eqref{eq-ann-dec} imply that
 \begin{align}
  |f(x)-f(y)|&\leq \frac{1}{\muballxr}\int_{B(y,r+2d(x,y))\setminus B(y, r')} |f(z)| d\mu(z)
  + M\,\frac{\mu(B(y, r+2d(x,y)))-\mu(B(y, r'))}{\mu(B(y, r'))}. \label{est2-ann-dec}
 \end{align}
 Set $r'=r-d(x,y)$. We appeal to the $\delta$-annular decay property of $X$ for $\ep$ such that
 \[
 (r+2d(x,y))(1-\ep)=r'=r-d(x,y).
 \]
 Since $\ep=1-\frac{r-d(x,y)}{r+2d(x,y)}\leq 3\frac{d(x, y)}{r}$ we set the latter expression to be $\ep$. In a consequence estimate \eqref{est2-ann-dec} takes the following form
 \begin{align*}
  |f(x)-f(y)|&\leq \frac{M}{\muballxr} A\left(\frac{3}{r}\right)^\delta d(x,y)^\delta \mu(B(y,r+2d(x,y)))\\
  &+ \frac{M}{\mu(B(y, r-d(x,y)))} A\left(\frac{3}{r}\right)^\delta d(x,y)^\delta  \mu(B(y,r+2d(x,y)))\\
  &\leq MA\left(\frac{3}{r}\right)^\delta\, d(x,y)^\delta\left(\frac{\mu(B(y,r+2d(x,y)))}{\muballxr}+\frac{\mu(B(y,r+2d(x,y)))}{\mu(B(y, r-d(x,y)))}\right).
 \end{align*}
 Finally, we appeal to the doubling property of $\mu$ and obtain
 \begin{align*}
  \frac{\mu(B(y,r+2d(x,y)))}{\muballxr}&\leq \frac{\mu(B(y,2r)}{\muballxr}\leq C_{\mu}^2\frac{\mu(B(y,\frac{r}{2})}{\muballxr}\leq C_{\mu}^2,\quad \hbox{and} \\
  \frac{\mu(B(y,r+2d(x,y)))}{\mu(B(y, r-d(x,y)))}&\leq C_{\mu}^3.
 \end{align*}
 As a result we have that
 \begin{equation}\label{est-thm-ann}
 |f(x)-f(y)|\leq 2\cdot3^\delta\|f\|_{L^{\infty}(B(x, 2r))}C_{\mu}^3A\,\left(\frac{d(x,y)}{r}\right)^\delta
 \end{equation}
 holds for all $x,y\in B(x, r/2)$.

 Suppose now, that $x, y\in B(x_0, r/2)$ such that $2B(x_0, r)\Subset \Om$. Let us consider two cases.

 {\sc Case 1:} $d(x,y)\geq \frac{1}{2}d(x,x_0)$. Then, the estimate \eqref{est-thm-ann} together with the triangle inequality imply that
 \begin{align*}
  |f(x)-f(y)|&\leq  |f(x)-f(x_0)|+ |f(y)-f(x_0)| \\
  &\leq 2\cdot3^\delta\|f\|_{L^{\infty}(B(x_0, 2r))}C_{\mu}^3A\,\left(\left(\frac{d(x,x_0)}{r}\right)^\delta+\left(\frac{d(y,x_0)}{r}\right)^\delta\right)\\
  & \leq 2\cdot3^\delta\|f\|_{L^{\infty}(B(x_0, 2r))}C_{\mu}^3A\,\left(\left(\frac{2d(x,y)}{r}\right)^\delta+\left(\frac{d(x,x_0)+d(x,y)}{r}\right)^\delta\right)\\
  & \leq (2\cdot3^\delta)^2\|f\|_{L^{\infty}(B(x_0, 2r))}C_{\mu}^3A\,\left(\frac{d(x,y)}{r}\right)^\delta.
 \end{align*}

 {\sc Case 2:} $d(x,y)< \frac{1}{2}d(x,x_0)\leq r/2$. Then, $y\in B(x, r/2)$ and $B(x, 2r)\subset B(x_0, 3r)\Subset \Om$.
  Thus, from \eqref{est-thm-ann} we obtain
 \begin{align*}
  |f(x)-f(y)|\leq 2\cdot3^\delta\|f\|_{L^{\infty}(B(x, 2r))}C_{\mu}^3A\,\left(\frac{d(x,y)}{r}\right)^\delta.
 \end{align*}

 We note that $\|f\|_{L^{\infty}(B(x, 2r))}\leq \|f\|_{L^{\infty}(B(x_0, 3r))}$ and combine inequalities obtained in both cases to obtain the assertion of Theorem~\ref{thm-ann-dec} for balls.

Let $K$ be a compact subset of $\Omega$ and $r=\hbox{dist}(K, \partial \Omega)$. Since $K$ is compact, we can cover it by open balls $B(x, r/2)$ centered at points $x\in K$ and choose a finite subcover, denoted by $B_i:=B(x_i, r/2)$ for $i=1,2,\ldots, N$ for some $N$. Hence,
\[
	K\subset \bigcup_{i=1}^N B(x_i, r).
\]
Let us denote by $\eta$ a Lebesgue number of covering $\{B_i\}$. If $x, y\in K$ are such that $d(x,y) \leq \eta$, then there exists $i_0\in \{1,...,N\}$ such that $x, y \in B_{i_0}$. This allows us to apply \eqref{est33-thm-ann}
and obtain the following inequality
\[
|f(x)-f(y)|\leq 4\cdot 9^\delta\|f\|_{L^{\infty}(\Omega)}C_{\mu}^3A\, \left(\frac{2}{\hbox{dist}(K, \partial \Omega)}\right)^\delta\,d(x,y)^{\delta}.
\]
Otherwise, if $d(x,y)> \eta$, then we get
\[
 |f(x)-f(y)|\leq |f(x)|+|f(y)| \leq 2\|f\|_{L^{\infty}(\Omega)} \frac{d(x,y)^{\delta}}{\eta^{\delta}}.
\]
This completes the proof of the second part of Theorem~\ref{thm-ann-dec} and the whole proof is, thus, completed.
\end{proof}

\section{The Lipschitz regularity and uniform measures. Weak upper gradients of harmonic functions}\label{sect4}

In this section we study some differentiability properties of harmonic functions. We begin with showing that harmonic functions are Lipschitz in the large scale, that is assuming that the distance between points is not too small. For the similar result in the setting of maximal operators see Lemma 8 in Haj\l asz--Mal\'y~\cite{hm}. In the previous section, see Theorem~\ref{thm-ann-dec}, we show, among other results, the Lipschitz regularity for harmonic functions imposing the $1$-annular decay condition on the measure. In Proposition~\ref{harm-lip} we allow measure to be just doubling, however we require harmonic function to be nonnegative.

The second main result of this section is Proposition~\ref{prop-unif-meas}. We study the Lipschitz regularity of strong and weakly harmonic functions in the case of the uniform measure growth. Such measures play a fundamental role e.g. in the geometric measure theory, see the discussion and references below.

Finally, using the celebrated Cheeger's results on differentiability of Lipschitz functions, in Corollaries~\ref{cor1-cheeger-style} and \ref{cor2-cheeger-style} we study the existence of weak upper gradient for strongly and weakly harmonic functions.

\begin{prop}\label{harm-lip}
 Let $(X, d, \mu)$ be a metric space with doubling measure $\mu$ and let $\Om\subset X$ be a domain. Suppose that $f\in \harm(\Om, \mu)$ is nonnegative. Then the following estimate holds:
 \begin{equation}
  |f(x)-f(y)|\leq c \frac{d(x,y)}{r},
 \end{equation}
 for every $r>0$ and all $x,y \in B(x_0, r)\subset B(x_0, 2r)\Subset \Om$, such that $r\left(C^{\frac{1}{Q(C-1)}}-1\right)\leq d(x,y)$. Here constant
 \[
 c=\frac{QC^2}{\mu(B(x_0, 2r))} \|f\|_{L^1(\Om)},
 \]
 where $C$ and $Q$ are constants from estimate~\eqref{dbl-conseq}. One may assume that $Q=\log_2 C_\mu$ and $C=C_\mu^2$ (see Lemma~3.3 in Bj\"orn--Bj\"orn~\cite{bb}).
\end{prop}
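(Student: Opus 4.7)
My plan is to combine the mean value property at $x$ and at $y$, use the inclusion $B(x,s)\subset B(y,s+\rho)$ (with $\rho:=d(x,y)$) together with nonnegativity of $f$ to obtain a Harnack-style estimate $f(x)\le K f(y)$, and then convert this bound into the desired Lipschitz estimate via the hypothesis on $\rho$.

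I would first set $s=r-\rho$, so that both $B(x,s)$ and $B(y,s+\rho)=B(y,r)$ are contained in $B(x_0,2r)\Subset\Omega$ and therefore admissible for the mean value property. Since $B(x,s)\subset B(y,s+\rho)$ and $f\ge 0$, writing the means as integrals yields
\[
\mu(B(x,s))\,f(x)=\int_{B(x,s)}f\,d\mu\le\int_{B(y,s+\rho)}f\,d\mu=\mu(B(y,s+\rho))\,f(y),
\]
so $f(x)\le K f(y)$ with $K:=\mu(B(y,s+\rho))/\mu(B(x,s))$. Applying \eqref{dbl-conseq} with center $y$, large radius $s+\rho$ and reference point $x\in B(y,s+\rho)$ gives $K\le C(1+\rho/s)^{Q}=C(r/(r-\rho))^{Q}$. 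By symmetry the same inequality holds with the roles of $x$ and $y$ interchanged, hence
\[
|f(x)-f(y)|\le (K-1)\max\{f(x),f(y)\}.
\]

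The factor $\max\{f(x),f(y)\}$ I would control by a single application of the mean value property on the ball of radius $r$ centered at $x$ (respectively $y$) together with \eqref{dbl-conseq} comparing $\mu(B(x,r))$ with $\mu(B(x_0,2r))$; this yields $f(x),f(y)\le C\cdot 2^{Q}M$, where $M:=\|f\|_{L^{1}(\Omega)}/\mu(B(x_0,2r))$.

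The main obstacle is the last step: showing that $K-1\le (QC/2^{Q})\,(\rho/r)$, so that after substituting the bound on $\max\{f(x),f(y)\}$ one obtains $|f(x)-f(y)|\le QC^{2}M\,\rho/r$. This is precisely where the hypothesis $r(C^{1/(Q(C-1))}-1)\le d(x,y)$ comes in: as $\rho\to 0$ the quantity $K=C(r/(r-\rho))^{Q}$ tends to $C>1$ rather than to $1$, so no Lipschitz-type estimate can possibly hold without a lower bound on $\rho$. The hypothesis is equivalent to $(1+\rho/r)^{Q(C-1)}\ge C$, and the ``residual constant'' $C-1$ is absorbed into a multiple of $\rho/r$ by combining this inequality with the elementary Bernoulli-type estimate $(1+t)^{Q}-1\le Qt(1+t)^{Q-1}$ applied to $t=\rho/(r-\rho)$. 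This algebraic manipulation is the only nontrivial bookkeeping in the proof.
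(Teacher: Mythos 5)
Your overall skeleton (compare the means of $f$ over two nested balls, use \eqref{dbl-conseq}, and invoke the lower bound on $d(x,y)$ to linearize the resulting constant) is the right one, but the specific nesting you choose creates a gap that your own ``main obstacle'' paragraph does not close. Writing $\rho:=d(x,y)$, you use $B(x,r-\rho)\subset B(y,r)$ and arrive at $f(x)\le K f(y)$ with $K=\mu(B(y,r))/\mu(B(x,r-\rho))$, for which the only available bound from \eqref{dbl-conseq} is $K\le C\bigl(r/(r-\rho)\bigr)^{Q}$. This fails on three counts. First, the hypotheses allow $\rho\ge r$ (the points only lie in $B(x_0,r)$, so $\rho$ can be anything up to $2r$), in which case $B(x,r-\rho)$ is empty and the construction is vacuous. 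Second, even for $\rho<r$ the bound on $K$ blows up as $\rho\to r^{-}$, while your target $K-1\le (QC/2^{Q})\,\rho/r$ stays bounded; already at $\rho=r/2$ you would need $C\,2^{Q}-1\le QC/2^{Q+1}$, which is false for essentially every doubling constant. Third, the proposed absorption of the residual $C-1$ via $(1+\rho/r)^{Q(C-1)}\ge C$ produces a constant of order $Q(C-1)^{2}/\ln C$ rather than $QC/2^{Q}$, so the stated value of $c$ is out of reach by this route.

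The paper nests the balls the other way: it uses $B(y,r)\subset B(x,r+\rho)$ and applies the mean value property on the \emph{enlarged} ball at $x$, so that $f(x)\ge \frac{\mu(B(y,r))}{\mu(B(x,r+\rho))}\,f(y)$ with a ratio that is automatically $\le 1$. One then only needs the chain of lower bounds $\frac{\mu(B(y,r))}{\mu(B(x,r+\rho))}\ge\frac{1}{C}\bigl(\frac{r}{r+\rho}\bigr)^{Q}\ge \bigl(\frac{r}{r+\rho}\bigr)^{QC}\ge 1-QC\,\frac{\rho}{r+\rho}\ge 1-\frac{QC}{r}\,\rho$, where the second inequality is exactly the hypothesis $r\bigl(C^{1/(Q(C-1))}-1\bigr)\le d(x,y)$ rewritten as $\bigl(\frac{r}{r+\rho}\bigr)^{Q(C-1)}\le\frac1C$, and the third is Bernoulli. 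This gives $f(y)-f(x)\le \frac{QC}{r}\,\rho\,\vint_{B(y,r)}f$ directly; symmetrizing in $x$ and $y$ and comparing $\mu(B(y,r))$ with $\mu(B(x_0,2r))$ via \eqref{dbl-conseq} yields the stated constant, with no restriction $\rho<r$ ever needed. If you redo your argument with the enlarged ball at $x$ in place of the shrunken one, your plan goes through essentially as the paper's does.
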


We further note that the lower bound condition in the proposition imposed on a distance $d(x,y)$ is equivalent to a condition on the doubling constant of $\mu$. Namely, with the above notation and values of $Q$ and $C$ it holds that $C^{\frac{1}{Q(C-1)}}-1<1$ provided that $C_{\mu}^2>1+2/\log_23$. This can be easily satisfied by increasing $C_{\mu}$ if necessary.

\begin{proof}
  Let $x, y\in B(x_0, r)$ for some $x_0\in \Om$. The doubling property of $\mu$, cf. \eqref{dbl-conseq}, implies that for balls $B(y, r)\subset B(x, r+d(x,y))$ we have that
  \begin{equation*}
   \frac{\mu(B(y,r))}{\mu(B(x, r+d(x,y)))} \geq  \frac{1}{C} \left(\frac{r}{r+d(x,y)}\right)^Q.
  \end{equation*}

 Upon simple algebraic transformations of the assumption relating the distance $d$ and $\epsilon, C$ and $Q$ we get that
 \begin{align}
 &\left(\frac{\epsilon}{\epsilon+d(x,y)}\right)^{Q(C-1)}\leq \frac{1}{C} \nonumber \\
 &\left(\frac{\epsilon}{\epsilon+d(x,y)}\right)^{QC}\leq \frac{1}{C} \left(\frac{\epsilon}{\epsilon+d(x,y)}\right)^{Q}.
 \label{lip-est1}
 \end{align}

  We now appeal to the Bernoulli inequality, combining it with the estimate in \eqref{lip-est1}, to obtain that
  \begin{align*}
   \frac{1}{C}\left(\frac{r}{r+d(x,y)}\right)^Q = \frac{1}{C}\left(1-\frac{d(x,y)}{r+d(x,y)}\right)^{QC}\geq 1-{QC}\frac{d(x,y)}{r+d(x,y)}\geq 1-\frac{QC}{r} d(x,y).
  \end{align*}
 Hence,
 \begin{align*}
& \frac{1}{\mu(B(x, r+d(x,y)))} \int_{B(x, r+d(x,y))} f(z) d\mu(z) \geq  \frac{1}{\mu(B(x, r+d(x,y)))}
   \int_{B(y, r)} f(z) d\mu(z) \\
& = \frac{\mu(B(y, r))}{\mu(B(x, r+d(x,y)))} \vint_{B(y, r)} f(z) d\mu(z) \geq
\frac{1}{C}\left(\frac{r}{r+d(x,y)}\right)^Q \vint_{B(y, r)} f(z) d\mu(z)  \\
&\geq \left(1-\frac{QC}{r} d(x,y)\right) \vint_{B(y, r)} f(z) d\mu(z).
 \end{align*}
 This observation together with the nonnegativity and harmonicity assumptions on $f$ imply the following estimate:
 \begin{align*}
 f(x)-f(y)&=\vint_{B(x, r+d(x,y))} f(z) d\mu(z)- \vint_{B(y, r)} f(z) d\mu(z) \geq -\frac{QC}{r} d(x,y)\vint_{B(y, r)} f(z) d\mu(z)
 \end{align*}
  The above inequality holds true if we replace $x$ with $y$ and, therefore, the following inequality holds:
  \begin{equation}\label{lip-est2}
   |f(x)-f(y)|\leq \frac{QC}{r \mu(B(y, r))} \|f\|_{L^1(\Om)}d(x,y).
  \end{equation}
  We again appeal to property \eqref{dbl-conseq} of doubling measures and get
  \[
   \frac{\mu(B(y, r))}{\mu(B(x_0, 2r))}\geq \frac{C}{2^Q}.
  \]
  In a consequence,
  \begin{equation}\label{lip-est3}
   |f(x)-f(y)|\leq \frac{QC^2}{\mu(B(x_0, 2r))} \|f\|_{L^1(\Om)} \frac{d(x,y)}{r}.
  \end{equation}
 This completes the proof of Proposition~\ref{harm-lip}.
\end{proof}

The next result gives us the Lipschitz regularity in the important case of the uniform measure growth.
\begin{definition}\label{def-uni-m}
Let $(X, d, \mu)$ be a geodesic metric space equipped with a Borel regular measure $\mu$. We call $\mu$ a \emph{$Q$-uniform measure} for some $Q\geq 1$, if there exists a constant $C>0$ such that for any $x\in X$ and all $r>0$
 \begin{equation}\label{meas-cond}
  \mu(B(x,r))=Cr^Q.
 \end{equation}

\end{definition}

 Uniform measures play an important role in geometric measure theory. For instance, if $X=R^n$, then Marstrand~\cite{mar} proved that for a non-trivial $Q$-uniform measure it necessarily holds that $Q\in \mathbb{N}$ (see also Chousionis--Tyson~\cite{cht} for a discussion of Marstrand's theorem and uniform measures in the setting of Heisenberg groups). One of results of the celebrated paper by Preiss~\cite{pre} stays that for $Q=1,2$ uniform measures are flat. Let us also mention that uniform measures have been employed to investigate relations between harmonic measures and non-tangentially accessible domains (NTA-domains), see Kenig--Preiss--Toro~\cite{kpt} and in the studies of rectifiable measures, see Tolsa~\cite{tol}. Moreover, uniform measure appear in potential and stochastic analysis, see Bogdan--St\'os--Sztonyk~\cite{bss}, in the theory of incompressible flows with vorticities, see Cie\'slak--Szuma\'nska~\cite{csz}.

\begin{prop}\label{prop-unif-meas}
 Let $(X, d, \mu)$ be a geodesic metric space such with a $Q$-uniform measure $\mu$. Then, any harmonic function $f\in \harm(\Om, \mu)$ is locally $L$-Lipschitz on every compact $K\subset \Om$ for $L=Q2^{Q+1}\frac{M}{\dist(K, X\setminus \Om)}$ and $M=\|f\|_{L^{\infty}(K)}$.

 Furthermore, the assertion holds for $f\in \wharm(\Om, \mu)$ on every compact $K\subset \Om$ provided that $$r^K_m>\dist(K, X\setminus \Om)>0$$
 and the following condition holds:
 \[
 \bigcap_{x \in K}\{r^x_1, r^x_2,\ldots\}\not=\emptyset,
 \]
 that is, all points in $K$ must have at least one common radius for which the mean value property holds for $f$. Moreover, in such a case we have $L=Q2^{Q+1}\frac{M}{r^K_m}$.
\end{prop}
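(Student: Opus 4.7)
The strategy exploits the key simplification provided by a $Q$-uniform measure: since $\mu(B(\cdot,r)) = C r^Q$ is independent of the center, averaging two values of $f$ over balls of the \emph{same} radius $r$ yields a common denominator, reducing the problem to controlling the measure of the symmetric difference $\mu(B(x,r) \vartriangle B(y,r))$. In the strongly harmonic case I would fix $r_0 := \dist(K, X \setminus \Omega)/2$, so that $B(x, r_0) \Subset \Omega$ for every $x \in K$; then, for $x, y \in K$ with $d(x,y) < r_0$, the mean value property gives
\[
f(x) - f(y) = \frac{1}{C r_0^Q} \Bigl( \int_{B(x,r_0) \setminus B(y,r_0)} f \, d\mu \;-\; \int_{B(y,r_0) \setminus B(x,r_0)} f \, d\mu \Bigr),
\]
whence $|f(x) - f(y)| \le \bigl(M/(C r_0^Q)\bigr)\,\mu\bigl(B(x,r_0) \vartriangle B(y,r_0)\bigr)$.

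The symmetric difference sits inside the annulus $B(x, r_0 + d(x,y)) \setminus B(x, r_0 - d(x,y))$, whose measure, again by the uniform condition, equals $C\bigl[(r_0+d(x,y))^Q - (r_0-d(x,y))^Q\bigr]$. The mean value theorem applied to $t \mapsto t^Q$ on the interval $[r_0 - d(x,y),\, r_0 + d(x,y)]$, combined with the bound $r_0 + d(x,y) \le 2 r_0$ coming from $d(x,y) < r_0$, controls this last quantity by $C \cdot Q \cdot 2^Q r_0^{Q-1} d(x,y)$. Plugging back in yields
\[
|f(x) - f(y)| \le \frac{Q\, 2^Q M}{r_0}\, d(x,y) \;=\; \frac{Q\, 2^{Q+1} M}{\dist(K, X \setminus \Omega)}\, d(x,y),
\]
which is the desired local Lipschitz bound; pairs with $d(x,y)\ge r_0$ are handled trivially since the right-hand side already exceeds $2M$.

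For the weakly harmonic case the same chain of estimates transfers verbatim once one fixes a single radius $r$ that is simultaneously admissible at every point of $K$; existence of such $r$ is precisely what the hypothesis $\bigcap_{x \in K} \{r^x_1, r^x_2, \ldots\} \neq \emptyset$ supplies, and the distance hypothesis ensures that all the balls $B(x, r)$ appearing in the calculation sit compactly in $\Omega$ so that the mean value property may be invoked. Running the previous argument with this common $r$ in place of $r_0$ produces a Lipschitz constant of the form $Q\, 2^Q M / r$, and bounding $r$ from below by $r^K_m$ yields the stated $L = Q\, 2^{Q+1} M / r^K_m$. The main technical obstacle is the symmetric-difference bound: it relies in an essential way on the \emph{equality} $\mu(B(\cdot,r)) = C r^Q$, since it is this equality, rather than the mere doubling condition, that lets one compute the measure of the annulus by direct subtraction; without uniformity one would be forced back to the weaker $\delta$-annular decay estimate exploited in Theorem~\ref{thm-ann-dec}, losing the clean dependence on $\dist(K, X \setminus \Omega)$ (respectively $r^K_m$) and the ability to avoid a Lebesgue number of a covering.
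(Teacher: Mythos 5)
Your proposal is correct and follows essentially the same route as the paper's proof: both express $f(x)-f(y)$ via the mean value property over balls of a common radius, bound the resulting integral by $M$ times the measure of the symmetric difference, trap that symmetric difference in the annulus $B(x,r+d(x,y))\setminus B(x,r-d(x,y))$, evaluate the annulus measure exactly using $\mu(B)=Cr^Q$ together with the mean value theorem for $t\mapsto t^Q$, and finally take $r=\tfrac12\dist(K,X\setminus\Om)$ (respectively a common admissible radius bounded below by $r^K_m$). Your observation that the equal denominators kill the second error term (so the factor is $M$ rather than $2M$) is a minor tightening consistent with the stated constant, and your explicit handling of pairs with $d(x,y)\ge r_0$ fills a small gap the paper leaves implicit.
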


\begin{remark}
  It is easy to see that uniform measures satisfy $1$-annular decay property, see Definition~\ref{defn-an-dec} and so for strongly harmonic functions Proposition~\ref{prop-unif-meas} follows from the Theorem~\ref{thm-ann-dec}. However, below we are able to describe more accurately dependence of the Lipschitz constant on the parameters of the underlying space and the harmonic function. In particular, we avoid using a Lebesgue number of a covering. Moreover, the result below gives also the Lipschitz regularity for weakly harmonic functions.
\end{remark}

\begin{proof}[Proof of Proposition~\ref{prop-unif-meas}]
 Let $f\in \harm(\Om, \mu)$. Note that $\mu$ is a doubling measure with a doubling constant $C_\mu=2^{-Q}$. Then, Proposition~\ref{prop-dbl} implies that $\mu$ is continuous with respect to metric $d$. In a consequence, we infer from Corollary~\ref{harm-loc-bdd} that $f$ is locally bounded. Denote by $M$ an upper bound of $f$ on some compact set $K\subset \Om$. The estimate similar to \eqref{harm-cont-est} in Proposition~\ref{harm-cont} gives us that
 \begin{align}
   |f(x)-f(y)|&\leq \frac{1}{\muballxr} \int_{B(x,r) \vartriangle B(y,r)} |f(z)| d\mu(z)
  + \frac{|\muballxr-\mu(B(y, r))|}{\muballxr \mu(B(y, r))}\left|\int_{B(y,r)}f(z)d\mu(z)\right| \nonumber\\
  & \leq 2M\frac{\mu(B(x,r) \vartriangle B(y,r))}{\muballxr}. \label{unif-est}
\end{align}
 As in Corollary~\ref{cor-Liouv} we assume that $r>d(x,y)$. Then, by \eqref{cor-Liouv-e1} and \eqref{meas-cond}
 \begin{equation*}
\mu(B(x,r) \bigtriangleup B(y,r)) \leq \mu(B(x,r+d(x,y))\setminus B(x,r-d(x,y)))= C(r+d(x,y))^Q-C(r-d(x,y))^Q.
 \end{equation*}
Hence, by the mean value theorem applied to function $s^Q$, we obtain that
\begin{equation*}
\frac{\mu(B(x,r) \vartriangle B(y,r))}{\muballxr}\leq \left(1+\frac{d(x,y)}{r}\right)^Q-\left(1-\frac{d(x,y)}{r}\right)^Q=2Qs^{Q-1}\frac{d(x,y)}{r},
\end{equation*}
 where $s\leq 1+\frac{d(x,y)}{r}<2$. Choose $x,y\in K$ and let $r:=\frac{1}{2}\dist(K, X\setminus \Om)$. Thus, for all $x, y\in K$ it holds
 \begin{equation*}
   |f(x)-f(y)|\leq L d(x,y),
 \end{equation*}
 where $L=Q2^{Q+1}\frac{M}{\dist(K, X\setminus \Om)}$.

 If $f\in \wharm(\Om, \mu)$, then by assumptions for all $x, y\in K$ we are able to find at least one radius, denoted by $r$, such that \eqref{unif-est} and estimates following it hold for $r$. Then, in the final step we have that
 \[
 \frac{d(x,y)}{r}\leq \frac{d(x,y)}{r^K_m},
 \]
 and so in this case $L=Q2^{Q+1}\frac{M}{r^K_m}$, as desired. The proof is therefore completed.
\end{proof}

One of the consequences of Proposition~\ref{prop-unif-meas} is the differentiability of Lipschitz weakly and strongly harmonic functions on compact sets.

Let $f$ be a locally Lipschitz function in $\Om\subset X$. We define a \emph{lower pointwise dilatation} of $f$ as follows
 \[
  {\rm lip} f(x)=\liminf_{r\to 0} \sup_{y\in B(x, r)}\frac{|f(x)-f(y)|}{r}.
 \]
Similarly, we define an \emph{upper pointwise dilatation} of $f$ by the formula:
 \[
  {\rm Lip} f(x)=\limsup_{r\to 0} \sup_{y\in B(x, r)}\frac{|f(x)-f(y)|}{r}.
 \]

We refer to the following result due to Cheeger, see also Preliminaries for a discussion on weak upper gradients and the Poincar\'e inequalities.

\begin{theorem}[Theorem 6.1 in Cheeger~\cite{cheeg}]
 Let $(X, d, \mu)$ be a complete doubling metric measure space supporting $(1,p)$-Poincar\'e inequality for $p>1$. Let further $f$ be a locally Lipschitz function in a domain $\Om\subset X$. Then the minimal $p$-weak upper gradient $g_f$ of $f$ exists and
 \[
 g_f={\rm lip} f \quad \hbox{a.e. in } \Om.
 \]
 Moreover,
 \[
  {\rm lip} f={\rm Lip} f \quad \hbox{ a.e. in }\Om,
 \]
 and both ${\rm lip} f$ and ${\rm Lip} f$ are upper gradients of $f$.
\end{theorem}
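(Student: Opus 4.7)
The plan is to establish three claims in the following order: (i) $\textrm{Lip}\, f$ is an upper gradient of $f$; (ii) every $p$-weak upper gradient $g$ of $f$ satisfies $\lip f \leq g$ almost everywhere; (iii) $\lip f = \textrm{Lip}\, f$ almost everywhere, from which the identification with $g_f$ and the minimality claim will follow.

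For (i), I would argue by a direct calculus along curves. Given a non-constant rectifiable $\gamma:[0,l(\gamma)]\to X$ parameterized by arc length, the composition $h:=f\circ\gamma$ is Lipschitz on $[0,l(\gamma)]$, hence differentiable almost everywhere; at any differentiability point $t$, the $1$-Lipschitz property $d(\gamma(s),\gamma(t))\leq |s-t|$ forces $|h'(t)|\leq \textrm{Lip}\, f(\gamma(t))$. The fundamental theorem of calculus for Lipschitz functions on an interval then yields the upper gradient inequality along $\gamma$, proving (i).

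The heart of the matter is (ii) and (iii), which exploit the Poincar\'e inequality together with the Lebesgue differentiation theorem (available thanks to doubling). The standard route is to use the Haj\l asz--Koskela pointwise bound: for any upper gradient $g\in L^p_{\mathrm{loc}}$ of $f$,
\[
|f(x)-f(y)|\leq C\, d(x,y)\bigl((\mathcal{M}_R g^p)(x)^{1/p}+(\mathcal{M}_R g^p)(y)^{1/p}\bigr),
\]
where $\mathcal{M}_R$ is a restricted Hardy--Littlewood maximal operator and $C$ depends only on the doubling and Poincar\'e constants. At every Lebesgue point $x$ of $g^p$ (so at almost every $x$, since $p>1$), one takes the supremum over $y\in B(x,r)$ and lets $r\to 0^+$ to obtain $\textrm{Lip}\, f(x)\leq C\,g(x)$ a.e. Applied with $g=\textrm{Lip}\, f$ itself---an admissible upper gradient by (i), locally bounded because $f$ is locally Lipschitz---this produces the comparability $\textrm{Lip}\, f\leq C\,\lip f$ a.e., so both dilatations are equivalent a.e.; statement (ii) then follows by applying the pointwise estimate with $g$ an arbitrary $p$-weak upper gradient and invoking this equivalence.

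The principal obstacle is removing the constant $C>1$ so as to obtain the sharp identity $\lip f=\textrm{Lip}\, f=g_f$ a.e. This is the deepest step of Cheeger's argument: one performs a blow-up analysis at Lebesgue points of the minimal $p$-weak upper gradient and exploits the self-improving character of the $(1,p)$-Poincar\'e inequality (later formalized by Keith and Zhong) to iterate the pointwise bound and force $C$ down to $1$. Once this sharp equality is in hand, the minimality of $\lip f$ among $p$-weak upper gradients is immediate from (ii), while (i) combined with the a.e.\ identity just proved shows that both $\lip f$ and $\textrm{Lip}\, f$ are themselves upper gradients of $f$.
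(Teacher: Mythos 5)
The paper does not prove this statement at all: it is imported verbatim as Theorem 6.1 of Cheeger's paper and used as a black box to derive Corollaries~\ref{cor1-cheeger-style} and~\ref{cor2-cheeger-style}. So there is no in-paper argument to compare yours against; what can be assessed is whether your sketch would stand on its own as a proof of Cheeger's theorem, and it would not. Step (i) and the comparability part of (ii) are sound and standard: the curve-wise calculus showing that $\textrm{Lip}\, f$ is an upper gradient is correct (modulo checking Borel measurability of $\textrm{Lip}\, f$, and modulo the fact that the symmetric two-point Haj\l asz--Koskela inequality, carrying the maximal function at \emph{both} endpoints, does not directly yield a pointwise bound on $\textrm{Lip}\, f(x)$: being a Lebesgue point of $g^p$ at $x$ gives no uniform control of $(\mathcal{M}_R g^p)(y)$ for $y$ near $x$, so one should instead run the telescoping estimate for $|f(y)-f_{B(x,r)}|$ in terms of a maximal function centered near $x$).

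The genuine gap is the final step, which is the entire content of the theorem. Reducing the multiplicative constant from $C$ to $1$, i.e.\ proving the exact a.e.\ identities $g_f=\lip f=\textrm{Lip}\, f$, is dispatched in your proposal by one sentence invoking a blow-up analysis and the self-improving character of the $(1,p)$-Poincar\'e inequality as formalized by Keith and Zhong. This is not a proof, and the cited mechanism is the wrong one: the Keith--Zhong theorem improves the \emph{exponent} $p$ in the Poincar\'e inequality, postdates Cheeger's work, and plays no role in his argument; moreover, iterating a pointwise inequality of the form $\textrm{Lip}\, f\leq C\, g$ cannot drive $C$ down to $1$, since that inequality is not of a self-improving type. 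Cheeger's actual route to the sharp identity goes through the measurable differentiable structure: he first establishes the Rademacher-type theorem (finite-dimensional coordinate charts in which Lipschitz functions are a.e.\ differentiable) and then shows that $\textrm{Lip}\, f(x)$, $\lip f(x)$ and the minimal generalized upper gradient $g_f(x)$ all coincide a.e.\ with the same canonical norm of the differential $df(x)$. Without that machinery (or one of the later alternative arguments), your outline establishes only the two-sided comparability of $\lip f$, $\textrm{Lip}\, f$ and $g_f$ with constants depending on the doubling and Poincar\'e data, which is strictly weaker than the statement.
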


One combines the above Cheeger's theorem with Proposition~\ref{prop-unif-meas} to obtain the following observation. (Recall that uniform measures are doubling.)
\begin{corol}\label{cor1-cheeger-style}
 Let $(X, d, \mu)$ be a complete geodesic metric space such with a $Q$-uniform measure $\mu$ for some $Q\geq 1$ and supporting $(1,p)$-Poincar\'e inequality for $p>1$. Suppose that $f$ is a strongly harmonic function in a compact set $K\subset \Om$. Then, the minimal $p$-weak upper gradient $g_f$ of $f$ exists and
 \[
  g_f={\rm lip} f ={\rm Lip} f
 \]
 a.e. in $\Om$.

 Furthermore, the assertion holds for a weakly harmonic function $f$ in $K$ provided that $f$ satisfies the assumptions of the second part of Proposition~\ref{prop-unif-meas}.
\end{corol}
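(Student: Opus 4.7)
The plan is to verify the hypotheses of Cheeger's theorem directly and invoke it, using Proposition~\ref{prop-unif-meas} as the only nontrivial ingredient.

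First I would observe that a $Q$-uniform measure $\mu$ is doubling with constant $C_\mu=2^Q$, since \eqref{meas-cond} gives $\mu(B(x,2r))/\mu(B(x,r))=2^Q$ for every ball. Thus $(X,d,\mu)$ is a complete (by assumption), doubling metric measure space supporting a $(1,p)$-Poincar\'e inequality for some $p>1$, which are exactly the standing hypotheses of Cheeger's Theorem 6.1 in \cite{cheeg}.

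Next, because $(X,d,\mu)$ is geodesic with a $Q$-uniform measure, Proposition~\ref{prop-unif-meas} applies to the strongly harmonic function $f$: on every compact $K\subset\Om$ the function $f$ is $L$-Lipschitz with $L=Q2^{Q+1}\|f\|_{L^\infty(K)}/\dist(K,X\setminus\Om)$. Since every point $x\in\Om$ has a compact neighborhood in $\Om$ (here we use properness, which follows from $X$ being complete and doubling, cf.~Bj\"orn--Bj\"orn~\cite{bb}), this means that $f$ is locally Lipschitz in $\Om$ in the usual sense. Cheeger's theorem then yields the existence of the minimal $p$-weak upper gradient $g_f$ together with the a.e.\ identities
\[
 g_f=\textrm{lip}\, f=\textrm{Lip}\, f\quad\hbox{a.e.\ in }\Om,
\]
as claimed.

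For a weakly harmonic $f$, the argument is literally the same: the assumptions in the second part of Proposition~\ref{prop-unif-meas} (namely $r^K_m>\dist(K,X\setminus\Om)>0$ and the existence of a common admissible radius on $K$) are precisely what is needed to upgrade $f$ to a locally Lipschitz function on compact subsets, after which Cheeger's theorem applies verbatim. There is no substantive obstacle here; the only point worth double-checking is that ``locally Lipschitz on every compact subset'' matches the ``locally Lipschitz in a domain'' hypothesis of Cheeger's theorem, and this follows from properness of $X$ as noted above.
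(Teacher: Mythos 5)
Your proposal is correct and follows exactly the route the paper intends: the paper gives no separate proof of this corollary beyond the remark that one "combines the above Cheeger's theorem with Proposition~\ref{prop-unif-meas}" (recalling that uniform measures are doubling), which is precisely what you do, with the added (correct) detail that the doubling constant is $2^Q$ and that properness lets you pass from Lipschitz bounds on compacta to local Lipschitz continuity in $\Om$.
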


Similarly, by combining the Cheeger's theorem with Theorem~\ref{thm-ann-dec} we arrive at the following result.
\begin{corol}\label{cor2-cheeger-style}
 Let $(X, d, \mu)$ be a complete doubling metric measure space with a $1$-annular decay property supporting $(1,p)$-Poincar\'e inequality for $p>1$. Suppose that $f$ is a strongly harmonic function in a ball $B \subset 2B \Subset \Om$. Then, the minimal $p$-weak upper gradient $g_f$ of $f$ exists and $g_f={\rm lip} f ={\rm Lip} f$ a.e. in $B$.
\end{corol}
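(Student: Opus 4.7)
The plan is to combine the Lipschitz regularity supplied by Theorem~\ref{thm-ann-dec} with the Cheeger differentiation theorem recalled just above, in direct parallel to the sketch of Corollary~\ref{cor1-cheeger-style}.

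First I would verify that $f$ is locally Lipschitz on $B$. Since $X$ enjoys the $1$-annular decay property, Remark~\ref{remark-ann-dec} yields that $\mu$ is continuous with respect to $d$, so Proposition~\ref{harm-cont} gives continuity of $f$ on $\Om$, and Corollary~\ref{harm-loc-bdd} supplies local boundedness. For each $x\in B$, because $\Om$ is open and $\overline{2B}\subset \Om$, one can choose $r_x>0$ small enough that $B(x,3r_x)\Subset \Om$; Theorem~\ref{thm-ann-dec} applied with $\delta=1$ on the ball $B(x,r_x)$ then delivers an explicit Lipschitz estimate for $f$ on $B(x,r_x/2)$. Covering $B$ by such subballs produces local Lipschitz continuity of $f$ on $B$.

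Next, all hypotheses of Cheeger's theorem are in force: $X$ is complete and doubling, and supports a $(1,p)$-Poincar\'e inequality. Applying it to the locally Lipschitz function $f$ on $B$, one concludes that the minimal $p$-weak upper gradient $g_f$ of $f$ exists and
\[
g_f={\rm lip}\, f={\rm Lip}\, f \quad \text{a.e.\ in } B,
\]
as claimed.

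The argument is essentially a one-line deduction once the Lipschitz bound from Theorem~\ref{thm-ann-dec} is in place, so I do not expect a substantial obstacle. The only bookkeeping point is the mild mismatch between the hypothesis $2B\Subset \Om$ appearing in the statement and the hypothesis $3B\Subset \Om$ needed to invoke Theorem~\ref{thm-ann-dec} at a given scale; this is harmless, since only the a.e.\ conclusion on $B$ is required and that follows from Lipschitz estimates on arbitrarily small subballs exhausting $B$.
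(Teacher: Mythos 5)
Your proposal is correct and follows exactly the route the paper intends: the paper gives no detailed argument beyond the remark that the corollary follows ``by combining the Cheeger's theorem with Theorem~\ref{thm-ann-dec},'' which is precisely your plan. You in fact supply more detail than the paper does --- the local boundedness needed to invoke Theorem~\ref{thm-ann-dec} (via Remark~\ref{remark-ann-dec}, Proposition~\ref{harm-cont} and Corollary~\ref{harm-loc-bdd}) and the harmless $2B$ versus $3B$ bookkeeping --- and both points are handled correctly.
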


\section{The Dirichlet problem and Perron solutions}\label{Section5}

We begin this section with an observation that in general, the Dirichlet boundary value problem need not have a solution even in the simplest one-dimensional case as the following example shows.
\begin{example}
Let $f \in \mathcal{H}( \mathbb{R}, |\cdot|, |x| dx)$ and set $g:=f|_{(0,1)}$. Then  $g\in \mathcal{H}((0, 1), |\cdot|, xdx)$ and by observing that $x \in \mathcal{H}( (0, 1), |\cdot|, dx)$, we conclude by Proposition~\ref{Liouv-prop1}
that $xg\in \mathcal{H}( (0, 1), |\cdot|, dx)$ and, thus, $g(x) = \frac{A}{x} + B$ for some positive constants $A, B$ and, hence, $f \equiv B$ by the continuity of $f$ (see also Example~\ref{ex-Liouv2} below).

Consider the Dirichlet problem of finding a harmonic function $g$ in $\mathcal{H}((0,1), |\cdot|, xdx)$ such that $g(0)\not=g(1)$. Then, by the above reasoning there is no solution of such problem.
\end{example}

The purpose of this section is to study the following questions:
\smallskip

\begin{quote} (1) \emph{When does a Dirichlet problem for a functions with the mean value property as in Definition~\ref{defn-harm} have a solution and for what type of boundary data?}
\smallskip

\noindent (2) \emph{How to construct a solution to the harmonic Dirichlet problem?}
\end{quote}

Although these questions are nowadays classical in the Euclidean setting, see e.g. Gilbarg--Trudinger~\cite{gt}, their metric counterparts have been intensively studied mainly in past two decades, see e.g. Section 10 in Bj\"orn--Bj\"orn~\cite{bb} and references therein. However, results in \cite{bb} apply to harmonic functions defined as minimizers of the $2$-Dirichlet energy, whereas we answer the above two questions for harmonic functions defined via the mean value property.

In the first part we study the solvability of the Dirichlet problem by employing the so-called Dynamical programming principle, generalized to the metric setting and based on studies conducted for $p$-harmonious functions in Euclidean domains by Luiro--Parviainen--Saksman~\cite{lps} and Manfredi--Parviainen--Rossi~\cite{mpr}. Our results apply to some functions with the mean value property for measurable and continuous boundary data. In the second part we employ the Perron method and solve the Dirichlet boundary problem for strongly harmonic functions with continuous data.

\subsection{The Dirichlet problem and the Dynamical programming principle}

Our approach to the Dirichlet problem for harmonic functions in metric space is the following. First, we extend the dynamical programming principle as presented in~\cite{lps} to the setting of metric spaces with Borel regular measures. Theorems~\ref{eps-exist-meas} and \ref{eps-exist-cont} below extend Theorems 2.1 and 4.1 in \cite{lps} for measurable and continuous data, respectively. Moreover, Theorem~\ref{eps-exist-meas} provides us with a function $u_{\epsilon}$, the solution to the Dirichlet problem with measurable data, such that it satisfies the mean value property on balls with radii $\epsilon$. In Theorem~\ref{eps-exist-cont} we show that the similar property holds for
solutions to the Dirichlet problem with continuous data at points with $\ep$-distance from the complement of the domain. Finally, we show that in metric measure spaces with the $\delta$-annular decay property the existence of subharmonic solution of the boundary value problem with continuous data implies existence of the weakly harmonic continuous function with the same continuous boundary data, see Theorem~\ref{sub-Dir}.

We follow the notation of Section 2.1 in \cite{lps} and for $\ep>0$ define the $\ep$-boundary strip of a domain $\Om\subset X$:
\[
\Gamma_\ep=\{x\in X\setminus \Om: \dist(x, \Om)\leq \ep \}.
\]
In order to ensure that $\Gamma_\ep\not=\emptyset$ we need to assume that $\Om$ is such that $\Gamma_\ep\subset X$. This assumption will not weaken our results, as in fact we apply them only for balls $B_\ep$ compactly contained in the underlying domain with radii $\ep$ small enough so that $B_\ep\cup \Gamma_\ep$ remains a subset of the domain.

Denote by $\Om_\ep:=\Om\cup \Gamma_\ep$.

\begin{theorem}\label{eps-exist-meas}
 Let $(X,d, \mu)$ be a metric measure spaces with a measure continuous with respect do $d$. Moreover, let $g:\partial \Om\to \R$ be a bounded Borel measurable function and $F:\Gamma_\ep\to \R$ be defined as follows:
 \begin{equation}
  F(x)=
  \begin{cases}
   &g(x),\quad x\in \partial \Om, \\
   &0,\quad x\in \Gamma_\ep\setminus \partial \Om.
  \end{cases}
 \end{equation}
 Then, there exists a bounded Borel function $u:\Om_\ep\to \R$ solving the following Dirichlet problem with the boundary data $F$:
 \begin{equation}\label{eps-exist-est}
  \begin{cases}
   &u(x)=\vint_{B(x,\ep)} u(y)d\mu(y),\quad x\in \Om,\\
   &u|_{\Gamma_\ep}=F.
  \end{cases}
 \end{equation}
 In fact, $u$ is the uniform limit of a sequence $\{u_i\}_{i=0}^{\infty}$ defined via the following iteration scheme:
 \begin{equation}\label{def-uzero}
 u_0(x)=
  \begin{cases}
   &\inf_{x\in \Gamma_\ep} F(x),\quad x\in \Om,\\
   &F(x),\quad x\in \Gamma_\ep,
  \end{cases}
 \end{equation}
 while for $i=0,1,\ldots$ we define
 \begin{equation}\label{def-ui}
 u_{i+1}(x)=Tu_i(x):=
  \begin{cases}
   &\vint_{B(x,\ep)} u_i(y)d\mu(y),\quad x\in \Om,\\
   &u_i(x),\quad x\in \Gamma_\ep.
  \end{cases}
 \end{equation}
\end{theorem}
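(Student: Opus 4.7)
The strategy is a dynamic-programming fixed-point argument: view the operator $T$ in \eqref{def-ui} as a monotone order-preserving map on bounded Borel functions on $\Om_\ep$, and obtain $u$ as the limit of iterates starting from the ``floor'' $u_0$. First I would verify that $T$ maps bounded Borel functions on $\Om_\ep$ to themselves. Well-definedness of the average for $x\in\Om$ uses $B(x,\ep)\subset\Om\cup\Gamma_\ep=\Om_\ep$, which is immediate from the definition of $\Gamma_\ep$. Borel measurability (in fact, continuity on $\Om$) of $Tu_i$ follows from
\[
|Tu_i(x)-Tu_i(y)|\le \frac{\|u_i\|_\infty\,\mu(B(x,\ep)\Delta B(y,\ep))}{\mu(B(x,\ep))}+\frac{|\mu(B(x,\ep))-\mu(B(y,\ep))|\,\|u_i\|_\infty}{\mu(B(y,\ep))},
\]
by exactly the reasoning of \eqref{harm-cont-est}, combined with the continuity of $\mu$ with respect to $d$.

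Next I would establish monotonicity and uniform bounds. Since $u_0\equiv \inf F$ is minimal on $\Om$ and $u_0=F$ on $\Gamma_\ep$, positivity of the average yields $u_1\ge u_0$ on $\Om$, with equality on $\Gamma_\ep$; linearity and monotonicity of $T$ then propagate $u_{i+1}\ge u_i$ for all $i$. Symmetrically, $\sup F$ is an upper bound preserved by $T$, so the whole sequence stays in $[\inf F,\sup F]$. Consequently the pointwise limit $u(x):=\lim_i u_i(x)$ exists everywhere, is Borel, bounded, and equals $F$ on $\Gamma_\ep$ (since each $u_i$ does). The integral equation $u(x)=\vint_{B(x,\ep)}u\,d\mu$ for $x\in\Om$ then follows directly from the monotone convergence theorem applied to $Tu_i(x)\to u(x)$. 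This already furnishes the existence claim.

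The main remaining obstacle is upgrading monotone pointwise convergence to the uniform convergence asserted in the theorem. Setting $v_i:=u_{i+1}-u_i\ge 0$, one has $v_i\equiv 0$ on $\Gamma_\ep$ and $v_{i+1}(x)=\vint_{B(x,\ep)} v_i\,d\mu$ on $\Om$, so for any $x\in\Om$ with $\mu(B(x,\ep)\cap\Gamma_\ep)>0$ one obtains the strict contraction
\[
v_{i+1}(x)\le \frac{\mu(B(x,\ep)\cap\Om)}{\mu(B(x,\ep))}\,\|v_i\|_\infty.
\]
To push this contraction to points deep inside $\Om$, I would iterate: balls centered near $\Gamma_\ep$ contract immediately, then balls one ``step'' further inward overlap the already-contracted boundary strip, and after finitely many steps one obtains a uniform contraction factor strictly less than one on any bounded portion of $\Om$. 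The delicate point is securing a \emph{uniform} lower bound on the relative mass of $B(x,\ep)\cap\Gamma_\ep$ for $x$ in the boundary strip in order to conclude geometric decay $\|v_i\|_\infty\to 0$; this is the step where the adaptation of the argument of Luiro--Parviainen--Saksman~\cite{lps} to the metric setting requires the most care, and it is where the metric continuity of $\mu$ (ensuring continuous dependence of the relevant ball measures on the center) is genuinely used.
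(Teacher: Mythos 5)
Your construction of the iterates, the verification that $T$ preserves bounded Borel functions (via the metric continuity of $\mu$ and an estimate of the type \eqref{harm-cont-est}), the monotonicity $u_i\le u_{i+1}$, and the uniform bound by $\sup_{\partial\Om}g$ all match the paper's argument. Your observation that the equation $u(x)=\vint_{B(x,\ep)}u\,d\mu$ for the pointwise limit already follows from monotone (or dominated) convergence is correct and in fact more economical than the paper, which invokes uniform convergence at this step; so the bare existence claim is fully established in your proposal.

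The gap is in the uniform convergence, which is part of the assertion of the theorem. Your contraction estimate
\[
v_{i+1}(x)\le\frac{\mu(B(x,\ep)\cap\Om)}{\mu(B(x,\ep))}\,\|v_i\|_\infty
\]
is correct, but the factor is strictly less than $1$ only when $\mu(B(x,\ep)\setminus\Om)>0$, and it is bounded away from $1$ uniformly over the boundary layer only under geometric and measure-theoretic hypotheses that the theorem does not assume. Metric continuity of $\mu$ gives continuity of $x\mapsto\mu(B(x,\ep))$, but provides no lower bound for $\mu(B(x,\ep)\cap\Gamma_\ep)$: if $X\setminus\Om$ is locally $\mu$-null near part of $\partial\Om$ (e.g.\ a slit domain in $\R^2$), the factor equals $1$ there and no geometric decay of $\|v_i\|_\infty$ results. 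Likewise the inward propagation ``layer by layer'' needs a uniform positive lower bound on the relative mass of $B(x,\ep)$ intersected with the previously contracted layer, which again does not follow from the stated hypotheses (and in a non-geodesic space the ball need not even meet that layer). The paper sidesteps this entirely: it proves uniform convergence by contradiction, following the proof of Theorem~2.1 in \cite{lps} with $\alpha=0$ and $\beta=1$, an argument that exploits the already-established monotone pointwise convergence together with dominated convergence rather than any quantitative contraction. To complete your proof of the full statement you would either have to reproduce that contradiction argument or add hypotheses guaranteeing the uniform mass lower bounds your contraction scheme requires.
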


\begin{proof}
 First, notice that $Tu_i$ is Borel bounded measurable function in $\Om_\ep$ for every $i=0,1,\ldots$. In order to verify this observation, first notice that $u_0$ is measurable. Indeed, from \eqref{def-uzero} we infer that
 \begin{equation*}
 u_0(x)=
  \begin{cases}
   &\min\{\inf_{\partial \Om}g, 0\} ,\quad x\in \Om,\\
   &g(x),\quad x\in \partial \Om,\\
   &0,\quad x\in \Gamma_\ep\setminus \partial \Om.
  \end{cases}
 \end{equation*}
  This, together with Borel measurability of $g$ imply that $u_0$ is Borel measurable as well. Next we compute
 \begin{align*}
 u_1(x):= Tu_0(x)&=
  \begin{cases}
   &\vint_{B(x,\ep)} u_0(y)d\mu(y),\quad x\in \Om,\\
   &u_0(x),\quad x\in \Gamma_\ep.
  \end{cases} \\
  &=
  \begin{cases}
   &\frac{\mu(B(x,\ep)\cap \Om)}{\mu(B(x, \ep))}\min\{\inf_{\partial \Om}g, 0\} ,\quad x\in \Om,\\
   &g(x),\quad x\in \partial \Om,\\
   &0,\quad x\in \Gamma_\ep\setminus \partial \Om.
  \end{cases}
 \end{align*}
  The continuity of $\mu$ with respect to $d$ gives us that $u_1(x)$ is continuous for $x\in \Om$, whereas for $x\in\Om_{\ep}\setminus \Om$ the measurability of $u_1$ follows the same argument as for $u_0$. Indeed, by Lemma~\ref{meas-cont-lem}(\ref{mclem-1}) we have that a function $x\mapsto \mu(B(x,\ep))$ is continuous in $\Om$.
  Similarly, a function $\Om\ni x\to \vint_{B(x,\ep)} u_0(y) d\mu(y)$ is measurable as a quotient of two measurable functions. The measurability of $u_{i+1}=Tu_i$ for $i=1,2,\ldots$ follows the same steps by induction and we omit details.

 We continue the proof following the reasoning for Theorem 2.1 in \cite{lps}. It is immediate to check that $u_1\geq u_0$ in $\Om_\ep$. In order to give an idea about formulas describing functions $u_i$ we compute also
 \begin{align*}
 u_2(x)&:=Tu_1(x) \\
 &=
  \begin{cases}
   &\frac{\mu(B(x,\ep)\cap \{y\in \Om\,:\,\dist(y, \partial \Om)\geq \ep\})}{\mu(B(x, \ep))}+
   \frac{\min\{\inf_{\partial \Om}g, 0\}}{\mu(B(x, \ep))}\int_{B(x,\ep)\cap \{y\in \Om\,:\,\dist(y, \partial \Om)< \ep\}}\frac{\mu(B(y, \ep)\cap \Om)}{\mu(B(y, \ep))}d\mu(y) ,\quad x\in \Om,\\
   &g(x),\quad x\in \partial \Om,\\
   &0,\quad x\in \Gamma_\ep\setminus \partial \Om.
  \end{cases}
 \end{align*}
 This yields
 \[
  u_2=Tu_1\geq Tu_0=u_1.
 \]
 The same reasoning allows us to conclude that $u_{i}\leq u_{i+1}$ in $\Om_\ep$ for all $i=0,1,\ldots$. Definitions of $u_0$ and operator $T$ together with easy argument by induction imply that the sequence $\{u_i\}_{i=0}^{\infty}$ is increasing and uniformly bounded from above by $\sup_{\partial \Om} g<\infty$. The latter property is a consequence of a simple induction applied with \eqref{def-ui}. Namely, since $|u_0|\leq \sup_{\partial \Om}g$ in $\Om_{\ep}$, then so is $u_1$. Then, by assuming that $|u_i|\leq \sup_{\partial \Om}g$ in $\Om_{\ep}$ for some $i>1$ we trivially obtain that for $x\in \Om$
 \[
 |u_{i+1}(x)|\leq \vint_{B(x,\ep)} |u_i(y)|d\mu(y)\leq \sup_{\partial \Om}g,
 \]
 while otherwise, in $\Om_{\ep}\setminus \Om$, the boundedness of $u_{i+1}$ immediately follows from its definition.

 Therefore, we are in a position to define the following bounded Borel measurable function $u:\Om_\ep\to \R$:
 \[
  u(x)=\lim_{i\to\infty}u_i(x).
 \]
  Next, one shows that the convergence is uniform and the proof is by contradiction. This proof follows steps of the corresponding proof of Theorem 2.1 in~\cite{lps} for $\alpha=0$ and $\beta=1$ and, therefore, will be omitted. Let us comment, that the uniform convergence implies that $u$ satisfies \eqref{eps-exist-est} and, by construction, has the boundary data $F$.
 \end{proof}

In the next theorem we prove the solvability of the Dirichlet problem similar to \eqref{eps-exist-est} for continuous boundary data.

Define
\[
 \Gamma_{\ep,\ep}:=\{x\in X\,:\,\dist(x,\partial \Om)\leq \ep\},\quad\Om_{\ep}:=\Om\cup\Gamma_{\ep,\ep}
 \quad\hbox{and}\quad \Gamma_{\ep}:=\Gamma_{\ep,\ep}\setminus \Om.
\]
Using the Tietze extension theorem applied to $\partial \Om$ and a continuous function $g:\partial \Om\to \R$ we obtain a continuous function $F:\Gamma_{\ep,\ep}\to \R$ such that $F|_{\partial \Om}=g$. Moreover,
\[
 \sup_{x\in \Gamma_{\ep,\ep}}|F(x)|= \sup_{x\in \partial \Om}|g(x)|<\infty.
\]

Let us remark, that if $F$ is an extension of $g$ for some $\ep_1$, then $F$ can be taken also for all $\ep\leq \ep_1$.
\begin{theorem}[cf. Theorem 4.1 in \cite{lps}]\label{eps-exist-cont}
 Let $\Om$ be a locally connected domain in $(X, d, \mu)$ with $\mu$ continuous with respect to $d$ and let $F:\Gamma_{\ep,\ep}\to \R$ be a continuous function as above. Then, there exists a unique continuous $u_\ep:\Om_{\ep}\to\R$ which solves the following boundary value problem:
\begin{equation}\label{eps-exist-cont-est}
 u_\ep(x)=\left\{
  \begin{array}{cccc}
   &\vint_{B(x,\ep)} u_\ep(y)d\mu(y),&\, &x\in \Om\setminus \Gamma_{\ep,\ep}\\
   &\left(1-\frac{\dist(x,\Gamma_{\ep})}{\ep}\right)F(x)+\frac{\dist(x,\Gamma_{\ep})}{\ep}\vint_{B(x,\ep)} u_\ep(y)d\mu(y),&\,&x\in \Om\cap\Gamma_{\ep,\ep}\\
   &F(x),&\, &x\in \Gamma_{\ep,\ep}\setminus \Om.
  \end{array}
  \right.
 \end{equation}
 In particular, $u_\ep|_{\partial \Om}=g$.
\end{theorem}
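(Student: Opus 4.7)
The plan is to follow the dynamic programming scheme of Theorem~\ref{eps-exist-meas}, modified so that continuity is preserved at every step of the iteration, and to use the convex combination on the transition strip $\Om\cap\Gamma_{\ep,\ep}$ as the source of both the smooth matching at the boundary and the uniqueness of the solution. The coefficient $\lambda(x):=\dist(x,\Gamma_\ep)/\ep$ lies in $[0,1]$ for $x\in\Om\cap\Gamma_{\ep,\ep}$, vanishes on $\Gamma_\ep$ and equals one on $\{x\in\Om:\dist(x,\partial\Om)=\ep\}$; this is precisely what glues the three pieces of \eqref{eps-exist-cont-est} together and provides a strict contraction interior to the strip.

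First I would define the operator $Tu$ by the right hand side of \eqref{eps-exist-cont-est} and initialize $u_0(x):=\inf_{\Gamma_{\ep,\ep}}F$ for $x\in\Om$ and $u_0:=F$ on $\Gamma_{\ep,\ep}\setminus\Om$. A direct check gives $Tu_0\geq u_0$, the monotonicity of $T$, and $\sup_{\Om_\ep}|Tu|\leq\sup_{\Gamma_{\ep,\ep}}|F|$. Hence the iterates $u_{i+1}:=Tu_i$ form an increasing uniformly bounded sequence, and their pointwise limit $u_\ep$ satisfies \eqref{eps-exist-cont-est} by dominated convergence, exactly as in Theorem~\ref{eps-exist-meas}.

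The harder step is continuity of $u_\ep$ on $\Om_\ep$, and for this I would first show that each iterate is continuous and then that $u_i\to u_\ep$ uniformly. For the former: the map $x\mapsto\vint_{B(x,\ep)}u_i(y)\,d\mu(y)$ is continuous on $\Om$ whenever $u_i$ is bounded Borel, by the continuity of $\mu$ with respect to $d$ and the estimate of Proposition~\ref{harm-cont} (with $u_i$ in place of $f$); the weight $\lambda(x)$ and the datum $F$ are continuous; the three cases glue continuously across $\{\dist(\cdot,\partial\Om)=\ep\}$ (where $\lambda=1$) and across $\Gamma_\ep$ (where $\lambda=0$ and the formula reduces to $F$). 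For the latter, I would reproduce the Dini-type contradiction argument of the proof of Theorem~2.1 in \cite{lps}: if $u_\ep-u_i$ failed to tend to zero uniformly, a subsequence of points where the gap exceeds a fixed $\eta>0$ would, by compactness of $\overline{\Om_\ep}$ and continuity of the iterates, contradict pointwise convergence at the cluster point. The boundary identity $u_\ep|_{\partial\Om}=g$ is then immediate because $\partial\Om\subset\Gamma_{\ep,\ep}\setminus\Om$ and $u_\ep=F=g$ there.

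Uniqueness I would derive from a maximum principle adapted to $T$. Given two solutions $u,v$, set $w:=u-v$; then $w$ is continuous on $\overline{\Om_\ep}$, vanishes on $\Gamma_{\ep,\ep}\setminus\Om$, satisfies the plain mean value property on $\Om\setminus\Gamma_{\ep,\ep}$, and obeys $w(x)=\lambda(x)\vint_{B(x,\ep)}w\,d\mu$ with $\lambda(x)<1$ on the interior of the strip $\Om\cap\Gamma_{\ep,\ep}$. If $M:=\max_{\overline{\Om_\ep}}w>0$, it cannot be attained at a point where $\lambda<1$, for that would give $M\leq\lambda(x)M<M$; if attained in $\Om\setminus\Gamma_{\ep,\ep}$ it propagates to an entire ball $B(x,\ep)$, and chaining such balls along a curve in the locally connected domain $\Om$ (in the spirit of Proposition~\ref{strong-max-pr}) forces $w\equiv M$ up to a point of the strip with $\lambda<1$, again a contradiction. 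Hence $w\leq 0$, and by symmetry $u\equiv v$. The main technical obstacle is the uniform convergence of the monotone iterates, since without it continuity cannot be transferred to $u_\ep$; a secondary point requiring care is the propagation step in the uniqueness argument, which needs to reach a point of the open set $\{x\in\Om\cap\Gamma_{\ep,\ep}:\lambda(x)<1\}$ and relies on local connectedness of $\Om$.
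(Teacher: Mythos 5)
Your proposal follows essentially the same route as the paper: the same iteration $u_{i+1}=Tu_i$ starting from a constant below $\inf F$, monotonicity and uniform boundedness, continuity of the iterates via metric continuity of $\mu$, uniform convergence by the Dini-type argument of \cite{lps}, and uniqueness by propagating the maximum of $u^1-u^2$ through balls chained along a curve in the path-connected domain. The only (harmless) deviation is that you close the uniqueness argument at an interior point of the strip where $\dist(x,\Gamma_\ep)/\ep<1$, whereas the paper chains all the way to $\partial\Om$ and contradicts continuity of $u^1-u^2$ against the common boundary values; both are correct.
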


\begin{proof}
 Following the idea of the proof of Theorem~\ref{eps-exist-meas} we define a function $u_0:\Om_\ep\to\R$ as
 \[
 u_0(x):=c<\inf_{\Gamma_{\ep,\ep}}F(x),
 \]
  Moreover, for bounded functions $u:\Om_\ep\to \R$ we let $T$ be an operator defined as follows:
\[
 Tu(x):=\left(1-\frac{\dist(x,\Gamma_{\ep})}{\ep}\right)F(x)+\frac{\dist(x,\Gamma_{\ep})}{\ep}\vint_{B(x,\ep)} u(y)d\mu(y).
\]
By convention, we will interpret that
\begin{align*}
 &\left(1-\frac{\dist(x,\Gamma_{\ep})}{\ep}\right)F(x)=0\quad \hbox{and} \quad \frac{\dist(x,\Gamma_{\ep})}{\ep}=1
\end{align*}
for all $x\in \Om\setminus\Gamma_{\ep,\ep}$.

 By an iterative scheme we define $u_{i+1}:=Tu_{i}$ for $i=0,1,\ldots$ and show that $\{u_i\}_{i=0}^{\infty}$ is a monotone increasing bounded sequence of continuous functions in $\Om_\ep$. Furthermore, as in Lemma~\ref{eps-exist-meas}, the sequence $\{u_i\}_{i=0}^{\infty}$ is uniformly bounded from above by $\sup_{\partial \Om} g<\infty$ and the argument for this to hold follows again from definitions of $u_0$, $T$ and induction.

 The natural modification of the proof of Theorem~\ref{eps-exist-meas} allows us to conclude that function $u_\ep:=\lim_{i\to\infty}u_i$ is continuous and satisfies \eqref{eps-exist-cont-est}.

 In order to show the uniqueness, let us suppose that $u^1$ and $u^2$ solve \eqref{eps-exist-cont-est} and $u^1\not\equiv u^2$. Set
 \[
  \Om':=\{x\in \Om: u^1(x)-u^2(x)=\sup_{z\in \overline{\Om}} (u^1(z)-u^2(z))=M>0\}.
 \]
 Furthermore, note that it is enough to consider $\sup_{z\in \overline{\Om}} (u^1(z)-u^2(z))$ instead of the supremum for absolute values of $u^1-u^2$, as the case $u^2-u^1$ follows by the symmetric argument. The definition of $\Om'$ immediately implies that $\Om'$ is non-empty. Moreover, let us notice that $\Om'$ contains a ball $B(x, \ep)$. Indeed, let us consider two cases.

 {\sc Case 1.} $\Om'\setminus \Gamma_{\ep,\ep}\not=\emptyset$. Then for $x\in \Om'\setminus \Gamma_{\ep,\ep}$ it holds that
 \begin{equation}\label{eq11-exist-cont}
  0=u^1(x)-u^2(x)-M=\vint_{B(x,\ep)} \left(u^1(y)-u^2(y)-M\right) d\mu(y)\leq 0.
 \end{equation}
 Hence, $u^1-u^2\equiv M$ in $B(x, \ep)$. Choose a point $z\in \partial \Om$ and some point $x'\in \Om$ close enough to $z$. Since $\Om$ is path-connected, there exists a continuous curve $\gamma$ joining $x$ and $x'$ in $\Om$. By the compactness of $|\gamma|$ we may find a finite cover $\mathcal{C}=\{B_i\}_{i=1}^{N}$ of $\gamma$ by balls centered at points $x_i\in\gamma$ with radii $\ep/2$ such that $x_1=x$ and $x_N=x'$. Let $B(x'', \ep)$ be a ball in $\mathcal{C}$ with $x''\in \gamma$ and $x''\in B(x,\ep/2)\cap B(x_2,\ep/2)$. We apply reasoning at \eqref{eq11-exist-cont} to $B(x'', \ep)$ using again the mean value property for $u^1$ and $u^2$ and obtain that $u^1-u^2\equiv M$ in $B(x'', \ep)$. We continue this procedure along $\gamma$ till we reach first point $x'''\in\gamma$, such that $x'''\in \Om\setminus\Gamma_{\ep,\ep}\cap B(x_i, \ep)$ for some $2\leq i \leq N-1$. Then by the definition of $u_\ep$ in \eqref{eps-exist-cont-est} we get
 \begin{equation}\label{eq1111-exits-cont}
  0=u^1(x''')-u^2(x''')-M\leq\frac{\dist(x''',\Gamma_{\ep})}{\ep}\vint_{B(x''',\ep)} \left(u^1(y)-u^2(y)-M\right) d\mu(y)\leq 0.
 \end{equation}
   Thus, by repeating the last step at most once more, we have approached $\partial \Om$ obtaining a contradiction with the fact that $u^1|_{\partial \Om}=g=u^2|_{\partial \Om}$. Namely, for $z\in \partial \Om$ it holds that $u^1(z)-u^2(z)=0$, even though for points $y\in U\cap \Om$ for an arbitrarily small neighborhood $U$ of $z$ we have that $u^1(y)-u^2(y)=M>0$, contradicting continuity of $u^1$ and $u^2$.

  {\sc Case 2.} $\Om'\setminus \Gamma_{\ep,\ep}=\emptyset$. Then, since $\Om'$ is non-empty, there exists
   $x\in \Om\cap\Gamma_{\ep,\ep}$ and the above procedure simplifies. In fact we immediately reach the contradiction, since for $z\in \partial \Om\cap B(x,\ep)$ on one hand we have that $u^1(z)-u^2(z)=0$, but on the other hand for points $y\in U\cap \Om$ for an arbitrarily small neighborhood $U$ of $z$ it holds that $u^1(y)-u^2(y)=M>0$   by \eqref{eq1111-exits-cont} applied for $x''':=x$.

   The proof of the uniqueness and the whole proof of the theorem are, therefore, completed.
 \end{proof}

 We close this section with the following result showing that if we know that a Dirichlet problem has a continuous subharmonic solution, then the weakly harmonic solution exists and satisfies the same continuous boundary data.

\begin{theorem}\label{sub-Dir}
Let $(X,d,\mu)$ be a metric measure space satisfying the $\delta$-annular decay condition for some $\delta\in(0,1]$. Let $\Omega$ be a bounded domain in $X$ and consider a continuous function $g: \partial \Omega \rightarrow \mathbb{R}$. If there is a continuous subharmonic function $v \in S_-\harm(\Omega) \cap C(\overline{\Omega })$ such that $v|_{\partial \Om}=g$, then there exists a weakly harmonic function $u\in \harm(\Omega) \cap C(\overline{\Omega})$ such that $u|_{\partial \Omega} = g$.
\end{theorem}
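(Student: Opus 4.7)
The plan is to employ a Perron-type construction using $v$ as a subbarrier. Define the Perron class
$$\mathcal{F} := \{ w \in \subh(\Om) \cap C(\overline{\Om}) : w|_{\partial \Om} \leq g \}.$$
The hypothesis $v \in \mathcal{F}$ ensures the family is nonempty. The weak maximum principle for continuous subharmonic functions (whose proof proceeds exactly as in Corollary~\ref{weak-max-pr}) gives $w \leq \sup_{\partial \Om} g$ for every $w \in \mathcal{F}$, and $\mathcal{F}$ is upward directed because pointwise maxima of two subharmonic functions remain subharmonic (note $\max(w_1, w_2)(x) \leq \max(\vint w_1, \vint w_2) \leq \vint \max(w_1, w_2)$). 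Put
$$u(x) := \sup_{w \in \mathcal{F}} w(x), \qquad x \in \overline{\Om}.$$
Then $v \leq u \leq \sup g$ on $\overline{\Om}$ and $u|_{\partial \Om} = g$ pointwise.

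First I would show $u$ is continuous on $\overline{\Om}$. Interior continuity should come from a uniform H\"older estimate on $\mathcal{F}$, obtainable by adapting the proof of Theorem~\ref{thm-ann-dec} (the $\delta$-annular decay yields a one-sided version of its pointwise estimates for subharmonic functions, uniformly in the family since the latter is uniformly bounded). Continuity at a boundary point $z_0 \in \partial \Om$ splits into the lower bound $\liminf_{x \to z_0} u(x) \geq g(z_0)$, which is immediate from $u \geq v$ together with continuity of $v$, and the matching upper bound, which requires a superbarrier at $z_0$. Such a barrier can be constructed locally from constants and a continuous superharmonic extension of $g(z_0) + \eta$ on a small neighbourhood of $z_0$, then combined with the comparison principle of Corollary~\ref{comp-pr-compact} to dominate every $w \in \mathcal{F}$ near $z_0$.

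Next I would prove $u$ is weakly harmonic by contradiction. Suppose there is $x_0 \in \Om$ such that the mean value property for $u$ fails at $x_0$ for every admissible radius; since the continuous upward directed supremum of continuous subharmonic functions is itself subharmonic, this forces the strict inequality $\vint_{B(x_0, r)} u \, d\mu > u(x_0)$ for every $r$ with $B(x_0, r) \Subset \Om$. On a small ball $B = B(x_0, R) \Subset \Om$, apply Theorem~\ref{eps-exist-cont} with continuous boundary data $u|_{\partial B}$ to obtain an $\epsilon$-mean-value solution $h_\epsilon$ on $\overline{B}$. An $\epsilon$-minimum principle applied to $h_\epsilon - u$ (which is $\epsilon$-superharmonic in the inner part of $B$ because $u$ is subharmonic) gives $h_\epsilon \geq u$ on $B$, and the strict subharmonic deficit at $x_0$ forces $h_\epsilon(x_0) > u(x_0)$. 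Using the H\"older estimates of Theorem~\ref{thm-ann-dec} for equicontinuity on compacta of $B$, extract a uniformly convergent subsequence $h_{\epsilon_n} \to h$; extending $h$ by $u$ outside $B$ yields a continuous function $H$ on $\overline{\Om}$ with $H|_{\partial \Om} = g$ and $H(x_0) > u(x_0)$, contradicting the maximality of $u$ as soon as $H$ is shown to lie in $\mathcal{F}$.

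The principal obstacle is precisely this last step: verifying that the patched function $H$ is genuinely subharmonic in the sense of Definition~\ref{defn-sub-super}, i.e.~that its mean-value inequality holds for \emph{all} admissible radii, not only those appearing in the $\epsilon$-scheme. The difficulty is that each $h_\epsilon$ satisfies the mean-value identity only at radius $\epsilon$, so a priori the limit $h$ need not be (sub)harmonic at all scales. The key tool is the $\delta$-annular decay property: it guarantees that the map $r \mapsto \vint_{B(x, r)} h \, d\mu$ is continuous in $r$, and combined with the strict subharmonic deficit inherited from $u$ at $x_0$, an intermediate-value argument should produce an exact mean-value identity for $h$ at some positive radius. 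Making this last reduction rigorous, and in particular ensuring that the $\epsilon$-mean-value property is preserved as an honest subharmonic inequality across all scales for the limit, is the heart of the proof.
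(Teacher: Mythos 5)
Your Perron-envelope approach is genuinely different from the paper's argument, but as it stands it has several gaps that are not repairable from the theorem's hypotheses. First, the interior continuity of $u=\sup_{w\in\mathcal F}w$ cannot be obtained from a ``uniform H\"older estimate on $\mathcal F$'': the proof of Theorem~\ref{thm-ann-dec} uses the mean value \emph{equality} at two different radii (radius $r$ at $x$ and $r+2d(x,y)$ at $y$) to rewrite $f(x)-f(y)$ as a difference of averages, and for subharmonic functions one only has one-sided inequalities, so no two-sided modulus of continuity comes out; indeed a uniformly bounded family of continuous subharmonic functions is in general not equicontinuous (already in $\R^2$ with Lebesgue measure, truncations of $c\log|x-x_0|$ give a counterexample), so the envelope could a priori fail to be continuous, let alone subharmonic. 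Second, the boundary step requires a superbarrier at every $z_0\in\partial\Om$, which you assert can be ``constructed locally''; the theorem makes no regularity assumption on $\partial\Om$ and no solvability assumption on balls (contrast Theorems~\ref{glowne} and~\ref{glowne-2}, where barriers are an explicit hypothesis or are manufactured from assumed solvability), so this step is unsupported. Third, the concluding contradiction argument leans on extracting a harmonic-at-all-scales limit from the $\ep$-scheme of Theorem~\ref{eps-exist-cont}; nothing in the paper establishes that those limits satisfy the mean value inequality at all radii, and you correctly identify this as the unresolved heart of your argument.

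The paper's proof avoids all of these issues by a much more direct route: it iterates the single averaging operator $Tu(x)=\vint_{B(x,r_x)}u\,d\mu$ with the \emph{distinguished radius} $r_x=\tfrac12\dist(x,\partial\Om)$, starting from $u_0=v$. Subharmonicity of $v$ and monotonicity of $T$ give an increasing sequence $u_n=Tu_{n-1}$, uniformly bounded by $\|v\|_{L^\infty(\Om)}$; the $\delta$-annular decay property is used only to show that $T$ maps $C_b(\Om)$ to $C_b(\Om)$; and---this is the key idea you are missing---because $r_y\to0$ as $y\to\partial\Om$, the operator $T$ automatically preserves continuity up to the boundary and the boundary values $g$, so no barriers are needed. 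Uniform convergence (as in \cite{lps}) and monotone convergence then show the limit $u$ satisfies $u(x)=\vint_{B(x,r_x)}u\,d\mu$, which is exactly weak harmonicity since one admissible radius per point suffices. If you want to salvage a Perron-type argument, you would need at least the additional hypotheses of Theorem~\ref{glowne} (solvability on balls, to perform harmonic modifications in place of your equicontinuity claim) and barriers at the boundary, which would prove a different, stronger statement than the one at hand.
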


\begin{proof}
Denote by $C_b( \Omega)$ a space of bounded continuous functions on $\Omega$. Let us define an operator $T: C_b(\Omega) \rightarrow C_b(\Omega)$ given by
\[
  Tu (x) = \dashint_{B(x,r_x)} u(y) d \mu (y)\ ,
 \]

where $r_x = \frac{1}{2}\textrm{dist}(x,\partial \Omega)$.

In order to see that $T$ is well defined, let us consider any $u \in C_b(\Omega)$ and denote $M:=\|u\|_{L^\infty(\Om)}$. The standard computations then imply that
\begin{align}
|Tu(x)-Tu(y)|&\leq \frac{1}{\mu(B(x,r_x))} \int_{B(x,r_x) \vartriangle B(y,r_y)}
|u(z)| d\mu(z) + \frac{|\mu(B(x,r_x))-\mu(B(y, r_y))|}{\mu(B(x,r_x)) \mu(B(y,
r_y))}\left|\int_{B(y,r_y)} u(z)d\mu(z)\right| \nonumber \\
 &\leq M\left(\frac{\mu(B(x,r_x) \vartriangle B(y,r_y))}{\mu(B(x,r_x))}+
\frac{\mu(B(x,r_x) \vartriangle B(y,r_x))}{\mu(B(x,r_x)) \mu(B(y, r_y))}\mu(B(y,
r_y))\right) \nonumber \\
&\leq 2M \frac{ \mu( B(x,r_x) \bigtriangleup B(y,r_y) )}{ \mu (B(x,r_x))}. \label{eq:szacowanie}
\end{align}

Moreover,
\[ B(x, \min \{r_y - d(x,y), r_x \} ) \subset B(x,r_x) \cap B(y,r_y)  \]
and
\[ B(x,r_x) \cup B(y,r_y)
\subset B(x,\max\left\{r_x ,r_y \right\} + d(x,y)). \]

Hence,
\begin{equation} \label{eq:pierscien}
B(x,r_x) \bigtriangleup B(y,r_y) \subset  B(x,\max\left\{r_x ,r_y \right\} + d(x,y)) \setminus B(x, \min \{r_y - d(x,y), r_x \} ).
\end{equation}

We combine (\ref{eq:szacowanie}) and (\ref{eq:pierscien}) and since, $r_y \rightarrow r_x$ as $y \rightarrow x$,
we get that the $\delta$-annular decay property of $X$ implies $Tu \in C( \Omega)$, cf. Definition~\ref{defn-an-dec}. By basic properties of the mean value one also obtains that $\|Tu\|_{\infty} \leq M$.

\bigskip

We use operator $T$ to construct the following sequence of functions:
\begin{itemize}
\item[(1)] $u_0:=v$ ,
\item[(2)] $u_n := Tu_{n-1}$ for $n=1,2,\ldots$.
\end{itemize}

We easily see that all $u_n \in C_b(\Omega)$ and $\|u_n \|_{L^\infty(\Om)} \leq \| v \|_{L^\infty(\Om)}$.

Next, observe that every $u_n$ is weakly subharmonic in $\Om$ with admissible radii $r^x = r_x$.
Indeed, $u_0 = v$ is subharmonic, so by the definition
\[
u_0(x)\leq \dashint_{B(x,r_x)} u_0 (y) d \mu (y) = T u_0 (x).
\]
Moreover, note that if $v \leq w$, then $Tv \leq Tw$. Hence, by induction we get
\[
u_n(x) \leq u_{n+1}(x) = T u_n(x) = \dashint_{B(x,r_x)} u_n (y) d \mu (y).
\]
The above reasoning shows also that sequence $\{u_n\}_{n=0}^{\infty}$ is increasing at every $x\in \Om$.

We extend sequence $\{u_n\}_{n=0}^{\infty}$ to $\overline{\Omega}$ in such a way that $u_n|_{\partial \Omega} = g$ for all $n=0,1,\ldots$. Indeed, this is possible for $u_0$ because $v$ is continuous up to the boundary and $v|_{\partial \Omega } = g$. In order to see that the same holds true for $u_n$ for $n\geq 1$ let us first consider any $w \in C(\overline{\Omega})$ with $w|_{\partial \Omega}=g$. Then for all $x \in \partial \Omega$ and $\epsilon > 0$ there exists $\delta > 0$ such that
\[
 \left| w(y) - g(x) \right| < \epsilon,
\]
for $y \in \Omega \cap B(x,\delta)$. Hence, for $y \in B(x,\frac{\delta}{2}) \cap \Omega$ we have
\[
 \left|Tw(y) - g(x) \right| \leq \dashint_{B(y,r_y)} \left| w(z) - g(x) \right| d \mu (z) < \epsilon.
\]
In a consequence, $Tw \in C(\overline{\Omega})$ and $Tw|_{\partial \Omega} = g$. We apply this reasoning with $w=u_n$ for $n=0,1,\ldots$ and obtain that $\{u_n\}_{n=1}^{\infty}$ is bounded increasing sequence of continuous functions on compact set, such that all $u_n|_{\partial \Omega} = g$.

\smallskip

We are now in a position to define the following Borel function:
\[
 u(x):= \lim_{n \rightarrow \infty } u_n(x)\quad\hbox{ for }x\in \Om.
\]
One can show that $\{u_n\}_{n=1}^{\infty}$ converges uniformly in $\Omega$ and the reasoning is similar to the one in the proof of Theorem 2.1 in \cite{lps}, cf. presentation in the proof of Theorem~\ref{eps-exist-meas} above. Since $u_n|_{ \partial \Omega } = g$ for all $n$, the sequence converges uniformly also in $\overline{\Omega}$. Hence, $u \in C(\overline{\Omega})$ and $u|_{ \partial \Omega } = g$. By employing the monotone convergence theorem to the sequence
\[
 u_{n+1}(x) = \dashint_{B(x,r_x)} u_n(y) d \mu (y)
 \]
we get that $u \in \wharm( \Omega )$.

\end{proof}

\subsection{The Perron method for harmonic functions}

In this section we employ the Perron method to show that for metric spaces with metrically continuous measures the Dirichlet problem in a domain $\Om$ with continuous boundary data has a strongly harmonic solution provided solvability of the appropriate Dirichlet problem on all balls whose closures are contained in $\Om$, see Theorem~\ref{glowne}. Furthermore, in Theorem~\ref{glowne-2} we prove that if, additionally at every $x\in \partial \Om$ there exists a barrier, then solvability on balls is also a necessary condition for solvability of the Dirichlet problem in $\Om$.

We begin with some auxiliary definitions and results. First, we generalize subharmonic functions, cf. Definition~\ref{defn-sub-super}. For the sake of the readers convenience we decided to present this notion here instead in Section~\ref{Sec-harm}. Such a generalization together with the above discussion of the Dirichlet problem will then be applied in the studies of Perron solutions.

\begin{definition}\label{def-loc-subh}
Let $f$ be a continuous function on $\Omega$. We say that $f$ is \emph{locally subharmonic in $\Omega$} if for every $x$ there exists $R_x>0$ such that $\overline{B(x,R_x)} \subset \Omega $ and for $0<\rho<R_x$
\begin{eqnarray}\label{sub}
f(x) \leq \frac{1}{ \mu (B(x,\rho)) } \int_{B(x,\rho)} f(z) d \mu(z).
\end{eqnarray}
\end{definition}

\begin{prop}\label{loc-subh}
 The strong and weak maximum principles hold for locally subharmonic functions in $\Om$.
\end{prop}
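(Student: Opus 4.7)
The plan is to mimic the reasoning from Proposition~\ref{strong-max-pr} and Corollary~\ref{weak-max-pr}, exploiting the observation that the crucial step in that proof used the mean value equality on only one ball around each maximizer. First I would establish the strong maximum principle for locally subharmonic functions on open connected $\Om$; the weak maximum principle will then follow exactly as in the strongly harmonic case, by passing to the connected component of an interior maximum point.

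For the strong maximum principle, assume $\Om$ is open and connected and that $f$ attains its supremum $M := \sup_{\Om} f$ at some interior point. Set $\Om' := \{x \in \Om : f(x) = M\}$. By continuity of $f$, the set $\Om'$ is relatively closed in $\Om$ and by hypothesis non-empty; the task is to show $\Om'$ is also open. Fix $x_0 \in \Om'$ and let $R_{x_0} > 0$ be the radius furnished by Definition~\ref{def-loc-subh}. For every $0 < \rho < R_{x_0}$, the local subharmonic inequality together with $f \leq M$ on $\Om$ yields
$$
M \,=\, f(x_0) \,\leq\, \frac{1}{\mu(B(x_0,\rho))}\int_{B(x_0,\rho)} f\,d\mu \,\leq\, M,
$$
so $\int_{B(x_0,\rho)} (M-f)\,d\mu = 0$. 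Since $M-f$ is non-negative and continuous on $B(x_0,\rho)$, and every ball in $X$ has strictly positive $\mu$-measure, this forces $f \equiv M$ on $B(x_0,\rho)$ (otherwise continuity would give a small sub-ball on which $M - f$ is bounded below by a positive constant, contradicting the vanishing of the integral). Letting $\rho \nearrow R_{x_0}$ shows $B(x_0, R_{x_0}) \subset \Om'$, so $\Om'$ is open. Connectedness of $\Om$ then forces $\Om' = \Om$, i.e. $f \equiv M$.

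The weak maximum principle follows in the standard way: if $\Om$ is bounded and $f \in C(\overline{\Om})$ is locally subharmonic, then $f$ attains $\sup_{\overline{\Om}} f$ at some $x_0 \in \overline{\Om}$; if $x_0 \in \partial \Om$ we are done, and otherwise, assuming $X$ is locally connected (cf. the weak maximum principle stated just before Corollary~\ref{weak-max-pr}), the component $\Om(x_0)$ of $\Om$ containing $x_0$ is open with $\partial \Om(x_0) \subset \partial \Om$; applying the strong maximum principle to $\Om(x_0)$ gives $f \equiv f(x_0)$ on $\overline{\Om(x_0)}$, and the value $f(x_0)$ is thus attained on $\partial \Om$ as well. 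The only genuine subtlety I anticipate is the elementary step that a non-negative continuous function with vanishing integral on a ball of positive measure must vanish identically on that ball; this is where the standing assumption $0 < \mu(B) < \infty$ for all balls, combined with continuity of $f$, is essential. Beyond this the argument is purely topological.
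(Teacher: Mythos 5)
Your proof is correct and follows essentially the same route as the paper: define $\Om'=\{x\in\Om: f(x)=\sup_\Om f\}$, note it is relatively closed by continuity, use the local sub-mean-value inequality on a single ball $B(x_0,\rho)$ with $\rho<R_{x_0}$ to force $\int_{B(x_0,\rho)}(M-f)\,d\mu=0$ and hence $f\equiv M$ there, conclude openness and then $\Om'=\Om$ by connectedness, and derive the weak principle from the strong one exactly as for harmonic functions. Your explicit justification that a nonnegative continuous function with vanishing integral over a ball must vanish there (via positivity of the measure of small sub-balls) is a detail the paper leaves implicit, but it is the intended argument.
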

\begin{proof}
The proof of the first part of the proposition is similar to the one for the strong maximum principle for harmonic functions, see Theorem 3.1 in Gaczkowski-G\'orka~\cite{GG} and
Proposition~\ref{strong-max-pr}. Therefore, here we only provide a sketch of the reasoning.

Let $\Om'=\{x\in \Om: f(x)=\sup_{\Om}f\}$. Set $\Om'$ is closed. To see that it is open, let $x_0\in \Om'$ and $B(x_0, r)\subset \Om$ for $r<R_{x_0}$ as in Definition~\ref{def-loc-subh}. Then by the definition of locally subharmonic functions we have that
\[
 \int_{B(x_0, r)} (M-f(z)) d \mu(z)\leq \mu(B(x_0, r))(M-f(x_0))=0.
\]
Since $f\leq M$ in $\Om$, we have that $f\equiv M$ on $B(x_0, r)$ and $\Om'$ is open, hence $\Om'=\Om$.
As for the proof of the weak maximum principle, it follows from the strong one in the same way as in the case of harmonic functions, cf. the proof of Corollary~\ref{weak-max-pr}.
\end{proof}
\begin{prop}\label{prop-max-subh}
Maximum of two locally subharmonic functions in a domain $\Om$ is locally subharmonic in $\Om$.
\end{prop}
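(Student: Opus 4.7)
The plan is a direct verification from Definition~\ref{def-loc-subh}. Let $f_1, f_2 \in C(\Om)$ be locally subharmonic and set $h := \max(f_1, f_2)$. Continuity of $h$ follows immediately from continuity of $f_1$ and $f_2$, so the only thing to check is the submean value inequality \eqref{sub} at every point $x \in \Om$ for some positive radius.

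Fix $x \in \Om$. By Definition~\ref{def-loc-subh} applied to $f_1$ and $f_2$ separately, there exist $R_x^1 > 0$ and $R_x^2 > 0$ with $\overline{B(x, R_x^i)} \subset \Om$ such that the submean value property holds for $f_i$ on all balls $B(x, \rho)$ with $0 < \rho < R_x^i$. Set $R_x := \min(R_x^1, R_x^2) > 0$.

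The key observation is that $h(x)$ equals at least one of $f_1(x), f_2(x)$, while pointwise $f_i \leq h$ on $\Om$. Hence, choosing $i \in \{1,2\}$ with $h(x) = f_i(x)$, for any $0 < \rho < R_x$ we obtain
\[
 h(x) = f_i(x) \leq \frac{1}{\mu(B(x,\rho))} \int_{B(x,\rho)} f_i(z)\, d\mu(z) \leq \frac{1}{\mu(B(x,\rho))} \int_{B(x,\rho)} h(z)\, d\mu(z),
\]
where the first inequality uses local subharmonicity of $f_i$ and the second uses monotonicity of the integral together with $f_i \leq h$. Since $x$ was arbitrary, $h$ is locally subharmonic in $\Om$, completing the proof. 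There is no genuine obstacle here — the argument is essentially a one-line observation combined with the freedom to choose a common admissible radius $R_x$ as the minimum of the two individual radii.
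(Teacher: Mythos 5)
Your proof is correct and is essentially identical to the paper's: both pick the index $i$ achieving the maximum at $x$, apply the submean value inequality for $f_i$, bound the integrand by $\max\{f_1,f_2\}$, and take $R_x$ as the minimum of the two admissible radii. No further comment is needed.
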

\begin{proof}
 Let $x\in \Om$ and suppose that $\max \{f(x), g(x)\}=f(x)$. Then
\[
  \max \{f(x), g(x)\}=f(x) \leq \frac{1}{\mu (B(x,\rho))} \int_{B(x,\rho)} f(z) d\mu(z) \leq \frac{1}{\mu (B(x,\rho)) } \int_{B(x,\rho)} \max\{f(z),g(z)\} d\mu(z),
\]
provided that $\rho <R = \min \{R_{f,x}, R_{g,x}\}$, where $R_{f,x}$ and $R_{g,x}$ are radii $R_x$ from Definition~\ref{def-loc-subh} for functions $f$ and $g$, respectively.
\end{proof}

Let $\Om \subset X$ and $g:\partial \Om\to \R$ be a measurable function. In what follows by $H_{\Om}[g]$ we denote a solution to the Dirichlet problem in $\harm(\Om, \mu)$ with a boundary data $g$, provided that it exists.

\begin{definition}\label{lower-Perron}
Let $\Om\subsetneq X$ be a domain and $g:\partial \Omega\to \R$ be a continuous function. We denote by $S_{g}$ the family of all locally subharmonic functions in $\Om$ such that $f \leq g$ on $\partial \Omega$. Define a \emph{lower Perron solution $P_{\Om}[g]$ in $\Om$} of $g$ as follows:
\[
P_{\Om}[g](x) := \sup_{f \in S_{g}} f(x),\quad x\in \Om.
\]
\end{definition}
If the underlying domain $\Om$ is fixed or clear from the context of discussion, then for the sake of simplicity we will write $P_{\Om}[g]=P[g]$. Note that under our assumption that $X$ is proper, we have that $\partial \Om$ is compact and balls are relatively compact. However, in what follows we need to assume additionally that balls are connected. This assumption will be used in the proof of Theorem~\ref{glowne}.

\begin{remark}\label{lowerP-bdd}
 Note that by the weak maximum principle in Corollary~\ref{weak-max-pr} it holds that
 \[
  P_{\Om}[g]\leq \sup_{x\in \partial \Om}g(x).
 \]
\end{remark}
\begin{definition}\label{def-harm-mod}
Let $\Om \subset X$ be an open set and $f:\Omega \rightarrow \mathbb{R}$ be a continuous function. Suppose that $x_0 \in \Omega$, $r>0$ are such that $\overline{B(x_0,r)} \subset \Omega$. Denote by $g=f|_{\partial B(x_0,r)}$. The following function is called \emph{a harmonic modification of $f$ in $B(x_0,r)$}:
\begin{equation}
\bar{f}_{x_0,r}(x)=
\begin{cases}
&f(x),\quad x\in \Omega \setminus B(x_0,r)\\
&H_{B(x_0,r)}[g], \quad x\in B(x_0,r).
\end{cases}
\end{equation}
\end{definition}

The harmonic modification appears in the literature also as \emph{the Poisson modification} and has several variants e.g. for superminimizers and superharmonic functions in the setting of Newtonian spaces, see e.g. Sections 8.7 and 10.9 in Bj\"orn--Bj\"orn~\cite{bb}.

The following observations easily follow from the weak maximum principle for locally subharmonic functions (Proposition~\ref{loc-subh}) and the definition of the harmonic modification.
\begin{lemma}\label{uwagamod}
Let $f$ be a locally subharmonic function in $\Om$, then  $\bar{f}_{x,r} \geq f$ for any $x\in \Om$ and $\overline{B(x, r)}\subset \Om$.
\end{lemma}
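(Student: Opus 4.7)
The plan is to reduce the assertion to the weak maximum principle for locally subharmonic functions (Proposition~\ref{loc-subh}), by introducing the difference of $f$ and its harmonic modification on the ball where the modification actually changes $f$. Outside $\overline{B(x_0,r)}$ the inequality $\bar f_{x_0,r}\geq f$ is in fact an equality by the very definition of harmonic modification, so the whole issue lies on $\overline{B(x_0,r)}$.

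On $\overline{B(x_0,r)}$ I would define the auxiliary function $v:=f-H_{B(x_0,r)}[g]$, where $g:=f|_{\partial B(x_0,r)}$. By hypothesis $f$ is continuous on $\Om$ and by the convention on $H_{B(x_0,r)}[g]$ (a Dirichlet solution attaining its boundary data) the function $v$ is continuous on $\overline{B(x_0,r)}$ with $v\equiv 0$ on $\partial B(x_0,r)$.

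Next I would verify that $v$ is locally subharmonic in $B(x_0,r)$ in the sense of Definition~\ref{def-loc-subh}. Pick $x\in B(x_0,r)$; since $H_{B(x_0,r)}[g]$ is harmonic, the mean value equality
\[
H_{B(x_0,r)}[g](x)=\frac{1}{\mu(B(x,\rho))}\int_{B(x,\rho)}H_{B(x_0,r)}[g](z)\,d\mu(z)
\]
holds for every $\rho<\dist(x,\partial B(x_0,r))$, while $f$ satisfies the subharmonic inequality \eqref{sub} for some $R_x>0$. Taking the minimum of the two radii, the subtraction preserves the subharmonic inequality for $v$. Since balls are connected by our standing assumption, $B(x_0,r)$ is a domain, so Proposition~\ref{loc-subh} applies and yields
\[
\sup_{\overline{B(x_0,r)}} v \;\leq\; \sup_{\partial B(x_0,r)} v \;=\; 0,
\]
i.e.\ $f\leq H_{B(x_0,r)}[g]=\bar f_{x_0,r}$ on $\overline{B(x_0,r)}$. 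Together with the equality outside the ball, this gives the claim.

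The only point that requires a little care is making sure the weak maximum principle can be applied to $v$, which in turn requires $v$ to be locally subharmonic and continuous up to $\partial B(x_0,r)$. Continuity is immediate from that of $f$ and the Dirichlet solvability convention; the local subharmonicity of $v$ is the one technical step, but it is essentially free because the harmonic modification enjoys the mean value equality on every sufficiently small ball, so no compatibility between the radii $R_x$ for $f$ and those available for $H_{B(x_0,r)}[g]$ needs to be arranged beyond taking a minimum.
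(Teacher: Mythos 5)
Your proof is correct and follows exactly the route the paper intends: the paper states that Lemma~\ref{uwagamod} ``easily follows from the weak maximum principle for locally subharmonic functions (Proposition~\ref{loc-subh}) and the definition of the harmonic modification,'' and your argument --- applying that principle to $v=f-H_{B(x_0,r)}[g]$, which is locally subharmonic in $B(x_0,r)$ and vanishes on $\partial B(x_0,r)$ --- is precisely the detailed version of that sketch. The points you flag (continuity of $v$ up to the boundary and connectedness of balls so that the maximum principle applies) are exactly the hypotheses the paper relies on in this section.
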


\begin{lemma}\label{Lemmonotone}
Let $g:\partial \Omega \to \R$ be a continuous function and $f,h \in S_g$ be such that $f \geq h$ in $\Om$. Then $\bar{f}_{x, r} \geq \bar{h}_{x, r}$ for any $x\in \Om$ and $\overline{B(x, r)}\subset \Om$.
\end{lemma}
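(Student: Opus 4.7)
The plan is to split the argument according to the piecewise definition of the harmonic modification from Definition~\ref{def-harm-mod}, and treat the exterior and the interior of the ball $B:=B(x,r)$ separately.

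For any $y\in \Om\setminus B$, the definitions give $\bar{f}_{x,r}(y)=f(y)$ and $\bar{h}_{x,r}(y)=h(y)$, so the hypothesis $f\geq h$ in $\Om$ immediately yields the desired inequality on this region. Only the interior piece requires actual work.

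For $y\in B$, the two modifications coincide with the harmonic solutions $u:=H_{B}[f|_{\partial B}]$ and $v:=H_{B}[h|_{\partial B}]$, which by the meaning of the notation $H_{B}[\cdot]$ belong to $\harm(B,\mu)\cap C(\overline{B})$. Continuity of $f,h$ in $\Om$ together with $\overline{B}\subset \Om$ ensures that $f|_{\partial B}$ and $h|_{\partial B}$ are continuous, and the pointwise inequality $f\geq h$ on $\Om$ restricts to $u|_{\partial B}=f|_{\partial B}\geq h|_{\partial B}=v|_{\partial B}$. Since balls are assumed to be connected (as noted just before the lemma) and are relatively compact by properness of $X$, the ball $B$ is a domain to which the comparison principle for strongly harmonic functions, Corollary~\ref{comp-pr-compact}, applies. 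Invoking it with $u$ and $v$ gives $u\geq v$ throughout $B$, which is precisely $\bar{f}_{x,r}(y)\geq \bar{h}_{x,r}(y)$ for $y\in B$. Combining the two regions completes the proof.

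The argument is essentially routine once the tools are lined up; the only subtlety is ensuring the comparison principle is applicable on the ball, i.e. that $u$ and $v$ are continuous up to $\partial B$ and that the boundary inequality is preserved. Both points are built into the definition of $H_{B}[\cdot]$ together with the standing hypothesis that the harmonic modification exists, so no separate estimate is required.
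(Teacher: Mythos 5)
Your proof is correct and follows essentially the same route as the paper, which states the lemma without a written proof and only remarks that it follows from the weak maximum principle for locally subharmonic functions together with the definition of the harmonic modification: outside $B(x,r)$ the inequality is the hypothesis $f\geq h$, and inside the ball one compares the two Dirichlet solutions via their boundary values. Your invocation of the comparison principle (Corollary~\ref{comp-pr-compact}) on the connected ball is just the weak maximum principle applied to the harmonic difference $\bar{h}_{x,r}-\bar{f}_{x,r}$, so the two arguments coincide in substance.
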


\begin{prop}\label{prop-sub-hmod}
Let $f$ be a locally subharmonic function in $\Om$, then harmonic modification $\bar{f}_{x,r}$ is locally subharmonic in $\Om$ for any $x\in \Om$ and $\overline{B(x, r)}\subset \Om$.
\end{prop}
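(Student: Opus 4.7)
The plan is to verify the defining inequality \eqref{sub} for $\bar{f}_{x_0,r}$ pointwise in $\Om$, splitting into three geometric cases according to the position of a point $y\in\Om$ relative to the ball $B(x_0,r)$ on which the modification was performed.

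First I would handle $y\in B(x_0,r)$. By Definition~\ref{def-harm-mod}, $\bar{f}_{x_0,r}$ coincides with the strongly harmonic function $H_{B(x_0,r)}[g]$ on the whole ball $B(x_0,r)$; choosing $R_y>0$ so small that $\overline{B(y,R_y)}\subset B(x_0,r)$, the mean value equality (hence the subharmonic inequality) holds for every $\rho<R_y$.

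Next I would handle $y\in\Om\setminus\overline{B(x_0,r)}$. Here $\bar f_{x_0,r}$ agrees with $f$ on a neighborhood of $y$. Pick $R_y$ no larger than both the radius from Definition~\ref{def-loc-subh} for $f$ at $y$ and $\tfrac12\,\dist(y,\overline{B(x_0,r)})$. Then for $\rho<R_y$ the integrands of the two mean values coincide, and local subharmonicity of $f$ transfers to $\bar{f}_{x_0,r}$.

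The remaining, and only nontrivial, case is $y\in\partial B(x_0,r)$. Here $\bar{f}_{x_0,r}(y)=f(y)$ by Definition~\ref{def-harm-mod}, but the ball $B(y,\rho)$ will generally meet both regions $B(x_0,r)$ and $\Om\setminus B(x_0,r)$, so one cannot directly reduce to $f$. The key is to invoke Lemma~\ref{uwagamod}, which gives $\bar{f}_{x_0,r}\geq f$ everywhere in $\Om$. Choosing $R_y$ as the radius supplied by the local subharmonicity of $f$ at $y$ (and small enough that $\overline{B(y,R_y)}\subset\Om$), for every $\rho<R_y$ one obtains
\[
 \bar{f}_{x_0,r}(y)=f(y)\;\leq\;\frac{1}{\mu(B(y,\rho))}\int_{B(y,\rho)}f(z)\,d\mu(z)\;\leq\;\frac{1}{\mu(B(y,\rho))}\int_{B(y,\rho)}\bar{f}_{x_0,r}(z)\,d\mu(z),
\]
where the first inequality uses local subharmonicity of $f$ and the second uses the pointwise domination from Lemma~\ref{uwagamod}. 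This establishes \eqref{sub} for $\bar{f}_{x_0,r}$ at $y$, and the three cases together complete the proof. The only subtle point is the boundary case, and the entire argument there hinges on the comparison $\bar f_{x_0,r}\geq f$ already established in Lemma~\ref{uwagamod}.
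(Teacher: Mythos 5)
Your proof is correct and follows essentially the same route as the paper's: harmonicity of $H_{B(x_0,r)}[g]$ handles points inside the ball, and the comparison $\bar{f}_{x_0,r}\geq f$ from Lemma~\ref{uwagamod} combined with local subharmonicity of $f$ handles the rest. The only difference is cosmetic: you split the exterior into $\Om\setminus\overline{B(x_0,r)}$ and $\partial B(x_0,r)$, whereas the paper treats all of $\Om\setminus B(x_0,r)$ uniformly with the argument you use in your boundary case (which in fact also covers your second case, making it redundant).
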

\begin{proof}
Let $\bar{f}_{x,r}$ be a harmonic modification of $f$ in $B(x, r)$. If $y \in B(x,r)$, then by harmonicity of $\bar{f}_{x,r}$ in $B(x,r)$, we have that inequality (\ref{sub}) holds for $\rho < R_y$, where $R_y$ is small enough.
Otherwise, if $y \notin B(x,r)$, then Lemma \ref{uwagamod} implies for sufficiently small $\rho>0$, that
\[
\bar{f}_{x,r}(y) = f(y) \leq \frac{1}{ \mu (B(y,\rho)) } \int_{B(y,\rho)} f d \mu  \leq \frac{1}{ \mu (B(y,\rho)) } \int_{B(y,\rho)} \bar{f}_{x,r} d \mu.
\]
This completes the proof.
\end{proof}

\begin{definition}\label{def-barr}
Let $x_0 \in \partial \Omega$. We say that a continuous function $f:\Om\to\R$ is a \emph{barrier for $\Omega$ at $x_0$} if:
\begin{itemize}
\item[(1)] $f$ is locally subharmonic in $\Om$,
\item[(2)] $f(x_0)=0$,
\item[(3)] $f(x)<0$ for every $x\in \partial \Omega \setminus \{x_0\}$.
\end{itemize}
\end{definition}

The notion of a barrier plays an important role in the potential analysis, see e.g. Chapter 11 in Bj\"orn--Bj\"orn~\cite{bb}. We point out that, upon slight modification of Definition~\ref{def-barr}, one can show that existence of a barrier at a given boundary point $x_0$ of a domain is equivalent to the fact that $x_0$ is a regular point, see Theorems 11.2 and 11.11 in \cite{bb}. Recall, that a point $x_0$ is regular if
\[
 \lim_{\Om\ni y\to x}H_{\Om}g(y)=g(x),
\]
for all continuous boundary data $g:\partial \Om\to \R$. From that perspective the following result provides, among other results, conditions on the boundary enabling us to conclude that all boundary points are regular.
\begin{theorem}\label{glowne}
Let $(X, d, \mu)$ be a metric space with metrically continuous measure $\mu$ and such that balls are connected. Let $\Omega \subset X$ be a domain and $g:\partial \Om \to \R$ be a continuous function. If for every ball $B$ in $\Om$ such that $\overline{B}\subset\Om$ there exists a solution of the Dirichlet problem with a boundary data $g|_{\partial B}$, then $P_{\Om}[g]$ is a harmonic function on $\Omega$. Moreover, if  $\Omega$ has a barrier at $x_0 \in \partial \Omega$, then
\begin{eqnarray*}
\lim_{\Om \ni y \rightarrow x_0} P_{\Om}[g](y) = g(x_0).
\end{eqnarray*}
\end{theorem}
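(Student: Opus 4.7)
The plan is to implement the classical Perron strategy in two stages. First I would show that $P_{\Om}[g]$ is harmonic on every ball $B:=B(x_0,r)$ with $\overline{B}\subset\Om$; harmonicity on all of $\Om$ then follows since any admissible ball $B(y,s)\Subset\Om$ sits inside such a $B$ (take $B(y,s+\tau)$ for small $\tau$). Fix such a $B$ and a countable dense subset $\{y_k\}_{k\ge 1}\subset B$. For each $k$ choose $f_{k,n}\in S_g$ with $f_{k,n}(y_k)\to P[g](y_k)$ as $n\to\infty$, and form the diagonal majorant $F_n:=\max(f_{1,n},\dots,f_{n,n})$, which lies in $S_g$ by iterated use of Proposition~\ref{prop-max-subh} together with the trivial fact that $\max$ preserves the boundary inequality $\le g$. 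Pass to the harmonic modification $\bar F_n$ of $F_n$ on $B$ via Definition~\ref{def-harm-mod}; these exist precisely because of the solvability hypothesis, remain locally subharmonic in $\Om$ by Proposition~\ref{prop-sub-hmod}, and still satisfy $\bar F_n\le g$ on $\partial\Om$, so $\bar F_n\in S_g$. By Lemma~\ref{uwagamod} and Lemma~\ref{Lemmonotone} the sequence $\bar F_n$ is pointwise nondecreasing, and by Remark~\ref{lowerP-bdd} it is bounded above by $\sup_{\partial\Om}g$.

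Define $u(x):=\lim_{n}\bar F_n(x)$ on $B$. Since each $\bar F_n$ is harmonic on $B$, the monotone convergence theorem passes the identity
\[
\bar F_n(y)=\frac{1}{\mu(B(y,s))}\int_{B(y,s)}\bar F_n\,d\mu
\]
to the limit for every $B(y,s)\Subset B$, giving $u\in\harm(B,\mu)$. By construction $u(y_k)=P[g](y_k)$ on the dense set (the inequality $\bar F_n\le P[g]$ is automatic, and $\bar F_n(y_k)\ge F_n(y_k)\ge f_{k,n}(y_k)\to P[g](y_k)$ for $n\ge k$). To upgrade this to all of $B$, fix an arbitrary $v\in S_g$, set $G_n:=\max(F_n,v)\in S_g$, and form its modification $\bar G_n$; the same monotone scheme yields a harmonic function $w$ on $B$ with $w\ge v$ on $B$, $w\ge u$ everywhere on $B$, and $w(y_k)\le P[g](y_k)=u(y_k)$ on the dense set. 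Both $u$ and $w$ are continuous by Proposition~\ref{harm-cont} (which needs only metric continuity of $\mu$) and agree on $\{y_k\}$, hence $u\equiv w$ on $B$. This gives $u\ge v$ on $B$ for every $v\in S_g$, whence $u\ge P[g]$, and the reverse inequality is built in. Thus $P[g]|_B=u|_B\in\harm(B,\mu)$.

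For the barrier statement, fix $\varepsilon>0$ and, by continuity of $g$, choose a neighborhood $U$ of $x_0$ with $|g-g(x_0)|<\varepsilon$ on $U\cap\partial\Om$. Compactness of $\partial\Om\setminus U$ and condition~(3) of Definition~\ref{def-barr} furnish $\delta>0$ with $f\le-\delta$ on $\partial\Om\setminus U$. Pick $k$ so large that $-k\delta+g(x_0)-\varepsilon\le-\sup_{\partial\Om}|g|$. Then $h_k:=kf+g(x_0)-\varepsilon$ is locally subharmonic on $\Om$ (positive scaling and additive constants preserve local subharmonicity with the same radii $R_x$) and satisfies $h_k\le g$ on $\partial\Om$: on $U\cap\partial\Om$ this uses $f\le 0$, on $\partial\Om\setminus U$ the choice of $k$. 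Hence $h_k\in S_g$, $P[g]\ge h_k$, and continuity of the barrier at $x_0$ gives $\liminf_{y\to x_0}P[g](y)\ge g(x_0)-\varepsilon$. Symmetrically, $h_k^+:=-kf+g(x_0)+\varepsilon$ is locally superharmonic and satisfies $h_k^+\ge g$ on $\partial\Om$. For any $v\in S_g$, the function $v-h_k^+$ is locally subharmonic (using the minimum of the two local radii at each point), and $\le 0$ on $\partial\Om$, so by the weak maximum principle from Proposition~\ref{loc-subh} one has $v\le h_k^+$ on $\Om$; taking the sup gives $P[g]\le h_k^+$, and hence $\limsup_{y\to x_0}P[g](y)\le g(x_0)+\varepsilon$. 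Letting $\varepsilon\to 0$ yields $\lim_{y\to x_0}P[g](y)=g(x_0)$.

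The main obstacle is the identification $u\equiv P[g]$ on $B$ \emph{without} assuming a priori continuity of $P[g]$: the countable-dense diagonal sequence only delivers equality on $\{y_k\}$, and the step that converts this into equality on all of $B$ is precisely the auxiliary envelope with $\bar G_n$, which sandwiches an arbitrary competitor $v$ inside a harmonic $w$ that can be compared with $u$ by continuity of both. A secondary point demanding care is confirming that $v-h_k^+$ falls into the class to which Proposition~\ref{loc-subh} applies, which reduces to observing that local subharmonicity (in the sense of Definition~\ref{def-loc-subh}) is preserved under sums once one takes the minimum of the two point-by-point radii.
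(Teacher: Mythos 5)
Your overall strategy coincides with the paper's: reduce to balls $B$ with $\overline{B}\subset\Om$, build an increasing sequence of harmonic modifications of subharmonic competitors, pass to a monotone limit which is harmonic and continuous by the Lebesgue convergence theorem and Proposition~\ref{harm-cont}, and then identify that limit with $P_\Om[g]$; your barrier argument is essentially identical to the paper's (the sandwich $g(x_0)-\ep+Cf\le P[g]\le g(x_0)+\ep-Cf$, your upper bound via Proposition~\ref{loc-subh} being exactly the paper's step $h+Cf<g(x_0)+\ep$ on $\overline{\Om}$, with the added, correct, compactness justification for the choice of $C$). The one genuinely different ingredient is the identification step. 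The paper arranges convergence only at the center $x$ of the ball and then, for an arbitrary competitor $u\in S_g$, compares $w$ with the modifications $h_k$ of $\max\{w_k,u\}$ through the mean value identity over the full ball $B(x,r)$, concluding $\int_B\max\{w,u\}\,d\mu\le\int_B w\,d\mu$ and hence $w\ge u$. You instead force convergence of $u$ to $P[g]$ on a countable dense subset by a diagonal construction and then compare the two continuous harmonic limits $u$ and $w$ on that dense set. Both routes work; the paper's avoids diagonalization, yours avoids the integral-comparison step and arguably makes the role of continuity more transparent.

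There is one concrete flaw to repair: as you define it, $F_n=\max(f_{1,n},\dots,f_{n,n})$ need not be nondecreasing in $n$, since $f_{k,n+1}$ need not dominate $f_{k,n}$; Lemma~\ref{Lemmonotone} therefore does not yield monotonicity of $\bar F_n$, and without monotonicity neither the pointwise limit $u$ nor the monotone convergence argument is available. The fix is immediate --- take $F_n:=\max\{f_{k,m}:1\le k\le n,\ 1\le m\le n\}$, or first replace each sequence $f_{k,\cdot}$ by its running maxima --- and the same remark applies to $G_n$. With that correction the rest of your argument goes through. (You also silently read the hypothesis ``solvable with boundary data $g|_{\partial B}$'' as solvability for the traces of the subharmonic competitors on $\partial B$, which is what the harmonic modification requires; the paper's own proof makes the identical interpretation, so this is not a gap relative to it.)
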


\begin{proof}
By the definition of harmonic functions (see Definition~\ref{defn-harm}) it is enough to show that
\[
P_{\Om}[g]:=P[g] \in \mathcal{H}(B(x,r)),
\]
for all balls $B(x,r)$ satisfying $\overline{B(x,r)} \subset \Omega$. Let us choose any of such balls and fix it.

In order to show the harmonicity of $P[g]$ in $B(x,r)$ let us take a sequence $ \{u_{k}\}_{k=1}^{\infty} \subset S_{g}$, such that
\begin{eqnarray*}
\lim_{k \rightarrow \infty } u_{k}(x) = P[g](x).
\end{eqnarray*}
Next, we define
\begin{eqnarray}
v_{k} = \max \{ u_{1}, \ldots , u_{k} \}.
\end{eqnarray}
By Proposition~\ref{prop-max-subh} it holds that $v_{k}\in S_g$ for every $k$. Let $w_k$ be a harmonic modification of $v_k$ on $B(x,r)$, i.e. $w_k=\overline{v_k}_{x,r}$. From Definition~\ref{def-harm-mod} and Proposition~\ref{prop-sub-hmod} we infer that $w_k \in S_g$.

Since $\{v_k\}_{k=1}^{\infty}$ is a monotone increasing sequence, we get by Lemma \ref{Lemmonotone} that $\{w_k\}_{k=1}^{\infty}$ is monotone increasing as well. Moreover, by Remark~\ref{lowerP-bdd} we have that
\begin{eqnarray}\label{ciagi}
  u_k(y) \leq v_k(y) \leq w_k(y) \leq P[g](y)\leq\sup_{\partial \Om} g<\infty.
\end{eqnarray}
Thus, the pointwise limit $\displaystyle w(y) = \lim_{k \rightarrow \infty} w_k(y)$ exists and from (\ref{ciagi}) we get that $w(x) = P[g](x)$ and $w \leq P[g]$ in $\Om$. 

Furthermore, since $w_k$ is a monotone sequence of harmonic functions in $B(x,r)$, we get by the Lebesgue Convergence Theorem that
\[
	w(y)= \frac{1}{ \mu (B(y,\delta)) } \int_{B(y,\delta)} w d\mu
\]
for any $B(y, \delta)\Subset B(x,r)$. This together with metric continuity of measure $\mu$ imply that $w$ is continuous in $B(x,r)$, cf. Proposition~\ref{harm-cont}. Hence, we get that $w \in \mathcal{H}(B(x,r))$.

In order to complete the proof we need to show that $P[g] \leq w$ in $B(x,r)$. For this purpose let us consider
$u \in S_g$. We have that
\[
 \lim_{k\to \infty}\max \{w_k,u\}(x)=w(x)\quad\hbox{and}\quad\max \{w_k,u\} \in S_g.
\]
Denote by $h_k$ a harmonic modification of $\max \{w_k,u\}$ in $B(x,r)$. Then,
\[
 \sup_{z \in \partial \Omega} g(z) \geq P[g](y)\geq h_k(y) \geq \max \{w_k,u\}(y).
\]
Next, taking $y=x$, we have
\[
 w(x)=P[g](x)\geq h_k(x) \geq \max \{w_k,u\}(x),
\]
and we get that $\lim_{k \rightarrow \infty} h_k(x) =w(x)$. Thus, by harmonicity of $w$ and all $h_k$ we have
\begin{eqnarray*}
  \lim_{k \rightarrow \infty } \frac{1}{\mu (B(x,r)) } \int_{B(x,r)} h_{k} d \mu = \frac{1}{ \mu (B(x,r)) } \int_{B(x,r)} w d \mu.
\end{eqnarray*}
Subsequently, from the Lebesgue Convergence Theorem we get
\begin{align*}
\frac{1}{ \mu (B(x,r)) } \int_{B(x,r)} w d \mu &= \lim_{k \rightarrow \infty } \frac{1}{ \mu (B(x,r)) } \int_{B(x,r)} h_{k} d \mu \\
&\geq \lim_{k \rightarrow \infty } \frac{1}{ \mu (B(x,r)) } \int_{B(x,r)} \max \{ w_{k},u \} d \mu \\
&=  \frac{1}{ \mu (B(x,r)) } \int_{B(x,r)} \max \{ w,u \} d \mu.
\end{align*}
In a consequence we obtain that $w \geq u$ on $B(x,r)$ for any $u\in S_g$. Hence, $P[g] \leq w$ in $B(x,r)$. This ends the proof of harmonicity of $P[g]$.

Now, we shall show the second part of the theorem. Let $f$ be a barrier for $\Omega$ at $x_0$.
Fix $\epsilon > 0$, since $g$ is continuous on $\partial \Omega$, there exists $r>0$ such that
\begin{eqnarray*}
g(x_0) - \epsilon < g(y) < g(x_0) + \epsilon
\end{eqnarray*}
holds for all $y \in \partial \Omega \cap B(x_0,r)$.
Hence, for any $C>0$ the inequality
\begin{eqnarray} \label{nierownosc}
g(x_0) - \epsilon +Cf(y) < g(y) < g(x_0) + \epsilon - Cf(y)
\end{eqnarray}
holds in $\partial \Omega \cap B(x_0,r)$. Moreover, since $f$ is negative on $\partial \Omega \setminus B(x_0,r)$,
there exists $C$, such that inequality (\ref{nierownosc}) holds on $\partial \Omega$.
Thus, it holds that $g(x_0) - \epsilon +Cf \in S_{g}$, and by the definition of the lower Perron solution
\begin{eqnarray}\label{jeden}
g(x_0) - \epsilon +Cf \leq P[g]\quad\hbox{in }\Om.
\end{eqnarray}

Let now $h \in S_{g}$. We have that
\begin{eqnarray*}
h \leq g < g(x_0) + \epsilon - Cf \quad\hbox{on }\partial \Omega.
\end{eqnarray*}
Therefore, $h + Cf < g(x_0) + \epsilon$ on $\partial \Omega$ and by the weak maximum principle we get that
\begin{eqnarray*}
h + Cf < g(x_0) + \epsilon\quad\hbox{on }\bar{\Omega}.
\end{eqnarray*}
Hence, for all $y\in\Om$
\begin{eqnarray}\label{dwa}
P[g](y) \leq g(x_0) + \epsilon - Cf(y).
\end{eqnarray}
Finally, (\ref{jeden}) and (\ref{dwa}) result in $P[g](y)=g(x_0) + \epsilon - Cf(y)$ for all $y\in \Om$. The continuity of $f$ together with the Property (2) of barrier functions (cf. Definition~\ref{def-barr}) imply the second assertion of the theorem, as $\epsilon>0$ is chosen arbitrarily small. The proof of Theorem~\ref{glowne} is, thus, completed.
\end{proof}

The following result is one of main theorems of the paper. We establish the equivalence between the solvability of the Dirichlet problem in the underlying domain and the solvability of Dirichlet problems in all balls contained in the domain.

\begin{theorem}\label{glowne-2}
Let $(X, d, \mu)$ be a metric space with metrically continuous measure such that all balls are connected. Suppose that $\Omega \subset X$ is a domain and $g:\partial \Om \to \R$ is a continuous function. Moreover, let us assume that the harmonic Dirichlet problem has a solution on an arbitrary ball $B\subset \Om$ with a boundary data $g|_{\partial B}$.

Then, the Dirichlet problem has a solution in $\Om$ if and only if at every $x \in \partial \Omega$ there exists a barrier function, and for every ball in $\Omega$ there exists a solution of the Dirichlet problem with $g$.
\end{theorem}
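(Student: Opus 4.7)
The plan is to split the equivalence into its two implications and invoke Theorem~\ref{glowne} for the ``if'' direction, while obtaining ball solvability by restriction and producing barriers by solving an auxiliary Dirichlet problem in the ``only if'' direction.

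For the $(\Leftarrow)$ direction I would argue directly from Theorem~\ref{glowne}: the joint hypothesis that the Dirichlet problem is solvable on every ball $B\Subset\Om$ and that a barrier exists at every boundary point $x_0\in\partial\Om$ is precisely the input of Theorem~\ref{glowne}. That theorem then yields that the lower Perron solution $P_\Om[g]$ belongs to $\harm(\Om,\mu)$ and satisfies $\lim_{\Om\ni y\to x_0}P_\Om[g](y)=g(x_0)$ for every $x_0\in\partial\Om$. Extending $P_\Om[g]$ continuously to $\overline{\Om}$ by $g$ on $\partial\Om$ produces the desired solution of the Dirichlet problem.

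For the $(\Rightarrow)$ direction, I would read the hypothesis as asserting that for every continuous datum on $\partial\Om$ the Dirichlet problem in $\Om$ admits a continuous harmonic solution. The ball solvability with data $g|_{\partial B}$ is then trivial: if $u\in\harm(\Om,\mu)\cap C(\overline{\Om})$ realizes the solution in $\Om$, then the mean value property of $u$ restricts to every ball $B\Subset\Om$, so $u|_B\in\harm(B,\mu)\cap C(\overline{B})$ and its boundary values on $\partial B$ are the required data. To build a barrier at a fixed $x_0\in\partial\Om$, I would take the auxiliary boundary datum $\phi_{x_0}(y):=-d(y,x_0)$, which is continuous on $\partial\Om$, vanishes at $x_0$, and is strictly negative on $\partial\Om\setminus\{x_0\}$. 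Let $v_{x_0}\in\harm(\Om,\mu)\cap C(\overline{\Om})$ solve the Dirichlet problem in $\Om$ with this datum. Since every harmonic function is locally subharmonic in the sense of Definition~\ref{def-loc-subh}, and since $v_{x_0}|_{\partial\Om}=\phi_{x_0}$, the three conditions of Definition~\ref{def-barr} are immediately satisfied and $v_{x_0}$ is a barrier at $x_0$.

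The main obstacle is essentially interpretive rather than technical: the $(\Rightarrow)$ direction only goes through if one reads ``the Dirichlet problem has a solution in $\Om$'' as solvability for every continuous boundary datum, since we need to solve an auxiliary problem with datum $-d(\cdot,x_0)$ to construct each barrier. Once this interpretation is granted, both directions are short and rest entirely on Theorem~\ref{glowne} together with the elementary observation that a harmonic function automatically satisfies Definition~\ref{def-loc-subh}; no further analytic estimates beyond those already established in Section~\ref{sect41} are needed.
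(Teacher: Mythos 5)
Your proof is correct and takes essentially the same route as the paper's: the $(\Leftarrow)$ direction is a direct application of Theorem~\ref{glowne}, and the $(\Rightarrow)$ direction constructs a barrier at each $x_0\in\partial\Om$ by solving an auxiliary Dirichlet problem with a continuous datum that vanishes at $x_0$ and is negative elsewhere on $\partial\Om$ (you take $-d(\cdot,x_0)$ explicitly, while the paper produces such a datum via the Urysohn lemma --- an immaterial difference). The interpretive caveat you raise, that ``the Dirichlet problem has a solution in $\Om$'' must be read as solvability for all continuous boundary data, is tacitly made in the paper's own proof as well.
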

\begin{proof}
The necessity part of the proof of Theorem~\ref{glowne-2} immediately follows from Theorem \ref{glowne}.

In order to show the sufficiency part, let us assume that the Dirichlet problem has a solution in $\Om$ for a boundary data $g$ as in assumptions of the theorem. We fix any $x \in \partial \Omega$ and consider $y \in \partial \Omega$, a point distinct from $x$. By the Urysohn lemma, there exists a continuous function $h: \partial \Omega \rightarrow [-1,0]$ such that $h(x)=0, h(y)=-1$ and $h|_{\partial \Omega \setminus \{x, y\}}\in (-1,0)$. Let $G$ be a unique solution to the Dirichlet problem in $\Om$ with boundary condition $g:=h$. Then $G$ is a barrier in $\Om$.
\end{proof}

\section{The Liouville theorem}

The Liouville theorem is a classical result in the theory of harmonic functions in $\R^n$. The purpose of this section is to establish similar results for strongly and weakly harmonic functions on metric measure spaces. It turns out that already on $\R$ we may choose such a measure, so that the Liouville theorem fails, cf. Example~\ref{ex4}. However, below we establish a fairly general condition on a measure resulting in the Liouville theorem, see \eqref{theoremLP-assp} and \eqref{theoremLP-assw} in Theorem~\ref{theoremLP}. Moreover, we discuss some sufficient conditions on a measure and a metric space to ensure that Theorem~\ref{theoremLP} holds, see Remark~\ref{rem-suff-liouv} and Corollary~\ref{cor-Liouv}. For strongly harmonic functions a variant of the Liouville theorem follows from the Harnack inequality on balls, see Theorem~\ref{Liouv-Harn}.

Let us begin with the following simple observation.
\begin{prop}\label{Liouv-prop1}
Suppose that $f \in \mathcal{H}(\Om, \mu )$ and let $f > 0$ in $\Om$,
then $g \in \mathcal{H}(\Om, f \mu)$ if and only if $ gf \in \mathcal{H}(\Om, \mu)$.

Moreover, the assertion remains true for $f, g\in \wharm(\Om, \mu)$ and $f>0$ provided that $f$ and $g$ have the same sets of admissible radii $r^x_i$ for $i=1,2,\ldots$ at every point $x\in\Om$.
\end{prop}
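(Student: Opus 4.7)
The proof is essentially a direct computation, and the positivity of $f$ together with its harmonicity makes the two mean value properties equivalent. My plan is as follows.

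First, I would unpack what it means for $f\mu$ to act as a measure: by definition $d(f\mu) := f\, d\mu$, so for every ball $B = B(x,r) \Subset \Omega$ the harmonicity of $f$ with respect to $\mu$ gives
\[
 (f\mu)(B) = \int_{B} f(z)\,d\mu(z) = f(x)\,\mu(B),
\]
which is strictly positive and finite since $f > 0$ and $\mu(B) \in (0,\infty)$. This single identity is the engine of the whole equivalence.

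Next, I would verify that $gf \in L^1_{\mathrm{loc}}(\Omega,\mu)$ if and only if $g \in L^1_{\mathrm{loc}}(\Omega,f\mu)$; this is immediate from $\int_K |g|f\,d\mu = \int_K |g|\,d(f\mu)$ for compact $K \subset \Omega$. With the integrability condition settled, the mean value properties compare as follows. If $gf \in \mathcal{H}(\Omega,\mu)$, then for every $B = B(x,r)\Subset\Omega$,
\[
 g(x)f(x) = \frac{1}{\mu(B)}\int_B g(z)f(z)\,d\mu(z),
\]
and dividing both sides by $f(x) > 0$ and invoking the identity above yields
\[
 g(x) = \frac{1}{f(x)\mu(B)}\int_B g(z)\,d(f\mu)(z) = \frac{1}{(f\mu)(B)}\int_B g\,d(f\mu),
\]
so $g \in \mathcal{H}(\Omega,f\mu)$. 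The converse is the same calculation run in reverse: multiply the mean value property for $g$ with respect to $f\mu$ by $f(x)$ and substitute $(f\mu)(B) = f(x)\mu(B)$ to recover the mean value property for $gf$ with respect to $\mu$.

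For the weakly harmonic part, the reasoning is identical except one must be careful to use only those radii $r^x_i$ for which both $f$ and $g$ satisfy their respective mean value properties. The hypothesis that $f$ and $g$ share the same admissible set $\{r^x_i\}_{i\geq 1}$ at every $x \in \Omega$ is exactly what is needed: for each such radius the identity $(f\mu)(B(x,r^x_i)) = f(x)\mu(B(x,r^x_i))$ follows from the weak harmonicity of $f$ at $r^x_i$, and the equivalence above then passes through for $g$ versus $gf$ at the very same radii. There is no real obstacle here; the only thing to watch is that one should not divide by $f(x)$ without recording positivity, and one must not inadvertently claim harmonicity of $gf$ (resp.\ $g$) at radii that are admissible for only one of the two functions, which is precisely what the shared-admissible-radii assumption precludes.
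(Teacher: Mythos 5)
Your proof is correct and follows essentially the same route as the paper's: both arguments hinge on the single identity $\int_B f\,d\mu = f(x)\mu(B)$ (harmonicity of $f$) to convert the mean value property with respect to $f\mu$ into the one with respect to $\mu$ and back, with the weakly harmonic case handled by restricting to the shared admissible radii. Your explicit remarks on local integrability and on the positivity needed to divide by $f(x)$ are minor additions the paper leaves implicit, but they do not change the argument.
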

\begin{proof}
Let $g \in \mathcal{H}(\Om, f\mu )$. Define $h=fg $ and by a straightforward computation we get
\begin{eqnarray*}
\frac{\int_{B(x,r)} h d \mu}{\mu ( B(x,r) )} =\frac{\int_{B(x,r)} gf d \mu}{\mu ( B(x,r) )} = \frac{g(x) {\int_{B(x,r)} f d \mu }}{ \mu ( B(x,r) )} = f(x)g(x) = h(x).
\end{eqnarray*}
Hence, we obtain that $h \in \mathcal{H}(\Om, \mu)$.

In order to show the opposite implication, let us define $g := \frac{h}{f}$, where $h \in \mathcal{H}(\Om, \mu )$. Thus,
\begin{eqnarray*}
\frac{\int_{B(x,r)} \frac{h}{f}f d \mu}{\int_{B(x,r)} f d \mu } = \frac{\int_{B(x,r)} h d \mu}{\int_{B(x,r)} f d \mu } = \frac{h(x)}{f(x)}.
\end{eqnarray*}

Let now $f,g\in \wharm(\Om, \mu)$ and $f>0$. Then, the above reasoning holds at every ball $B=B(x, r^x_i)$ for all $i$, since sets of admissible radii are the same for both functions.
\end{proof}

Before proving Liouville-type results, let us give an example illustrating that, in general, the Liouville property need not hold.
\begin{example}\label{ex4}
 There exists a measure $\mu$ on $(\mathbb{R}, |\cdot |)$ and $f \in \mathcal{H}(\mathbb{R}, \mu)$ which is bounded and nonconstant.

 Let $f(x)= 2 e^{x}\cosh x=1+e^{2x}$. Then $f \in \mathcal{H}(\mathbb{R}, e^{-x} dx )$. Indeed, for every $x \in \mathbb{R}$ and $r>0$ we have
\begin{equation*}
\int_{B(x,r)} f(t)e^{-t} dt\,=\, \int_{x-r}^{x+r} (1+e^{2t})e^{-t} dt.
\end{equation*}
Thus,
\begin{equation}\label{ex-Liouv-eq1}
\int_{x-r}^{x+r} e^{-t} dt = e^{-x}(e^r - e^{-r})
\end{equation}
and
\begin{equation*}
\int_{x-r}^{x+r} e^{2t} e^{-t} dt = e^{x}(e^r - e^{-r}).
\end{equation*}
Hence, we obtain that
\begin{equation*}
 \frac{1}{\muballxr} \int_{B(x,r)} f(t)\mu(t)\,=\,1+\frac{\int_{x-r}^{x+r} e^{2t} e^{-t} dt }{\int_{x-r}^{x+r} e^{-t} dt } = 1+e^{2x}=f(x).
\end{equation*}

However, by virtue of Proposition~\ref{Liouv-prop1} applied with $f(x)= 2 e^{x}\cosh x$, measure $\mu=e^{-x}$ we get that a entire nonconstant bounded harmonic function $g:=\frac{1}{f}=\frac{1}{1 + e^{2x}} \in \mathcal{H}(\mathbb{R}, 2 \cosh x \, dx)$, since $gf=1\in \harm(\Om ,\mu)$ by \eqref{ex-Liouv-eq1}.
\end{example}

We turn now to the question of the structure and dimension of the space of harmonic functions on the whole space. Similar studies in the setting of manifolds have been studied by several authors, e.g. Colding--Minicozzi~\cite{cm}.

Definition~\ref{defn-harm} implies that the space of harmonic functions $\harm(\Om, \mu)$ is a linear space. The following result holds.

\begin{theorem} \label{tw2}
  Denote by $\dim \harm (\Om, \mu)$ a dimension of $\harm (\Om, \mu)$ as a linear space. Then $\dim \mathcal{H} (\mathbb{R}, d,  \mu) \leq 2$.
\end{theorem}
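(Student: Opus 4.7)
The plan is to prove injectivity of the linear evaluation map
\[
\mathrm{ev}: \mathcal{H}(\mathbb{R}, d, \mu) \to \mathbb{R}^2, \qquad \mathrm{ev}(f) = (f(0), f(1)),
\]
from which the conclusion $\dim \mathcal{H}(\mathbb{R}, d, \mu) \leq 2$ is immediate.

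To this end I would fix $f \in \mathcal{H}(\mathbb{R}, \mu)$ with $f(0) = f(1) = 0$ and introduce the auxiliary functions
\[
\Phi(x) := \int_0^x f \, d\mu, \qquad \Psi(x) := \int_0^x d\mu,
\]
with the standard sign convention for $x<0$. The mean value property is then equivalent to
\[
\Phi(x+r) - \Phi(x-r) = f(x)\bigl(\Psi(x+r) - \Psi(x-r)\bigr) \qquad (\ast)
\]
for every $x \in \mathbb{R}$ and $r>0$. Applying $(\ast)$ at $x=0$ and at $x=1$ yields $\Phi(r) = \Phi(-r)$ and $\Phi(1+r) = \Phi(1-r)$, i.e., $\Phi$ is symmetric about both $0$ and $1$. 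Composing these two reflections gives the translation $y \mapsto y+2$, so $\Phi$ is $2$-periodic. Since $f \in L^1_{\mathrm{loc}}(\mathbb{R}, \mu)$, $\Phi$ is locally bounded, and by periodicity globally bounded.

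The decisive step is to fix any $x \in \mathbb{R}$ and regard both sides of $(\ast)$ as functions of $r>0$. Using periodicity of $\Phi$, the left-hand side $r \mapsto \Phi(x+r) - \Phi(x-r)$ is $2$-periodic in $r$. On the right-hand side, $\Psi(x+r) - \Psi(x-r) = \mu((x-r, x+r))$ is \emph{strictly} increasing in $r$ on $(0,\infty)$, since the standing assumption $0 < \mu(B) < \infty$ for every ball forces every non-degenerate open interval to have positive $\mu$-measure. Consequently, if $f(x) \neq 0$, the right-hand side is strictly monotonic in $r$, which is incompatible with the $2$-periodicity of the left-hand side. Hence $f(x)=0$; since $x$ was arbitrary, $f \equiv 0$, proving injectivity of $\mathrm{ev}$.

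The key obstacle is recognizing, and exploiting, the clash between the $2$-periodicity of the left-hand side of $(\ast)$ (inherited from the vanishing of $f$ at two distinct points via the two reflection symmetries of $\Phi$) and the strict monotonicity of $r \mapsto \mu((x-r, x+r))$ on the right-hand side (coming from the positivity of $\mu$ on open balls). Once both facts are in place, the argument is a one-line comparison, and the bound $\dim \mathcal{H} \le 2$ drops out.
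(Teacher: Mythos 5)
Your argument is correct, and it takes a genuinely different route from the paper's. The paper first shows, using the continuity of harmonic functions (Proposition~\ref{harm-cont}) together with the strong and weak maximum principles of Section~\ref{sect41}, that every $f\in\mathcal{H}(\mathbb{R},d,\mu)$ is either constant or strictly monotone, and then matches any two non-constant harmonic functions by the affine combination determined by their values at $\pm1$; in doing so it quietly inherits the hypotheses under which those auxiliary results are stated (metric continuity of $\mu$ for Proposition~\ref{harm-cont}, and the doubling assumption appearing in Proposition~\ref{strong-max-pr}). Your route is more elementary and more self-contained: you never invoke continuity of $f$, Harnack, or a maximum principle, only the one-dimensional identity $(\ast)$, the two reflection symmetries of $\Phi$ forced by $f(0)=f(1)=0$ (whose composition gives $2$-periodicity), and the strict monotonicity of $r\mapsto\mu\bigl(B(x,r)\bigr)$ coming from the standing assumption that nonempty open balls have positive measure. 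The periodicity-versus-monotonicity clash then replaces the paper's ``constant or strictly monotone'' dichotomy in one line, and the injectivity of the evaluation map at $\{0,1\}$ immediately gives $\dim\le 2$. The one point you should make explicit is that the identity $\Phi(x+r)-\Phi(x-r)=\int_{B(x,r)}f\,d\mu$, and the additivity used when composing the two reflections, require $\mu(\{p\})=0$ for every point $p$ (otherwise the open ball $B(x,r)=(x-r,x+r)$ and the increment of $\Phi$ differ by atomic contributions at the endpoints). On $\mathbb{R}$ this is precisely the continuity of $\mu$ with respect to $d$ that the paper's own proof also assumes implicitly through Proposition~\ref{harm-cont}, so it is a shared caveat rather than a gap in your argument, but it deserves a sentence.
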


Note that the similar result for weakly harmonic functions fails since such functions do not have a natural structure of a linear space. In order to ensure such structure one would have to assume, for instance, that all functions in $\wharm(\Om, \mu)$ have the same sets of admissible radii at every point of $\Om$.

Before proving Theorem~\ref{tw2} we provide an example showing that in fact $\dim \mathcal{H}$ can be smaller then $2$.

\begin{example}\label{ex-Liouv2}
It holds that $\dim \mathcal{H} (\mathbb{R}, |\cdot|, |x| dx)=1$.

Indeed, let $f \in \mathcal{H}( \mathbb{R}, |\cdot|, |x| dx)$ and denote $g:=f|_{ (0,+\infty)}$. Then  $g\in \mathcal{H}((0, + \infty), |\cdot|, xdx)$. Since $ x \in \mathcal{H}( (0, + \infty), |\cdot|, dx)$, then Proposition~\ref{Liouv-prop1} implies that  $xg\in \mathcal{H}( (0, 1), |\cdot|, dx)$ and, therefore, $g(x) = \frac{A}{x} + B$ for some positive constants $A, B$. The continuity of $f$ results in $f \equiv B$.
\end{example}

\begin{proof}[Proof of Theorem~\ref{tw2}]
We first show the following claim: If $f \in \mathcal{H} (\mathbb{R} , d, \mu) $, then $f$ is constant or strictly monotone.

Let $D= \{(x,y): x<y \}$ and consider a function $g: D \rightarrow \mathbb{R}$ defined as follows
\begin{equation*}
g(x,y):= f(x) - f(y).
\end{equation*}
Since $f$ is continuous in $D$ by Proposition~\ref{harm-cont}, then so is $g$ in $D\times D$. Suppose that $f$ is not strictly monotone. Then $g(x,y)=0$ for some $(x,y)\in D$, then $f(x) = f(y)$ and by the weak maximum principle in Corollary~\ref{weak-max-pr} we get that $f$ is constant on the interval $[x,y]$.

Next, we take any $b>y$. If $f$ is not strictly monotone on $[y,b]$, then we split this interval into the intervals where $f$ is monotone and apply the following reasoning on such intervals. Therefore, let us suppose that $f$ is monotone increasing on $[y,b]$, then by the the strong maximum principle in Proposition~\ref{strong-max-pr} applied to $[x,b]$, we get that $f$ is constant on $[x,b]$. We obtain the same conclusion if $f$ is monotone decreasing, since then we use the strong minimum principle (an immediate consequence of Proposition~\ref{strong-max-pr}). The analogous reasoning gives us that $f$ must be constant on any interval $[a,y]$ for $a<x$. From this, we have that $f$ is constant on any interval $[a,b]$ containing the set $[x,y]$ and the claim is proven.

Now, we are in position to prove the assertion of the theorem. Let $f,g \in \mathcal{H} (\mathbb{R}, d, \mu)$ be such that $f, g$ are non-constant. Then, by the claim $f$ and $g$ are strictly monotone. Hence, there exists $A \in \mathbb{R}$ such that
\begin{eqnarray*}
f(1) - f(-1) = A( g(1) - g(-1)).
\end{eqnarray*}
Thus,
\begin{eqnarray*}
f(-1) = A g(-1) + B, \\
f(1) = A g(1) + B
\end{eqnarray*}
for some $B \in \mathbb{R}$. Hence, by the maximum principle we get $f(x) - Ag(x) - B = 0$ for $x \in [-1,1]$. This implies that $f(x) - Ag(x) - B = 0$ for $ x \in \mathbb{R}$ and the proof of theorem follows.
\end{proof}

\begin{theorem} \label{theoremLP}
Suppose that for every $x, y \in X$ the following condition holds
\begin{eqnarray}\label{theoremLP-assp}
\liminf_{ r \rightarrow \infty } \frac{ \mu ( B(x,r) \bigtriangleup B(y,r)) }{  \mu (B(x,r) )} = 0.
\end{eqnarray}
Then, every bounded harmonic function in $\harm(X, \mu)$ is constant.

Moreover, the assertion holds true for a bounded $f \in \wharm(X, \mu)$ provided that at every $x, y\in X$
there exist sequences $(r^x_n), (r^y_n)$ of admissible radii such that the following holds
\begin{equation} \label{theoremLP-assw}
 \lim_{ n \rightarrow \infty } \frac{ \mu ( B(x,r^x_n) \bigtriangleup
B(y,r^y_n)) }{  \mu (B(x,r^x_n) )} = 0.
\end{equation}

\end{theorem}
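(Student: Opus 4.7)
The plan is to transport the continuity estimate \eqref{harm-cont-est} of Proposition~\ref{harm-cont} from the regime ``$y\to x$ with $r$ fixed'' to the regime ``$r\to\infty$ with $x,y$ fixed''. The central observation is that the ratio $\mu(B(x,r)\bigtriangleup B(y,r))/\mu(B(x,r))$ controls $|f(x)-f(y)|$ up to a multiplicative factor depending only on $\|f\|_\infty$, and the hypotheses \eqref{theoremLP-assp} and \eqref{theoremLP-assw} say precisely that this ratio tends to $0$ along suitable sequences of radii.

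Concretely, for $f\in\harm(X,\mu)$ bounded with $M:=\sup_{X}|f|$, I would fix $x,y\in X$ and an arbitrary $r>0$ and apply the mean value property at both points with common radius $r$. Mimicking the algebraic manipulation that led to \eqref{harm-cont-est} yields
\[
|f(x)-f(y)|\leq\frac{1}{\mu(B(x,r))}\int_{B(x,r)\bigtriangleup B(y,r)}|f|\,d\mu+\frac{|\mu(B(x,r))-\mu(B(y,r))|}{\mu(B(x,r))\,\mu(B(y,r))}\,\|f\|_{L^{1}(B(y,r))}.
\]
Bounding $|f|\leq M$ pointwise in the first summand, $\|f\|_{L^{1}(B(y,r))}\leq M\mu(B(y,r))$ in the second, and noting the elementary inequality
\[
|\mu(B(x,r))-\mu(B(y,r))|\leq\mu(B(x,r)\bigtriangleup B(y,r)),
\]
which is immediate from $\mu(A)-\mu(B)=\mu(A\setminus B)-\mu(B\setminus A)$, one collapses both summands to a common form and obtains
\[
|f(x)-f(y)|\leq 2M\,\frac{\mu(B(x,r)\bigtriangleup B(y,r))}{\mu(B(x,r))}.
\]
Taking $\liminf_{r\to\infty}$ on both sides and invoking \eqref{theoremLP-assp} forces $f(x)=f(y)$, and since $x,y$ were arbitrary, $f$ is constant.

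For the weakly harmonic variant the strategy is identical in spirit, but now one must use two potentially different radii: at $x$ the admissible radius $r^x_n$ and at $y$ the admissible radius $r^y_n$ supplied by \eqref{theoremLP-assw}. The decomposition
\[
f(x)-f(y)=\frac{1}{\mu(B(x,r^x_n))}\int\bigl[\chi_{B(x,r^x_n)}-\chi_{B(y,r^y_n)}\bigr]f\,d\mu+\left(\frac{1}{\mu(B(x,r^x_n))}-\frac{1}{\mu(B(y,r^y_n))}\right)\int_{B(y,r^y_n)} f\,d\mu
\]
is legitimate because $r^x_n$ is admissible for $f$ at $x$ and $r^y_n$ at $y$, and the same boundedness and symmetric-difference estimates lead to
\[
|f(x)-f(y)|\leq 2M\,\frac{\mu(B(x,r^x_n)\bigtriangleup B(y,r^y_n))}{\mu(B(x,r^x_n))}.
\]
Letting $n\to\infty$ and applying \eqref{theoremLP-assw} finishes the proof.

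I do not anticipate a serious obstacle: the only step that is not purely bookkeeping is the symmetric-difference bound on $|\mu(B(x,\cdot))-\mu(B(y,\cdot))|$, and that is a one-line consequence of additivity of $\mu$. The mild care needed in the weak case lies in making sure that in each summand of the decomposition the integrand is integrated over a ball whose radius is admissible for $f$ at the corresponding point, so that the mean value property can indeed be invoked; once the decomposition is written this way, the estimate proceeds verbatim as in the strong case.
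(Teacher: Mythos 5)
Your proposal is correct and follows essentially the same route as the paper: the authors likewise start from the decomposition behind \eqref{harm-cont-est} with radii $r_1$ at $x$ and $r_2$ at $y$, bound both summands by $2M\,\mu(B(x,r_1)\bigtriangleup B(y,r_2))/\mu(B(x,r_1))$, and then take $r_1=r_2=r\to\infty$ in the strong case and $r_1=r^x_n$, $r_2=r^y_n$ in the weak case. Your explicit justification of $|\mu(A)-\mu(B)|\le\mu(A\bigtriangleup B)$ is a minor addition the paper leaves implicit.
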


Before proving the theorem we present two observations regarding sufficient conditions for functions and for a space and a measure for \eqref{theoremLP-assw} and \eqref{theoremLP-assp} to hold, respectively.

\begin{remark}
Suppose that at every point $x\in X$: (1) $f$ has the same sets of admissible radii $r^x_n$ for $n=1,2,\ldots$,\,and
 (2) $r^x_M=\sup_{n\in \N}r^x_n=\infty$. Then in assumption \eqref{theoremLP-assw} one can consider, for instance, sequences $r^x_n=r^y_n$ for $n=1,2,\ldots$.
\end{remark}

\begin{remark}\label{rem-suff-liouv}
 Let us provide an example of a measure which ensures that \eqref{theoremLP-assp} holds. Suppose that $(X, d, \mu)$ is a length metric measure space with a doubling measure $\mu$. For such spaces Corollary 2.2 in Buckley~\cite{buc} stays that $X$ satisfies a $\delta$-annular decay property for some $\delta\in(0,1]$, cf. the discussion following Definition~\ref{defn-an-dec}. Then $\mu$ satisfies \eqref{theoremLP-assp}. Indeed, as in Remark~\ref{remark-ann-dec} we have that for $x,y\in X$ with $d(x,y)<r$ it holds
 \begin{equation*}
\frac{\mu(B(x,r) \bigtriangleup B(y,r))}{\mu(B(x,r))} \leq A \left(\frac{2d(x,y)}{r+d(x,y)}\right)^\delta \frac{\mu(B(x,r+d(x,y))}{\mu(B(x,r))}\leq A C_{\mu}\left(\frac{2d(x,y)}{r+d(x,y)}\right)^\delta,
 \end{equation*}
 since $r+d(x,y)<2r$ and the doubling condition can be applied. By letting $r\to\infty$ we arrive at \eqref{theoremLP-assp}.
\end{remark}

\begin{proof}[Proof of Theorem~\ref{theoremLP}]
Let $f\in \mathcal{H}(X)$ be bounded and set $M:= \|f\|_{L^{\infty}(X)}$. We follow the steps of the reasoning at
\eqref{harm-cont-est}, see the proof of Proposition~\ref{harm-cont} and cf. Lemma 4.3 in \cite{GG}, and obtain
\begin{align*}
|f(x)-f(y)| &\leq \frac{1}{\mu(B(x,r_1))} \int_{B(x,r_1) \vartriangle B(y,r_2)}
|f(z)| d\mu(z)
  + \frac{|\mu(B(x,r_1))-\mu(B(y, r_2))|}{\mu(B(x.r_1)) \mu(B(y,
r_2))}\left|\int_{B(y,r_2)} f(z)d\mu(z)\right| \\
 &\leq M\left(\frac{\mu(B(x,r_1) \vartriangle B(y,r_2))}{\mu(B(x,r_1))}+
\frac{\mu(B(x,r_1) \vartriangle B(y,r_1))}{\mu(B(x,r_1)) \mu(B(y, r_2))}\mu(B(y,
r_2))\right) \\
&\leq 2M \frac{ \mu( B(x,r_1) \bigtriangleup B(y,r_2) )}{ \mu (B(x,r_1))}.
\end{align*}

Now let $r_1 = r_2 = r$. We take $\liminf$ for $r\rightarrow \infty$ and thus, by assumption
\eqref{theoremLP-assp}, we get that $f(x)=f(y)$ for every $x, y \in X$. From this the proof for strongly harmonic functions follows.

Let now $f\in \wharm(X, \mu)$. Then we set $r_1=r^x_n$ and $r_2 = r^y_n$ for $n=1,2,\ldots$ and appeal to \eqref{theoremLP-assw} in order to complete the proof of the theorem for weakly harmonic functions.

\end{proof}

\begin{corol}\label{cor-Liouv}
If $ \mu(X) < \infty$, then every bounded $f \in \mathcal{H}(X)$ is constant. Moreover, if $f\in \wharm (X)$ is bounded and $r^x_M = \infty$ for all $x \in X$, then $f$ is constant.
\end{corol}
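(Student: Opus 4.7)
My plan is to derive both assertions directly from Theorem~\ref{theoremLP} by verifying its measure-theoretic hypotheses \eqref{theoremLP-assp} and \eqref{theoremLP-assw} under the single assumption $\mu(X)<\infty$. The work reduces to elementary facts about monotone sequences of ball measures in a finite measure space.

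For $f\in\harm(X)$ I will check \eqref{theoremLP-assp}. The key inclusion, already exploited in Remark~\ref{remark-ann-dec}, is that when $r>d(x,y)$,
\[
 B(x,r)\bigtriangleup B(y,r)\subset B(x,r+d(x,y))\setminus B(x,r-d(x,y)).
\]
Since $r\mapsto\mu(B(x,r))$ is monotone increasing and bounded above by $\mu(X)<\infty$, it admits a finite limit as $r\to\infty$, so the measure of the annulus on the right tends to zero. The denominator $\mu(B(x,r))$ stays bounded below by the positive number $\mu(B(x,1))$, so the ratio in \eqref{theoremLP-assp} converges to $0$ and Theorem~\ref{theoremLP} applies.

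For $f\in\wharm(X)$ I will exploit $r^x_M=\infty$ at every $x\in X$ to verify \eqref{theoremLP-assw}. Given $x,y\in X$, pick admissible sequences $r^x_n,r^y_n\to\infty$. Since any $z\in X$ satisfies $z\in B(x,r)$ once $r>d(x,z)$, one has $X=\bigcup_n B(x,r^x_n)=\bigcup_n B(y,r^y_n)$, and continuity of $\mu$ from below yields $\mu(B(x,r^x_n))\to\mu(X)$ and $\mu(B(y,r^y_n))\to\mu(X)$. Inclusion--exclusion then gives
\[
 \mu(B(x,r^x_n)\cup B(y,r^y_n))+\mu(B(x,r^x_n)\cap B(y,r^y_n))=\mu(B(x,r^x_n))+\mu(B(y,r^y_n))\to 2\mu(X),
\]
and since both terms on the left are bounded above by $\mu(X)$, each must converge to $\mu(X)$. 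Subtracting forces $\mu(B(x,r^x_n)\bigtriangleup B(y,r^y_n))\to 0$, and dividing by $\mu(B(x,r^x_n))\to\mu(X)>0$ gives \eqref{theoremLP-assw}. I do not anticipate any genuine obstacle; the corollary is essentially a bookkeeping exercise once the reduction to Theorem~\ref{theoremLP} is identified, with the only minor care being the standing assumption $\mu(B)>0$ for balls, which ensures $\mu(X)>0$ and hence a nontrivial denominator.
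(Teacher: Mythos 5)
Your proposal is correct, and it follows the paper's overall strategy of reducing both assertions to Theorem~\ref{theoremLP}; the difference lies in how the measure-theoretic hypotheses are verified. For the strongly harmonic case the two arguments are essentially equivalent: the paper decomposes $X$ into pairwise disjoint annuli $B(x,r_{n+1})\setminus B(x,r_n)$ with $r_n=(2n-1)d(x,y)$ and uses $\sum_n\mu\bigl(B(x,r_{n+1})\setminus B(x,r_n)\bigr)\leq\mu(X)<\infty$ to force the annulus measures to zero, whereas you observe that the bounded monotone function $r\mapsto\mu(B(x,r))$ has a finite limit, so $\mu(B(x,r+d(x,y)))-\mu(B(x,r-d(x,y)))\to 0$; both rest on the same inclusion $B(x,r)\bigtriangleup B(y,r)\subset B(x,r+d(x,y))\setminus B(x,r-d(x,y))$. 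For the weakly harmonic case your route is genuinely different and noticeably cleaner: the paper must sandwich $B(x,r^x_n)\bigtriangleup B(y,r^y_n)$ inside an annulus $B(x,r_n)\setminus B(x,s_n)$ with $r_n=r^x_n+r^y_n+d(x,y)$ and $s_n=\min\{r^y_n-d(x,y),r^x_n\}$, and then extract subsequences along which these annuli are pairwise disjoint in order to conclude their measures vanish; your inclusion--exclusion argument, combined with $\mu(B(x,r^x_n))\to\mu(X)$ and $\mu(B(y,r^y_n))\to\mu(X)$, obtains $\mu\bigl(B(x,r^x_n)\bigtriangleup B(y,r^y_n)\bigr)=\mu(\cup)-\mu(\cap)\to 0$ directly, with no geometric containment and no subsequence extraction. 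What the paper's approach buys is a purely geometric estimate that does not use $\mu(B(x,r^x_n))\to\mu(X)$; what yours buys is brevity and the avoidance of the disjointification step. Your closing remark that $\mu(X)>0$ follows from the standing assumption $\mu(B)>0$ for all balls correctly secures the nontrivial denominator in both parts.
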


\begin{proof}
Observe that for $x,y \in X$ and $r > d(x,y)$ we have
\begin{equation*}
B(x,r - d(x,y)) \subset B(x,r) \cap B(y,r) \subset B(x,r) \cup B(y,r) \subset B(x,r + d(x,y)).
\end{equation*}
Hence,
\begin{equation}\label{cor-Liouv-e1}
B(x,r) \bigtriangleup B(y,r) \subset B(x,r+d(x,y))\setminus B(x,r-d(x,y)).
\end{equation}
Next, for $n\geq 2$ we define $r_n = (2n - 1) d(x,y)$ and $\bar{r}_n = 2n d(x,y)$. Since
\begin{eqnarray*}
X = B(x,r_2) \cup \bigcup_{n =2}^\infty B(x, r_{n+1})\setminus B(x, r_n)
\end{eqnarray*}
and $ \mu(X) < \infty $, we have
\begin{eqnarray*}
\lim_{n \rightarrow \infty} \mu \left( B(x, r_{n+1})\setminus B(x, r_n)\right)= 0.
\end{eqnarray*}
In view of the above relations, we conclude
\begin{equation*}
\frac{ \mu( B(x,\bar{r}_n) \bigtriangleup B(y,\bar{r}_n) )}{ \mu (B(x,\bar{r}_n))} \leq \frac{\mu \left( B(x, r_{n+1})\setminus B(x, r_n)\right)}{ \mu (B(x,\bar{r}_2))} \rightarrow 0, \quad \hbox{ for } n\to\infty.
\end{equation*}
Therefore, assumption \eqref{theoremLP-assp} of Theorem~\ref{theoremLP} is satisfied and hence every bounded harmonic function in $\harm(X, \mu)$ is constant. The proof of the theorem for strongly harmonic functions is completed.

Now let $f \in \wharm(X)$ with $r^x_M=\infty$ for all $x \in X$. Therefore, at every $x\in X$ we may choose monotone sequences of admissible radii $(r^x_n)$ and $(r^y_n)$, such that $\displaystyle \lim_{n \rightarrow \infty} r^x_n =  \lim_{n \rightarrow \infty} r^y_n = \infty$.   Then,
\begin{equation*}
B(x, \min \{r^y_n - d(x,y), r^x_n \} ) \subset B(x,r^x_n) \cap B(y,r^y_n) \subset B(x,r^x_n) \cup B(y,r^y_n)
\subset B(x,r^x_n + r^y_n + d(x,y)).
\end{equation*}
Define two sequences
\[
r_n = r^x_n + r^y_n + d(x,y), \qquad s_n =  \min \{r^y_n - d(x,y), r^x_n \}\quad \hbox{for } n=1,2,\ldots.
\]
Hence,
\begin{equation*}
B(x,r^x_n) \bigtriangleup B(y,r^y_n) \subset  B(x,r_n) \setminus B(x, s_n )\quad  \hbox{ for all }n.
\end{equation*}
Let us construct the following subsequences of $(r_n)$ and $(s_{n})$:
\begin{itemize}
\item[(1)] $r_1':=r_{n_1}$, $s_1':=s_{n_1}$, for some $n_1\geq 1$,
\item[(2)] for $l=2,3,\ldots$ we set $s_l':=s_{n_{k+1}}$, such that $s_{n_{k+1}}> r_{n_k}$ and $r_l':=r_{n_{k+1}}$.
\end{itemize}
Therefore, for $i \neq j$ we have that
\[  \left( B(x,r_{i}') \setminus B(x, s_i' ) \right) \cap  \left( B(x,r_j') \setminus B(x, s_j' ) \right) = \emptyset.
\]
Hence, by additionally appealing to the finiteness of the measure $\mu$ of $X$, we get
\begin{equation*}
\frac{ \mu( B(x,r^x_{n_k}) \bigtriangleup B(y,r^y_{n_k}) )}{ \mu
(B(x,r^x_{n_k}))} \leq \frac{\mu \left( B(x, r_n')\setminus B(x,
s_n')\right)}{ \mu (B(x,r^x_1))} \rightarrow 0, \quad \hbox{ for }
n\to\infty.
\end{equation*}
Thus, assumption \eqref{theoremLP-assw} of Theorem~\ref{theoremLP} is satisfied implying that $f$ is constant. The proof of the theorem is, therefore, completed.
\end{proof}

The Liouville theorem can also be obtain from the Harnack inequality on balls, see Lemma~\ref{Harnack-on-balls}.
Below we assume that $\mu$ is bounded, restricting the set of admissible measures, but on the other hand we require harmonic function to be bounded from below only. Namely, the following result holds.
\begin{theorem}\label{Liouv-Harn}
 Let $X$ be a geodesic metric measure space with doubling measure $\mu$. Then, every bounded from below harmonic function in $\harm(X, \mu)$ is constant.
\end{theorem}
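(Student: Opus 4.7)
The plan is to leverage the scale-invariant Harnack inequality on balls (Lemma~\ref{Harnack-on-balls}) together with properness of $X$ to conclude that a harmonic function which is bounded from below on the whole space must equal its infimum everywhere.

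First I would normalize the problem. Set $m:=\inf_X f$, which is finite by the hypothesis that $f$ is bounded from below, and let $g:=f-m$. Then $g\geq 0$ and by Proposition~\ref{further-prop}(\ref{further-prop-1}) we have $g\in\harm(X,\mu)$; moreover $\inf_X g=0$ by construction. It suffices to show $g\equiv 0$ on $X$.

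Next I would fix an arbitrary point $y\in X$, and for each $\varepsilon>0$ choose $x_\varepsilon\in X$ such that $g(x_\varepsilon)<\varepsilon$ (possible because $\inf_X g=0$). Select any radius $r>d(x_\varepsilon,y)$; since $X$ is proper, $\overline{B(x_\varepsilon,6r)}$ is automatically compact and contained in $X$, so $B(x_\varepsilon,6r)\Subset X$. Setting $B:=B(x_\varepsilon,r)$, both $x_\varepsilon$ and $y$ lie in $B$, and Lemma~\ref{Harnack-on-balls} applied to the nonnegative harmonic function $g$ yields
\[
  g(y)\leq \sup_{B} g\leq C_\mu^3\inf_{B} g\leq C_\mu^3 g(x_\varepsilon)<C_\mu^3\,\varepsilon.
\]
Letting $\varepsilon\to 0$ gives $g(y)=0$, and since $y\in X$ was arbitrary, $g\equiv 0$ on $X$, so $f\equiv m$.

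The crucial feature driving the argument is that the Harnack constant $C_\mu^3$ in Lemma~\ref{Harnack-on-balls} depends only on the doubling constant of $\mu$, not on the ball's radius; this scale invariance is what allows us to propagate information about the infimum over arbitrarily large distances. The geodesic hypothesis is not directly needed in this short argument, but it sits naturally in the hypotheses since, via Proposition~\ref{prop-dbl}, it guarantees metric continuity of $\mu$ and hence continuity of $f$ (Proposition~\ref{harm-cont}), so pointwise evaluation such as $g(x_\varepsilon)$ is meaningful. There is no substantive obstacle — the only verification beyond the reduction step is the trivial inclusion $B(x_\varepsilon,6r)\Subset X$, which is automatic from properness.
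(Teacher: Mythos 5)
Your proposal is correct and follows essentially the same route as the paper: reduce to the nonnegative harmonic function $g=f-\inf_X f$ via Proposition~\ref{further-prop}(\ref{further-prop-1}) and exploit the radius-independence of the Harnack constant $C_\mu^3$ in Lemma~\ref{Harnack-on-balls} on arbitrarily large balls. Your version merely makes explicit, via the near-minimizers $x_\varepsilon$, the paper's step that $\inf_{B(x,r)}g\to 0$ as $r\to\infty$.
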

\begin{proof}
 Let $f\in \harm(X, \mu)$ and define $g=f-\inf_{X} f\geq 0$. By Proposition~\ref{further-prop}(\ref{further-prop-1}) we have that $g\in\harm(X, \mu)$. By the Harnack inequality, see Lemma~\ref{Harnack-on-balls}, we have that for all $x\in X$ and any ball $B(x,r)\in X$
 \[
  g(x)\leq \sup_{B(x,r)} g \leq C_{\mu}^3 \inf_{B(x,r)} g \to 0,\quad \hbox{ as } r\to \infty.
\]
Hence, $g\equiv 0$ and, in turn, $f$ is constant.
\end{proof}

In the setting of weakly harmonic functions the same type of argument cannot be applied. Indeed, if $r^B_M\to \infty$, then the Harnack constant $C_H$ in Lemma~\ref{Harnack-on-balls} grows unbounded.

\end{document}